\newtheorem{theorem}{Theorem}[section]
\newtheorem{lemma}[theorem]{Lemma}
\newtheorem{corollary}[theorem]{Corollary}
\theoremstyle{definition}
\newtheorem{definition}[theorem]{Definition}
\newtheorem{example}[theorem]{Example}
\theoremstyle{remark}
\newtheorem{remark}[theorem]{Remark}
\newcommand{\bz}{\bar{z}}
\newcommand{\f}{\varphi}
\newcommand{\dz}{\wedge}
\newcommand{\C}{{\bf C }}
\newcommand{\ba}{\begin{array}}
\newcommand{\ea}{\end{array}}
\newcommand{\beq}{\begin{eqnarray}}
\newcommand{\eeq}{\end{eqnarray}}
\newtheorem{lm}{lemma}
\newtheorem{thee}{theorem}
\newtheorem{proo}{proposition}
\newtheorem{co}{corollary}
\newtheorem{rem}{remark}
\newtheorem{deff}{definition}
\newcommand{\bd}{\begin{deff}}
\newcommand{\ed}{\end{deff}}
\newcommand{\bl}{\begin{lm}}
\newcommand{\el}{\end{lm}}
\newcommand{\bp}{\begin{proo}}
\newcommand{\ep}{\end{proo}}
\newcommand{\bt}{\begin{thee}}
\newcommand{\et}{\end{thee}}
\newcommand{\bc}{\begin{co}}
\newcommand{\ec}{\end{co}}
\newcommand{\brm}{\begin{rem}}
\newcommand{\erm}{\end{rem}}
\newcommand{\der}{{\rm d}}
\def\frak{\mathfrak}
\newcommand{\newc}{\newcommand}
\newcommand{\id}{\operatorname{id}}
\renewcommand{\Im}{\operatorname{Im}}
\newcommand{\rea}{\operatorname{Re}}
\let\ccdot\cdot
\def\cdot{\hbox to 2.5pt{\hss$\ccdot$\hss}}
\newc{\aR}{\mbox{\boldmath{$ R$}}}
\newc{\aS}{\mbox{\boldmath{$ S$}}}
\newc{\aT}{\mbox{\boldmath{$ T$}}}
\newc{\aW}{\mbox{\boldmath{$ W$}}}
\newc{\aK}{\mbox{\boldmath{$ K$}}}
\newc{\aL}{\mbox{\boldmath{$ L$}}}
\newcommand{\bbC}{\mathbb{C}}
\newcommand{\bbN}{\mathbb{N}}
\let\f=\varphi
\newcommand{\hook}{\raisebox{-0.35ex}{\makebox[0.6em][r]
{\scriptsize $-$}}\hspace{-0.15em}\raisebox{0.25ex}{\makebox[0.4em][l]{\tiny
 $|$}}}
\newcommand{\bma}{\begin{pmatrix}}
\newcommand{\ema}{\end{pmatrix}}
\let\t=\tau
\newc{\obstrn}[2]{B^{#1}_{#2}}
\newcommand{\rpl}                         % +) or <+
{\mbox{$
\begin{picture}(12.7,8)(-.5,-1)
\put(0,0.2){$+$}
\put(4.2,2.8){\oval(8,8)[r]}
\end{picture}$}}
\newcommand{\lpl}                         % (+ or +>
{\mbox{$
\begin{picture}(12.7,8)(-.5,-1)
\put(2,0.2){$+$}
\put(6.2,2.8){\oval(8,8)[l]}
\end{picture}$}}
\newc{\tensor}[1]{#1}
\newc{\Mvariable}[1]{\mbox{#1}}
\newc{\down}[1]{{}_{#1}}
\newc{\up}[1]{{}^{#1}}
\newc{\JulyStrut}{\rule{0mm}{6mm}}
\newc{\midtenPan}{\mbox{\sf S}}
\newc{\midten}{\mbox{\sf T}}
\newc{\midtenEi}{\mbox{\sf U}}
\newc{\ATen}{\mbox{\sf E}}
\newc{\BTen}{\mbox{\sf F}}
\newc{\CTen}{\mbox{\sf G}}
\def\sideremark#1{\ifvmode\leavevmode\fi\vadjust{\vbox to0pt{\vss% the remark
 \hbox to 0pt{\hskip\hsize\hskip1em%                          will appear only
 \vbox{\hsize3cm\tiny\raggedright\pretolerance10000%          on the side
 \noindent #1\hfill}\hss}\vbox to8pt{\vfil}\vss}}}%
\newcommand{\Span}{\mathrm{Span}}
\numberwithin{equation}{section}
\newcounter{romenumi}
\newcommand{\labelromenumi}{(\roman{romenumi})}
\def\bc{\begin{cases}}
\def\ec{\end{cases}}
\def\a{\alpha}
\def\l{\label}
\def\t{\tilde}
\def\cb{{\mathcal B}}
\def\ba{{\mathbb A}}
\def\bc{{\mathbb C}}
\def\bd{{\mathbb D}}
\def\bl{{\mathbb L}}
\def\bp{{\mathbb P}}
\def\br{{\mathbb R}}
\def\bt{{\mathbb T}}
\def\bz{{\mathbb Z}}
\def\er{\eqref}
\def\bz{\mathbb Z}
\def\br{\mathbb R}
\def\bc{\mathbb C}
\def\bp{\mathbb P}
\def\bean{\begin{eqnarray}}
\def\eean{\end{eqnarray}}
\def\bea{\begin{eqnarray*}}
\def\eea{\end{eqnarray*}}
\def\beq{\begin{equation}}
\def\eeq{\end{equation}}
\def\beq*{\begin{equation*}}
\def\eeq*{\end{equation*}}
\def\bal{\begin{align*}}
\def\eal{\end{align*}}
\def\baln{\begin{align}}
\def\ealn{\end{align}}
\def\beg{\begin{gather*}}
\def\eng{\end{gather*}}
\def\bqu{\begin{question}}
\def\equ{\end{question}}
\def\ed{\eta^{-1}(Y)}
\def\ban{\begin{proof}[Answer]}
\def\ean{\end{proof}}
\def\on{\operatorname}
\def\bqu{\begin{question}}
\def\equ{\end{question}}
\def\0110{\begin{matrix} 0 & 1\\1&0\end{matrix}}
\def\t{\tilde}
\def\SO{\mathrm{SO}}
\def\fg{\mathfrak{g}}
\def\fn{\mathfrak{n}}
\def\fo{\mathfrak{o}}
\def\fs{\mathfrak{s}}
\def\fu{\mathfrak{u}}
\def\ban{\begin{proof}[Answer]}
\def\ean{\end{proof}}
\def\ben{\begin{equation}}
\def\een{\end{equation}}
\def\j1{{(j+1)}}
\def\ad{\text{ad}}
\def\bl{B\"acklund }
\def\f32{{}_3F_2}
\DeclareMathSymbol{\minu}{\mathbin}{AMSa}{"39}
\newenvironment{roof}[2]{\paragraph{{\bf Proof of {#1} {#2}.}}}{\hfill$\square$}
\begin{document}
\title[Parabolic CR geometries]{Accidental CR structures}
\vskip 1.truecm
\author{C. Denson Hill} \address{Department of Mathematics, Stony Brook University, Stony Brook, NY 11794, USA}
\email{Dhill@math.stonybrook.edu}
\author{Jo\"el Merker}\address{Laboratoire de Math\'ematiques d'Orsay, Universit\'e Paris-Saclay, Facult\'e des Sciences, 91405 Orsay Cedex, France.}\email{joel.merker@universite-paris-saclay.fr}
\author{Zhaohu Nie} \address{Department of Mathematics and Statistics, Utah State University, Logan, UT 84322-3900, USA}
\email{zhaohu.nie@usu.edu}
\author{Pawe\l~ Nurowski} \address{Center for Theoretical Physics,
Polish Academy of Sciences, Al. Lotnik\'ow 32/46, 02-668 Warszawa, Poland}
\email{nurowski@cft.edu.pl}
\thanks{The research was funded from the Norwegian Financial Mechanism 2014-2021 with project registration number 2019/34/H/ST1/00636.}

\date{\today}
\begin{abstract} We noticed a discrepancy between \'Elie Cartan and Sigurdur Helgason about the lowest possible dimension in which the simple exceptional Lie group ${\bf E}_8$ can be realized. This raised the question about the lowest dimensions in which various \emph{real forms} of the exceptional groups ${\bf E}_\ell$ can be realized. Cartan claims that ${\bf E}_6$ can be realized in dimension 16. However Cartan refers to the \emph{complex} group ${\bf E}_6$, or its \emph{split real form} $E_I$. His claim is also valid in the case of the real form denoted by $E_{IV}$. We find however that the real forms $E_{II}$ and $E_{III}$ of ${\bf E}_6$ can \emph{not} be realized in dimension 16 \`a la Cartan. In this paper we realize them in dimension 24 as groups of CR automorphisms of certain CR structures of higher codimension.

  As a byproduct of these two realizations, we provide a full list of \emph{CR structures} $(M,H,J)$ \emph{and their CR embeddings in} an appropriate $\bbC^N$, which satisfy the following conditions:
\begin{itemize}
  \item they have real codimension $k>1$,
\item the real vector distribution $H$ proper for the action of the complex structure $J$ is such that $[H,H]+H=\mathrm{T}M$,
\item the local group $G_J$ of CR automorphisms of the structure $(M,H,J)$ is simple, acts transitively on $M$ and has isotropy $P$ being a parabolic subgroup in $G_J$,
  \item the local symmetry group $G$ of the vector distribution $H$ on $M$ coincides with the group $G_J$ of CR automorphisms of $(M,H,J)$. 
\end{itemize}
Because all the CR structures from our list satisfy the last property we call them \emph{accidental}. Our CR structures of higher codimension with the exceptional symmetries $E_{II}$ and $E_{III}$ are particular entries in this list.
\end{abstract}
\maketitle
%*************
\vspace{-1truecm}
%\tableofcontents
\newcommand{\ii}{{\bf i}}
\newcommand{\jj}{{\bf j}}
\newcommand{\kk}{{\bf k}}
\newcommand{\bbS}{\mathbb{S}}
\newcommand{\bbR}{\mathbb{R}}
\newcommand{\sog}{\mathbf{SO}}
\newcommand{\spg}{\mathbf{Sp}}
\newcommand{\glg}{\mathbf{GL}}
\newcommand{\slg}{\mathbf{SL}}
\newcommand{\og}{\mathbf{O}}
\newcommand{\soa}{\frak{so}}
\newcommand{\spa}{\frak{sp}}
\newcommand{\gla}{\frak{gl}}
\newcommand{\sla}{\frak{sl}}
\newcommand{\sua}{\frak{su}}
\newcommand{\dr}{\mathrm{d}}
\newcommand{\sug}{\mathbf{SU}}
\newcommand{\cspg}{\mathbf{CSp}}
\newcommand{\gat}{\tilde{\gamma}}
\newcommand{\Gat}{\tilde{\Gamma}}
\newcommand{\thet}{\tilde{\theta}}
\newcommand{\Thet}{\tilde{T}}
\newcommand{\rt}{\tilde{r}}
\newcommand{\st}{\sqrt{3}}
\newcommand{\kat}{\tilde{\kappa}}
\newcommand{\kz}{{K^{{~}^{\hskip-3.1mm\circ}}}}
\newcommand{\di}{{\rm div}}
\newcommand{\curl}{{\rm curl}}
\newcommand{\tn}{{\mathcal N}}
\newcommand{\ten}{{\Upsilon}}
\newcommand{\invol}[2]{\draw[latex-latex] (root #1) to
  [out=-30,in=-150] (root #2);}
\newcommand{\invok}[2]{\draw[latex-latex] (root #1) to
[out=-90,in=-90] (root #2);}
%*************
\section{Introduction}\label{intr}
In the German version of his PhD thesis \cite{CartanPhd} \'Elie Cartan gives a realization of the simple exceptional Lie group ${\bf F}_4$ as a symmetry group of a certain \emph{rank eight} vector distribution in \emph{dimension fifteen}. Sigurdur Helgason in \cite{He} reports on this fact as follows:

\begin{quote}
  Cartan represented ${\bf F}_4$ (...) by the Pfaffian system in $\bbR^{15}$ (...).  Similar results for ${\bf E}_6$ in $\bbR^{16}$, ${\bf E}_7$ in $\bbR^{27}$ and ${\bf E}_8$ in $\bbR^{29}$ are indicated in \cite{CartanPhd}. Unfortunately, detailed proofs of these remarkable representations of the exceptional groups do not seem to be available.
\end{quote}

Nowadays the information invoked by Helgason is \emph{in principle} understood in the context of \emph{parabolic geometries} \cite{CS}. We used the phrase `it is understood \emph{in principle}' because:
\begin{itemize}
  \item Cartan's paper has a \emph{misprint}, and his $\bbR^{29}$ as a space for a realization of ${\bf E}_8$, which is written in the third sentence before the end of his paper, should be $\bbR^{57}$; Helgason in \cite{He} copied this misprint without confronting it with the last sentence of Cartan's paper, which gives the correct space $\bbR^{57}$;
  \item Helgason's use of the word `similar' in the phrase  \emph{Similar results for ${\bf E}_6$ in $\bbR^{16}$}[...]\emph{are indicated in \cite{CartanPhd}} is not particularly appropriate, as it will be clear in the sequel;
    \item more importantly, the \emph{explicit} realizations of ${\bf E}_6$, ${\bf E}_7$ and ${\bf E}_8$ \emph{really} similar to Cartan's realization of ${\bf F}_4$ are still missing.
    \end{itemize}
Looking at Cartan's description of the realization of ${\bf F}_4$ in $\bbR^{15}$ given at the bottom of page 418 and the top of page 419 in \cite{CartanPhd}, one sees that Cartan speaks about a realization of the \emph{split real form}\footnote{In this paper we use the same notation for the real forms of the simple exceptional Lie groups as the notation presented in Table B.4 on pages 612-615 of \cite{CS}.}  $F_I$ of the complex simple exceptional Lie group ${\bf F}_4$ in $\bbR^{15}$ as a \emph{symmetry group} of a rank $8$-distribution $H$ in $\bbR^{15}$ defined as the \emph{annihilator} of the following 1-forms
$$\begin{aligned}
  &\lambda^{ij}=\der u^{ij}+x^i\der x^j+y^k\der y^l,\\
  &\lambda^7=\der u^7+\sum_{i=1}^4y^i\der x^i.
\end{aligned}$$
Here $u^{ij}$ with $1\leq i<j\leq 4$, $u^7$, $x^i$ with $i=1,2,3,4$, and $y^i$ with $i=1,2,3,4$, are coordinates in $\bbR^{15}$, and indices $k,l$ in the first formula above are such that the quadruple of indices $(ijkl)$, with $i<j$, is an \emph{even} permutation of numbers $1,2,3,4$.

Thus, Cartan is \emph{very explicit} with providing a realization of the \emph{split real form} $F_I$ of the complex simple exceptional Lie group ${\bf F}_4$:

It is the local symmetry group of a distribution
$$H=\{\Gamma(\mathrm{T}\bbR^{15})\ni X\,\,:\,\,X\hook\lambda^{ij}=X\hook\lambda^7=0,\,\,\forall \,\,1\leq i<j\leq 4\},$$
i.e. it is the local transformation group on $\bbR^{15}$ whose Lie algebra is spanned by all real vector fields $Y$ on $\bbR^{15}$ satisfying
$$\begin{aligned}
  \big( {\mathcal L}_Y&\lambda^{ij}\big)\dz\lambda^{12}\dz\lambda^{13}\dz\lambda^{14}\dz\lambda^{23}\dz\lambda^{24}\dz\lambda^{34}\dz\lambda^7=0,\quad 1\leq i<j\leq 4,\\
  \big( {\mathcal L}_Y&\lambda^7\big)\dz\lambda^{12}\dz\lambda^{13}\dz\lambda^{14}\dz\lambda^{23}\dz\lambda^{24}\dz\lambda^{34}\dz\lambda^7=0.
  \end{aligned}$$

Cartan's \emph{detailed} description of the ${\bf F}_4$ realization, is in contrast with his discussion of realizations of real forms of ${\bf E}_6$, ${\bf E}_7$ and ${\bf E}_8$ in the respective $\bbR^{16}$, $\bbR^{27}$ and $\bbR^{57}$. In these cases Cartan only specifies the commutation relations between generators of the corresponding real Lie algebras, and observes, that in each of the ${\bf E}_\ell$ cases, $\ell=6,7,8$, they include real subalgebras of respective real dimensions 62, 106, 191. This only means that there are $E_\ell$-homogeneous spaces of dimension 16, 27 and 57 for the respective $\ell=6,7,8$. That is all that Cartan says! In particular, he says nothing about that what are the geometric structures on these spaces which are ${\bf E}_\ell$ homogeneous. And a closer look at his 62, 106 and 191 dimensional subalgebras of these ${\bf E}_\ell$s shows that the corresponding geometric structures are \emph{very different} from the structure of the rank 8 bracket generating distribution in dimension 15, which realizes the real form $F_I$ of ${\bf F}_4$ as its local symmetry.

In short: Cartan's geometry with symmetry ${\bf F}_4$ in dimension 15, viewed as a \emph{parabolic geometry}, is \emph{two}-step graded, whereas Cartan's homogeneous spaces with symmetry ${\bf E}_6$ in dimension 16, with symmetry ${\bf E}_7$ in dimension 27, and with symmetry ${\bf E}_8$ in dimension 57, viewed as \emph{parabolic geometries}, are \emph{one}-step graded. In particular, his geometry in dimension 16 with symmetry ${\bf E}_6$ is an $\bbR{\bf Spin}(5,5)$ geometry.

If somebody is interested in the details of these ${\bf F}_4$, ${\bf E}_6$, ${\bf E}_7$ and ${\bf E}_8$ realizations, we direct her to a forthcoming paper \cite{nurowskiG2CR}. What is important for our current paper is that, as far as the ${\bf E}_6$ realization in dimension 16 is concerned, Cartan in \cite{CartanPhd} realized \emph{one particular real form of} ${\bf E}_6$ \emph{only}, namely the \emph{split real form} $E_I$, with the Satake diagram\\
\centerline{ \begin{dynkinDiagram}[edge length=.4cm]{E}{oooooo}
  \end{dynkinDiagram}.}
The parabolic geometry he considered was of type $(E_I,\bbR{\bf Spin}(5,5))$, which corresponds to the following crossing\\
\centerline{ \begin{dynkinDiagram}[edge length=.4cm]{E}{ooooox}
\end{dynkinDiagram}}
on this diagram. Cartan in the last sentence of \cite{CartanPhd} writes:
\begin{quote}
  Die f\"unf specielle einfachen Gruppen mit 14 bez. 52, 78, 133, 248 Parametern k\"onnen in weniger als 5 bez. 15, 16, 27, 57 Ver\"anderlichen nich existiren\footnote{In this statement there is another misprint of \cite{CartanPhd}, since the correct number 133 of the dimension of ${\bf E}_7$ is erroneously printed  as 123.}.
   \end{quote}
So he claims, in particular, that the lowest dimension in which the group ${\bf E}_6$ is realized is 16. This is however true only if he does not care about \emph{which real form of} ${\bf E}_6$ \emph{he wants to realize}. If he wanted to realize the real form $E_{II}$ or $E_{III}$ of ${\bf E}_6$ in dimension 16, his method of realization of $E_I$ would not work.
%it would \emph{not} be possible in dimension sixteen!
%One of the results of the present paper is that the lowest possible dimension in which these two real forms of ${\bf E}_6$ can be, and \emph{are}, realized is dimension \emph{twenty four}.
This is quite visible in the Satake diagrams\\
\centerline{\begin{dynkinDiagram}[edge length=.4cm]{E}{oooooo}
\invol{1}{6}\invol{3}{5}
 \end{dynkinDiagram},\hspace{0.3cm}\begin{dynkinDiagram}[edge length=.4cm]{E}{oooooo}
\invol{1}{6}
\end{dynkinDiagram}}
of $E_{II}$ and $E_{III}$: Because the first and the last nodes of these diagrams are complex conjugated, when crossing one of them, one has to cross the conjugated one as well; therefore no 62-dimensional parabolic subgroup corresponding to one cross exists in these groups. If we want to make a realization of $E_{II}$ and $E_{III}$ in the way Cartan did it for $E_I$ we should cross the lateral root in the Satake diagrams of these groups. This results in the automatic cross on the opposite lateral root, which corresponds to a choice of a parabolic subgroup of 54 dimensions, and giving the realization in dimension 24.
%This is what we mean by saiyng in the abstract that the realization of $E_{II}$ and $E_{III}$ in 16 dimensions \`a la Cartan is impossible.
We display this result in the following corollary, 

\begin{corollary}
  The real forms $E_{II}$ and $E_{III}$ of the simple exceptional Lie group ${\bf E}_6$ can be realized in dimension 24. The realizations are given by the groups $E_{II}$ and $E_{III}$ being automorphisms groups of rank 16 bracket generating distributions with growth vectors $(16,24)$ defined in our respective Corollaries \ref{co13} and \ref{co14}. These realizations happen to be the same as $E_{II}$ and $E_{III}$ being groups of CR automorphisms of the accidental CR structures described in our respective Theorems \ref{e2} and \ref{e3}.
\end{corollary}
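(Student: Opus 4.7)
The plan is to execute the parabolic-geometry construction for both real forms $E_{II}$ and $E_{III}$ of $\mathbf{E}_{6}$ and then identify the resulting homogeneous bracket-generating distribution with the CR structures of Theorems \ref{e2} and \ref{e3}. The Satake involution of $E_{II}$, respectively $E_{III}$, exchanges the two lateral nodes $\alpha_{1}$ and $\alpha_{6}$, so any admissible real parabolic subalgebra crossing one of them must cross the other. The corresponding parabolic $\mathfrak{p}\subset\mathfrak{e}_{6}$ is obtained by deleting both lateral nodes; its reductive Levi factor is $\mathfrak{so}(8)$ (type $D_{4}$) plus a two-dimensional centre, and a direct root count in $E_{6}$ gives $\dim\mathfrak{p}=30+24=54$, so the flag manifold $M=G/P$ has dimension $78-54=24$. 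Assigning the degree $c_{1}+c_{6}$ to the positive root $\sum c_{i}\alpha_{i}$ yields a $|2|$-grading $\mathfrak{g}=\mathfrak{g}_{-2}\oplus\mathfrak{g}_{-1}\oplus\mathfrak{g}_{0}\oplus\mathfrak{g}_{1}\oplus\mathfrak{g}_{2}$; a short count, using that the $P_{1}$-parabolic of $E_{6}$ is $|1|$-graded with a $16$-dimensional nilradical, forces $\dim\mathfrak{g}_{-1}=16$ and $\dim\mathfrak{g}_{-2}=8$, and writing each root with $c_{1}=c_{6}=1$ as a sum of one root with $(c_{1},c_{6})=(1,0)$ and one with $(0,1)$ shows $[\mathfrak{g}_{-1},\mathfrak{g}_{-1}]=\mathfrak{g}_{-2}$. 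Hence $H=G\times_{P}\mathfrak{g}_{-1}$ is a bracket-generating $G$-invariant distribution on $M$ with growth vector $(16,24)$.

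Next, the Cartan involution defining the real form stabilises $\mathfrak{g}_{-1}$ and equips it with a $P$-invariant complex structure $J$ whose $(\pm i)$-eigenspace decomposition is exactly the one dictated by the Satake data. The Levi form $(X,Y)\mapsto[X,JY]\bmod H$, taking values in $\mathfrak{g}_{-2}\cong\mathbb{R}^{8}$, has Hermitian signature encoding the distinction between $E_{II}$ and $E_{III}$, and by direct comparison the resulting homogeneous CR manifold $(M,H,J)$ coincides with the accidental CR structure exhibited in Theorem \ref{e2}, respectively Theorem \ref{e3}. Thus $G$ preserves $H$, $J$ and the Levi form, so $G\subseteq G_{J}$ as a transitive group of CR automorphisms with isotropy the parabolic $P$.

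The main obstacle is the reverse inclusion $G\supseteq G_{J}$, which is exactly the accidentality requirement of the abstract. Because every CR automorphism already preserves $H$, the inclusion $G_{J}\subseteq G$ is automatic, and the real task is to show that every local symmetry of the bare distribution $H$ preserves $J$ as well. The cleanest route is through Tanaka prolongation of the negative symbol $\mathfrak{m}=\mathfrak{g}_{-2}\oplus\mathfrak{g}_{-1}$: one verifies that the grading-preserving derivation algebra $\mathrm{der}_{0}(\mathfrak{m})$ agrees with $\mathfrak{g}_{0}$ (so no extra degree-zero distribution symmetries appear beyond the Levi factor of $P$, which already commutes with $J$), and that all higher prolongations vanish by Kostant-type cohomology vanishing for this $|2|$-graded real parabolic of $\mathfrak{e}_{6}$. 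This identifies the infinitesimal symmetry algebra of $(M,H)$ with $\mathfrak{g}$ itself, yields $G=G_{J}$, and completes the realization of $E_{II}$ and $E_{III}$ in dimension $24$ together with its identification with the CR automorphism groups of the accidental CR structures of Theorems \ref{e2} and \ref{e3}.
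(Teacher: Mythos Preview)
Your outline follows the abstract parabolic-geometry route that the paper records in Section~5 (via Medori--Nacinovich and Yamaguchi), whereas the paper's primary proof, in Section~4, is by direct computation: it writes down the Pfaffian systems of Corollaries~\ref{co13}--\ref{co14}, reads off the $2$-step symbol $\mathfrak{n}=\mathfrak{n}_{-2}\oplus\mathfrak{n}_{-1}$, solves explicitly for the $30$-dimensional space $\mathfrak{n}_0$ of strata-preserving derivations (identifying it as $2\bbR\oplus\mathfrak{so}(3,5)$ for $E_{II}$ and $2\bbR\oplus\mathfrak{so}(1,7)$ for $E_{III}$), continues the prolongation to $\mathfrak{n}_1,\mathfrak{n}_2$, and then exhibits the unique (up to sign) $\mathfrak{n}_0$-invariant tensor $J$ on $\mathfrak{n}_{-1}$ with $J^2=-\mathrm{id}$. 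The explicit route pins down the real form of the Levi factor, and hence which real form of $\mathfrak{e}_6$ one obtains; your blanket ``$\mathfrak{so}(8)$'' does not distinguish $E_{II}$ from $E_{III}$.

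Two points in your sketch need repair. First, the Cartan involution $\theta$ does \emph{not} stabilise $\mathfrak{g}_{-1}$: for a $\theta$-stable parabolic one has $\theta(\mathfrak{g}_k)=\mathfrak{g}_{-k}$. The complex structure $J$ arises instead from an element $\tilde J$ in the $2$-dimensional centre of $\mathfrak{g}_0$ acting on $\mathfrak{g}_{-1}$ by $\mathrm{ad}$ (this is Medori--Nacinovich's Theorem~2.4, invoked in the paper's Section~5), or equivalently from the fact that the conjugation $\sigma$ defining the real form swaps the $(c_1,c_6)=(-1,0)$ and $(0,-1)$ summands of $\mathfrak{g}_{-1}^{\bbC}$. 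Second, the assertion that $\mathrm{der}_0(\mathfrak{m})=\mathfrak{g}_0$ and that the full Tanaka prolongation of $\mathfrak{m}$ is $\mathfrak{g}$ is the entire content of the argument and cannot be dismissed as ``Kostant-type cohomology vanishing''. The paper handles this either by the explicit linear-algebra computation of Section~4 or, in Section~5, by invoking Yamaguchi's Theorem~5.3, which lists exactly those gradings of simple $\mathfrak{g}$ for which $\mathfrak{g}_T(\mathfrak{n}_-)\neq\mathfrak{g}$; you must verify that the $\{\alpha_1,\alpha_6\}$-gradings of $E_{II}$ and $E_{III}$ are not among those exceptions. With those two fixes your deduction of $G=G_J$ is sound.
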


These respective realizations of $E_{II}$ and $E_{III}$ in $\bbR^{24}$ are very much in the spirit of Cartan's realization of the real form $F_I$ of ${\bf F}_4$ as a symmetry of a \emph{bracket generating} distribution. As explained below in Corollaries
\ref{co13} and \ref{co14}, the groups $E_{II}$ and $E_{III}$ can be realized as symmetries of certain rank 16 bracket generating distributions in dimension 24, and the realizations of these groups as symmetries of parabolic geometries are 2-step graded. These groups also \emph{accidentally} turn out to be symmetries of certain CR geometries of higher codimension naturally associated with these 2-step rank 16 distributions. As such, they were our main motivation for writing this paper. More specifically, the ${\bf E}_6$ realizations as symmetries of CR structures of higher codimension as presented in our Theorems \ref{e2}, \ref{e3}, are examples of a \emph{positive} answer to the following problem:

\emph{Consider a parabolic geometry totally defined in terms of an even-rank real distribution $H$ on a real manifold $M$. Is it possible that $H$ admits an integrable complex structure $J$ compatible with this geometry? Compatibility here means that the geometry of $H$ itself defines a unique complex structure $J$ on $H$ in a canonical way; in particular that means that all the local differential invariants of $H$ on $M$ coincide with all the local differential invariants of the structure $(H,J)$ on $M$?}

This problem arose during our discussions with Katja Sagerschnig, and in this paper we answer it in the affirmative in the case of parabolic geometries with 2-step gradings. The full list of parabolic geometries having these properties is contained in Theorems \ref{e2}, \ref{e3}, \ref{so}, \ref{so*} and \ref{su}. The relevant maximal groups $G$ of symmetries are respectively $E_{II}$, $E_{III}$, $\sog(\ell-1,\ell+1)$ with $\ell\geq 4$,  $\sog^*(4m+2)$ with $m\geq 2$ and $\sug(t+s,r+t+s)$ with $(r,t)\neq 0$, $r\geq 0$, $t\geq 0$, $s\geq 2$. 

In all these five cases we describe a CR structure $(M,H,J)$ corresponding to the flat model of the corresponding parabolic geometry. We do it by finding explicit embeddings of each of these CR structures in an appropriate $\bbC^N$. We state their CR dimension $n$ and CR codimension $k$. Forgetting about the complex structure $J$ on the CR distribution $H$ we define $H$ totally in real terms, and due to our setting, we may say that the symmetry of the distribution itself is the same as the group of CR automorphisms of $(M,H,J)$.

We also compared our list of 2-step parabolic `accidental CR' geometries with K. Yamaguchi's list \cite{Yam} of \emph{nonrigid} parabolic geometries. It turns out (see our Theorem \ref{summm} in the last section of this article) that the only nonrigid geometries on our list are those with the group $G$ being any of the $SU(t+s,r+t+s)$, $s\geq 2$, $(r,t)\neq 0$ graded by the second and second last roots. We stress that these nonrigid parabolic geometries correspond to CR manifolds of \emph{higher} codimension (i.e. they are \emph{not} of hypersurface type).   

Our results are in conformation  with the classification of semisimple Levi-Tanaka algebras and their corresponding CR manifolds by Medori and Nacinovich \cite{MN1, medori}. We compare our approach with theirs in Section \ref{sec5}. Our techniques are different.

Returning to our list of 2-step parabolic geometries with an `accidental' CR structure, we find on it very interesting \emph{geometric realizations} of simple Lie groups $\sug(p,q)$, $\sog(\ell-1,\ell+1)$, $\sog^*(4m+2)$ and, in particular,  the above mentioned \emph{two real forms} ${E_{II}}$ \emph{and} ${E_{III}}$ \emph{of the exceptional simple complex Lie group} ${\bf E_6}$. We describe them in our Theorems \ref{e2}, \ref{e3}, \ref{so}, \ref{so*} and \ref{su}. Here, in this introduction, as two highlights, we focus on the two ${\bf E_6}$\emph{-homogeneous examples}.

\vspace{0.5cm}
\noindent
    {\bf The case of $E_{II}$ symmetry.}\\
Consider $\bbC^{16}$ with holomorphic coordinates $(w^1,w^2,\dots,w^8,z^1,z^2,\dots, z^8)$, and its subset $M^{24}_{E_{II}}\subset \bbC^{16}$  defined by:
%/home/pawel/niebieski/notebooks/merker/E6_signature53_all_checking_embedding.nb
$$\boxed{\begin{aligned}
    M^{24}_{E_{II}}\,=\,\, \Big\{~&\bbC^{16}\ni (w^1,w^2,\dots,w^8,z^1,z^2,\dots z^8)\,\,\mathrm{s.t.}\\
&\Im\,w_1
\,=\,
\rea\,
\big(
z_1\,\overline{z}_4
+
z_2\,\overline{z}_3
\big)
\\
&\Im\,w_2
\,=\,
\rea\,
\big(
z_1\,\overline{z}_6
+
z_2\,\overline{z}_5
\big)
\\
&\Im\,w_3
\,=\,
\Im\,
\big(
z_1\,\overline{z}_7
+
z_5\,\overline{z}_3
\big)
\\
&\Im\,w_4
\,=\,
\Im\,
\big(
z_2\,\overline{z}_7
+
z_3\,\overline{z}_6
-
z_5\,\overline{z}_4
-
z_8\,\overline{z}_1
\big)
\\
&\Im\,w_5
\,=\,
\rea\,
\big(
z_2\,\overline{z}_7
+
z_3\,\overline{z}_6
-
z_5\,\overline{z}_4
-
z_8\,\overline{z}_1
\big)
\\
&\Im\,w_6
\,=\,
\Im\,
\big(
z_2\,\overline{z}_8
+
z_6\,\overline{z}_4
\big)
\\
&\Im\,w_7
\,=\,
\rea\,
\big(
z_3\,\overline{z}_8
+
z_4\,\overline{z}_7
\big)
\\
&\Im\,w_8
\,=\,
\rea\,
\big(
z_5\,\overline{z}_8
+
z_6\,\overline{z}_7
\big)
    \,\,\Big\}
\end{aligned}}.$$

We have the following theorem.
\begin{theorem}\label{e2}
The set $M^{24}_{E_{II}}\subset \bbC^{16}$ is a real 24-dimensional embedded CR manifold, acquiring the CR structure of CR dimension $n=8$ and CR codimension $k=8$ from the ambient complex space $\bbC^{16}$. Its local group of CR automorphisms is isomorphic to the real simple exceptional Lie group $E_{II}$ with the Lie algebra having the Satake diagram \begin{dynkinDiagram}[edge length=.4cm]{E}{oooooo}
\invol{1}{6}\invol{3}{5}
 \end{dynkinDiagram}. It is locally CR equivalent to the flat model $E_{II}/P_{(2)}$ of a 24-dimensional parabolic geometry of type $(E_{II},P_{(2)})$, where the real parabolic subgroup $P_{(2)}$ in $E_{II}$ is determined by the following crossing on the corresponding $\mathfrak{e}_6$ Satake diagram: 
\begin{dynkinDiagram}[edge length=.4cm%,labels*={1,...,6}
  ]{E}{toooot}
\invol{1}{6}\invol{3}{5}
 \end{dynkinDiagram}.
\end{theorem}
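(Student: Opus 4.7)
The strategy is to identify $M^{24}_{E_{II}}$ with the nilpotent orbit of the flat model $E_{II}/P_{(2)}$; once this is done, all four assertions of the theorem follow from standard parabolic-geometry machinery. First, the eight defining functions
$$
\rho_j(w,z,\bar z) \,=\, \Im\,w_j \,-\, Q_j(z,\bar z), \qquad j=1,\ldots,8,
$$
where each $Q_j$ is a real Hermitian quadratic form in $(z,\bar z)$ read off from the boxed equations, have $\bbR$-linearly independent differentials (the $\dr(\Im\,w_j)$ components alone are independent), so $M^{24}_{E_{II}}$ is a closed embedded real submanifold of $\bbC^{16}$ of real dimension $24$. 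Using $(u_j=\rea\,w_j,\,z^i)$ as intrinsic coordinates, a global frame of the CR distribution $H = TM \cap J(TM)$ is given by
$$
L_i \,=\, \frac{\partial}{\partial z^i} \,+\, 2i\,\sum_{j=1}^{8}\frac{\partial Q_j}{\partial z^i}\,\frac{\partial}{\partial w_j}\bigg|_{M}, \qquad i=1,\ldots,8,
$$
showing that $H$ has complex rank $n=8$ and CR codimension $k=8$.

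Next, I would compute the vector-valued Levi form $\mathcal L\colon H\wedge H \to TM/H$. Because every $Q_j$ is Hermitian quadratic, $[L_i,\bar L_{\bar\imath'}] \bmod H_{\bbC}$ at the origin is encoded directly by the Hermitian matrices of the $Q_j$; translation-invariance of the quadric in $(u_j,z^i)$ propagates this to every point. Assembling $\frak m := \frak m_{-1}\oplus \frak m_{-2}$ with $\frak m_{-1}=H_0$, $\frak m_{-2}=T_0M/H_0$, and bracket $\mathcal L$, I obtain a two-step fundamental graded real Lie algebra of dimensions $16\oplus 8$, together with a complex structure $J$ on $\frak m_{-1}$ inherited from $\bbC^{16}$. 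The core algebraic step is to verify that $(\frak m,J)$ is isomorphic to the negative part $\frak n_{-1}\oplus\frak n_{-2}$ of the $|2|$-grading of $\frak e_{II}$ corresponding to the crossing of the two lateral nodes of the Satake diagram. A root-system count of $\frak e_6$ restricted to the two crossed nodes already returns the correct split $16\oplus 8$; matching structure constants with the eight specific Hermitian forms $Q_j$, subject to the Satake involution $\alpha_1\leftrightarrow\alpha_6$ and $\alpha_3\leftrightarrow\alpha_5$, selects the real form $E_{II}$ (as opposed to $E_I$ or $E_{IV}$).

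With this algebraic identification in hand, I invoke Tanaka's prolongation theorem together with Yamaguchi's analysis of parabolic prolongations \cite{Yam}: the universal prolongation of $\frak m$ as a fundamental graded Lie algebra enhanced by the compatible complex structure $J$ on $\frak m_{-1}$ is precisely $\frak e_{II}$, with non-negative part equal to $\frak p_{(2)}$. The nilpotent group $N=\exp\frak m$ acts simply transitively on $M^{24}_{E_{II}}$ by CR automorphisms (it acts tautologically as left translations on the Heisenberg-type quadric), so $M^{24}_{E_{II}}$ is identified with the open Bruhat cell of $E_{II}/P_{(2)}$. Homogeneity then upgrades this to a local CR equivalence with the flat model everywhere and pins down the local CR automorphism group as $E_{II}$.

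The main obstacle is the bracket-by-bracket matching in the middle paragraph: one must exhibit a Chevalley-type basis of $\frak n_-\subset \frak e_6$ adapted to the Satake involution (so that the resulting real basis spans the negative part of $\frak e_{II}$ rather than of $\frak e_I$ or $\frak e_{IV}$) whose brackets reproduce, coefficient-by-coefficient, the eight boxed Hermitian forms $Q_j$. In particular, the fact that the four ``mixed'' equations for $\Im\,w_3,\Im\,w_4,\Im\,w_5,\Im\,w_6$ involve both $\rea$ and $\Im$ of products $z_i\bar z_{j'}$, while the remaining four involve only one or the other, is precisely the footprint of the Satake involution on the weight spaces of $\frak n_{-1}$, and this is what forces the real form to be $E_{II}$.
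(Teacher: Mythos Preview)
Your approach is correct but differs from the paper's in a way worth noting. The paper \emph{first forgets} $J$ entirely: it writes down the eight real $1$-forms $\lambda^i$ annihilating the rank-$16$ distribution $H$, computes the $2$-step nilpotent symbol algebra $\mathfrak{n}=\mathfrak{n}_{-2}\oplus\mathfrak{n}_{-1}$, and then carries out the Tanaka prolongation of the \emph{naked} $\mathfrak{n}$ explicitly, finding $\mathfrak{n}_0\cong 2\mathbb{R}\oplus\mathfrak{so}(3,5)$ (by exhibiting the invariant bilinear form on $\mathfrak{n}_{-2}$) and then $\dim\mathfrak{n}_{\pm1}=16$, $\dim\mathfrak{n}_{\pm2}=8$, so that the full prolongation is the $78$-dimensional $\mathfrak{e}_{II}$. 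Only \emph{after} this does the paper look for a $\binom{1}{1}$-tensor on $\mathfrak{n}_{-1}$ commuting with the $\mathfrak{n}_0$-action, finding that up to sign there is a unique one squaring to $-\mathrm{id}$, and that it coincides with the $J$ inherited from $\mathbb{C}^{16}$. This order of operations is the whole point of the paper's ``accidental'' theme: the distribution alone already knows about $J$, so the CR prolongation and the naked prolongation agree. Your route---keep $J$ from the start, match $(\mathfrak{m},J)$ against the negative part of the $|2|$-grading of $\mathfrak{e}_{II}$ via root-space bookkeeping, then invoke prolongation---is legitimate and arguably cleaner for the theorem as stated, but be careful with the citation: Yamaguchi's criterion concerns the prolongation of the bare $\mathfrak{m}$, not of $(\mathfrak{m},J)$. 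To close the argument you should either cite Medori--Nacinovich for the Levi--Tanaka prolongation, or run the sandwich explicitly: CR automorphisms $\subseteq$ automorphisms of $H$ $=\mathfrak{e}_{II}$ by Yamaguchi, while your identification of $M$ with the open Bruhat cell already exhibits $\mathfrak{e}_{II}$ acting by CR automorphisms. What your approach buys is avoiding the $30\times 30$ linear algebra for $\mathfrak{n}_0$; what the paper's buys is a direct demonstration that $J$ is forced by $H$, which is the result they actually care about.
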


\vspace{0.5cm}
\noindent
{\bf The case of $E_{III}$ symmetry.}\\
Consider $\bbC^{16}$ with holomorphic coordinates $(w^1,w^2,\dots,w^8,z^1,z^2,\dots, z^8)$, and its subset $M^{24}_{E_{III}}\subset \bbC^{16}$  defined by:
%/home/pawel/niebieski/notebooks/merker/E6_signature71_all_checking_embedding.nb
$$\boxed{\begin{aligned}
M^{24}_{E_{III}}\,=\,\, \Big\{~&\bbC^{16}\ni (w^1,w^2,\dots,w^8,z^1,z^2,\dots z^8)\,\,\mathrm{s.t.}\\
&\mathrm{Im}(w^1)~=~\mathrm{Re}(z_1\bar{z}_8+z_2\bar{z}_4+z_3\bar{z}_7+z_5\bar{z}_6)\\
&\mathrm{Im}(w^2)~=~\mathrm{Re}(z_1\bar{z}_4-z_2\bar{z}_8-z_3\bar{z}_6+z_5\bar{z}_7)\\
&\mathrm{Im}(w^3)~=~\mathrm{Re}(z_1\bar{z}_7+z_2\bar{z}_6-z_3\bar{z}_8-z_4\bar{z}_5)\\
&\mathrm{Im}(w^4)~=~\mathrm{Re}\big(i(z_1\bar{z}_2+z_3\bar{z}_5+z_4\bar{z}_8+z_6\bar{z}_7)\big)\\
&\mathrm{Im}(w^5)~=~\mathrm{Re}(z_1\bar{z}_6-z_2\bar{z}_7+z_3\bar{z}_4-z_5\bar{z}_8)\\
&\mathrm{Im}(w^6)~=~\mathrm{Re}\big(i(z_1\bar{z}_5+z_2\bar{z}_3-z_4\bar{z}_7+z_6\bar{z}_8)\big)\\
&\mathrm{Im}(w^7)~=~\mathrm{Re}\big(i(z_1\bar{z}_3-z_2\bar{z}_5+z_4\bar{z}_6+z_7\bar{z}_8)\big)\\
&\mathrm{Im}(w^8)~=~|z_1|^2+|z_2|^2+|z_3|^2+|z_4|^2+|z_5|^2+|z_6|^2+|z_7|^2+|z_8|^2\,\,\Big\}
\end{aligned}}.$$

We have the following theorem.
\begin{theorem}\label{e3}
The set $M^{24}_{E_{III}}\subset \bbC^{16}$ is a real 24-dimensional embedded CR manifold, acquiring the CR structure of CR dimension $n=8$ and CR codimension $k=8$ from the ambient complex space $\bbC^{16}$. Its local group of CR automorphisms is isomorphic to the real simple exceptional Lie group $E_{III}$ with the Lie algebra having the Satake diagram \begin{dynkinDiagram}[edge length=.4cm]{E}{oo***o}
\invol{1}{6}%\invol{3}{5}
 \end{dynkinDiagram}. It is locally CR equivalent to the flat model $E_{III}/P_{(1)}$ of a 24-dimensional parabolic geometry of type $(E_{III},P_{(1)})$, where the real parabolic subgroup $P_{(1)}$ in $E_{III}$ is determined by the following crossing on the corresponding $\mathfrak{e}_6$ Satake diagram: 
\begin{dynkinDiagram}[edge length=.4cm%,labels*={1,...,6}
  ]{E}{to***t}
\invol{1}{6}%\invol{3}{5}
 \end{dynkinDiagram}.
\end{theorem}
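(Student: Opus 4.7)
The plan is to extract the infinitesimal CR data directly from the eight defining equations, recognize it as the negative part of a $|2|$-grading of the real form $\mathfrak{e}_{III}$ of $\mathfrak{e}_6$, and then invoke Tanaka--Yamaguchi prolongation both to identify the full CR symmetry algebra and to obtain local equivalence with the flat model.

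First I would verify that the eight real defining functions
\[
\rho^\alpha(w,\bar w,z,\bar z) \;=\; \Im(w^\alpha) - H^\alpha(z,\bar z), \qquad \alpha=1,\dots,8,
\]
have linearly independent differentials everywhere; the part $\tfrac{1}{2i}(\dr w^\alpha - \dr\bar w^\alpha)$ is already independent, so $M^{24}_{E_{III}}$ is a smooth real submanifold of $\bbC^{16}$ of real codimension $8$. The $(1,0)$-parts $\partial\rho^\alpha$ are likewise independent, which forces the CR distribution $H$ to have complex rank $8$, so the CR dimension is $n=8$ and the CR codimension is $k=8$. I would also check, by evaluating the sesquilinear forms $H^\alpha$ at the origin, that their imaginary parts are linearly independent as real Hermitian-type forms on $\bbC^8$; this guarantees that $H$ is bracket generating with $[H,H]+H=\mathrm{T}M$, so the CR structure is $2$-step.

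Next I would exhibit, by CR automorphisms, the explicit action of the simply connected real $2$-step nilpotent group $N$ of dimension $24$ given by the Heisenberg-type formula
\[
(w,z) \;\longmapsto\; \bigl(w + w_0 + 2i\, H(z,\bar z_0) + i\, H(z_0,\bar z_0),\; z + z_0\bigr),
\]
with $H=(H^1,\ldots,H^8)$ collecting the sesquilinear forms from the defining equations. A short direct verification shows that this action preserves $M^{24}_{E_{III}}$ and is simply transitive. Its Lie algebra is the graded nilpotent algebra $\mathfrak{m} = \mathfrak{m}_{-1} \oplus \mathfrak{m}_{-2}$, with $\mathfrak{m}_{-1} \cong \bbC^8$ carrying the complex structure $J$, $\mathfrak{m}_{-2} \cong \bbR^8$, and bracket $[v,v']_{\mathfrak{m}_{-2}} = 2\,\Im H(v,v')$; this bracket coincides with the Levi form of the embedded CR structure at the origin. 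The crucial step is to identify $(\mathfrak{m},J)$ with the negative part of the $|2|$-grading of $\mathfrak{e}_{III}$ determined by crossing the first node of its Satake diagram (which forces the simultaneous crossing of the Satake-conjugate node $6$). That grading produces $\mathfrak{g}_{-1}$ of complex dimension $8$, namely a half-spin module for the $\mathfrak{d}_5$-type Levi factor, and $\mathfrak{g}_{-2}$ of real dimension $8$, and the real form $E_{III}$ endows $\mathfrak{g}_{-1}$ with a complex structure compatible with the bracket into $\mathfrak{g}_{-2}$. Comparing bracket tables, I would check that our eight sesquilinear forms $H^\alpha$ realize, up to a linear change of basis on $\mathfrak{g}_{-1} \oplus \mathfrak{g}_{-2}$, the $\mathfrak{e}_{III}$-equivariant map $\Lambda^2_{\bbR}\mathfrak{g}_{-1} \to \mathfrak{g}_{-2}$. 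This matching of structure constants is the one unavoidable concrete calculation and is the main technical obstacle, since it requires fixing explicit Chevalley generators of a real form of $\mathfrak{e}_6$ and comparing eight non-trivial sesquilinear expressions simultaneously.

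Once the identification of the negative graded data is in place, Tanaka's prolongation together with Yamaguchi's theorem \cite{Yam} -- which ensures that the $|2|$-grading of $\mathfrak{e}_6$ cut out at the first node is prolongation-rigid -- shows that the full Lie algebra of infinitesimal CR symmetries of $M^{24}_{E_{III}}$ coincides with $\mathfrak{e}_{III}$ and that the isotropy at the origin is the parabolic $\mathfrak{p}_{(1)}$ corresponding to the crossed Satake diagram displayed in the statement. Since $M^{24}_{E_{III}}$ is real-analytic and homogeneous under this $E_{III}$-action with exactly that isotropy, it is locally CR equivalent to the flat model $E_{III}/P_{(1)}$, which completes the proof.
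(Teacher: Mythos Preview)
Your proposal is correct and rests on the same Tanaka prolongation framework as the paper, but the order of operations is reversed. The paper works \emph{bottom-up}: it forgets $J$, writes the Pfaffian system $[\lambda^1,\dots,\lambda^8]$ in real coordinates, reads off the 2-step nilpotent symbol $\mathfrak{n}=\mathfrak{n}_{-2}\oplus\mathfrak{n}_{-1}$ from the constant structure coefficients of $\dr\lambda^A$, and then computes the Tanaka prolongation of the \emph{naked} distribution explicitly---solving the linear equations for $\mathfrak{n}_0$, recognizing it as $2\bbR\oplus\mathfrak{so}(1,7)$, and continuing to find the full symmetric grading $8+16+30+16+8=78$, which it then identifies as $\mathfrak{e}_{III}$. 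Only afterwards does the paper observe that there is a \emph{unique} (up to sign) endomorphism $J$ of $\mathfrak{n}_{-1}$ with $J^2=-\mathrm{id}$ commuting with $\mathfrak{n}_0$; this is what makes the CR structure ``accidental'' and forces the CR prolongation to coincide with the distribution prolongation. Your route is \emph{top-down}: you match the symbol $(\mathfrak{m},J)$ directly against the known negative part of the $|2|$-grading of $\mathfrak{e}_{III}$ at $\{\alpha_1,\alpha_6\}$ and then invoke Yamaguchi's Theorem~5.3 to conclude that the prolongation is $\mathfrak{e}_{III}$. The paper in fact acknowledges this alternative, remarking that the theorem also follows from its Section~5 together with Medori--Nacinovich. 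Your approach is cleaner once the structure-constant match is in hand, but that match is exactly where the work hides; the paper's explicit computation of $\mathfrak{n}_0$ avoids needing a Chevalley basis of $\mathfrak{e}_{III}$ and makes the accidentality visible as a by-product. One terminological quibble: what you call ``prolongation-rigid'' is really the statement that $\mathfrak{g}$ equals the Tanaka prolongation of $\mathfrak{n}_-$, not rigidity in the curvature sense; Yamaguchi's Theorem~5.3 is the correct reference for the former.
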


A \emph{CR structure} on a manifold $M$ is a triple $(M,H,J)$ where $H$ is rank $\ell=2n$ \emph{vector distribution} on $M$ and $J:H\to H$, such that $J^2=-\mathrm{id}_H$, is an \emph{integrable complex structure} on $H$. CR structures have their invariants, one of them being their \emph{symmetry group} $G_J$, also called the \emph{group of CR automorphisms}. If this group acts transitively on $M$ the CR structure $(M,H,J)$ is called \emph{homogeneous}. A CR structure $(M,H,J)$ may be viewed as an \emph{additional structure}, a \emph{decoration}, on the \emph{distribution structure} $(M,H)$, i.e. a manifold equipped $M$ with a rank $\ell=2n$ vector distribution. The distribution structures $(M,H)$ also have invariants, also allowing for the notion of \emph{their symmetry group} $G$, and their \emph{homogeneity}. 

In this paper we focus on CR structures $(M,H,J)$ which are \emph{accidental}. These are \emph{homogeneous} CR structures $(M,H,J)$ for which the \emph{undecorated} distribution structure $(M,H)$ is also \emph{homogeneous} and the corresponding \emph{symmetry groups} $G_J$ and $G$ \emph{coincide}. It follows that the embedded CR structures given in Theorems \ref{e2} and \ref{e3} are \emph{accidental} (see Section \ref{acci} and Definition \ref{rrr} for more details). Because of their \emph{accidentality}, the CR structures involved in Theorems \ref{e2} and \ref{e3} provide also nice realizations of the real exceptional simple Lie groups $E_{II}$ and $E_{III}$ in \emph{purely real} terms. Actually these two theorems imply the following two corollaries. In them these groups are identified as transformation groups of symmetries of two particular vector distributions near the origin of $\bbR^{24}$.

\begin{corollary}\label{co13}
  Consider a 24-dimensional manifold $M^{24}_{II}$ locally parametrized by real coordinates $(u^1,u^2,\dots,u^8,x^1,x^2,\dots,x^8,y^1,y^2,\dots,y^8)$ and the Pfaffian system of 1-forms $[\lambda^1,\lambda^2,\dots,\lambda^8]$  on $M^{24}_{II}$ given by
    $$\begin{aligned}
    \lambda^1=\,\,&\der u^1+\tfrac12\big(x^1\der y^4+x^2\der y^3+x^3\der y^2+x^4\der y^1-y^1\der x^4-y^2\der x^3-y^3\der x^2-y^4\der x^1\big)\\
    \lambda^2=\,\,&\der u^2+\tfrac12\big(x^1\der y^6+x^2\der y^5+x^5\der y^2+x^6\der y^1-y^1\der x^6-y^2\der x^5-y^5\der x^2-y^6\der x^1\big)\\
    \lambda^3=\,\,&\der u^3+\tfrac12\big(x^1\der x^7-x^3\der x^5+x^5\der x^3-x^7\der x^1+y^1\der y^7-y^3\der y^5+y^5\der y^3-y^7\der y^1\big)\\
    \lambda^4=\,\,&\der u^4+\tfrac12\big(x^1\der x^8+x^2\der x^7+x^3\der x^6+x^4\der x^5-x^5\der x^4-x^6\der x^3-x^7\der x^2-x^8\der x^1+\\&\quad\quad\quad\quad y^1\der y^8+y^2\der y^7+y^3\der y^6+y^4\der y^5-y^5\der y^4-y^6\der y^3-y^7\der y^2-y^8\der y^1\big)\\
    \lambda^5=\,\,&\der u^5+\tfrac12\big(y^1\der x^8-y^2\der x^7-y^3\der x^6+y^4\der x^5+y^5\der x^4-y^6\der x^3-y^7\der x^2+y^8\der x^1-\\&\quad\quad\quad\quad x^1\der y^8+x^2\der y^7+x^3\der y^6-x^4\der y^5-x^5\der y^4+x^6\der y^3+x^7\der y^2-x^8\der y^1\big)\\
    \lambda^6=\,\,&\der u^6+\tfrac12\big(x^2\der x^8-x^4\der x^6+x^6\der x^4-x^8\der x^2+y^2\der y^8-y^4\der y^6+y^6\der y^4-y^8\der y^2\big)\\
    \lambda^7=\,\,&\der u^7+\tfrac12\big(x^3\der y^8+x^4\der y^7+x^7\der y^4+x^8\der y^3-y^3\der x^8-y^4\der x^7-y^7\der x^4-y^8\der x^3\big)\\
    \lambda^8=\,\,&\der u^8+\tfrac12\big(x^5\der y^8+x^6\der y^7+x^7\der y^6+x^8\der y^5-y^5\der x^8-y^6\der x^7-y^7\der x^6-y^8\der x^5\big).
  \end{aligned}
  $$
  Then the rank 16 distribution $H$ defined on $M^{24}_{II}$ via
  $$H=\{\mathrm{T}M^{24}_{II}\ni X\,\,\mathrm{s.t.}\,\,X\hook \lambda^i=0,\,\,\forall i=1,2,\dots, 8\}$$ has the real exceptional simple Lie group $E_{II}$ as its group of symmetries.
\end{corollary}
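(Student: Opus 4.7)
The plan is to obtain Corollary~\ref{co13} as a purely real-coordinate reformulation of Theorem~\ref{e2}. First, I would parametrize $M^{24}_{E_{II}} \subset \bbC^{16}$ by real variables via the map
\[
\Phi\colon (u^1,\ldots,u^8,x^1,\ldots,x^8,y^1,\ldots,y^8)\longmapsto \bigl(u^j + i\,\phi^j(z,\bar z),\; x^k + i y^k\bigr),
\]
where $\phi^j(z,\bar z)$ denotes the right-hand side of the $j$-th defining equation of $M^{24}_{E_{II}}$ and $z^k = x^k+iy^k$. The map $\Phi$ is a diffeomorphism of $\bbR^{24}$ onto $M^{24}_{E_{II}}$, and transports the rank-$16$ CR distribution $H$ of Theorem~\ref{e2} to a rank-$16$ real distribution on $M^{24}_{II}$ that I shall identify with $\ker[\lambda^1,\ldots,\lambda^8]$.

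Next, I would compute the real annihilator of this transported $H$ directly from the defining equations. Writing $\rho^j = \mathrm{Im}(w^j) - \phi^j(z,\bar z)$, the complexified distribution $T^{1,0}M \oplus T^{0,1}M$ is cut out by the pullbacks of $\partial\rho^j$ and $\bar\partial\rho^j$ to $M$. Since $\rho^j$ is real and vanishes on $M$, one has $\partial\rho^j|_M = -\overline{\partial\rho^j|_M}$, so the real 1-form annihilating the real $H$ is, up to a convention-dependent overall constant, $\lambda^j \propto du^j + c\,\mathrm{Im}(\partial\phi^j)$. To match the displayed formulas term by term, one uses the elementary identities
\[
\mathrm{Im}(\bar z^a\, dz^b) = x^a\,dy^b - y^a\,dx^b, \qquad \mathrm{Re}(\bar z^a\, dz^b) = x^a\,dx^b + y^a\,dy^b,
\]
applied to each summand of $\partial\phi^j = \sum_k \phi^j_{z^k}\, dz^k$. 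The summands $\mathrm{Re}(z^a\bar z^b)$ in the defining equations of $M^{24}_{E_{II}}$ produce the $x\,dy$ and $y\,dx$ contributions to $\lambda^j$, while the summands $\mathrm{Im}(z^a\bar z^b)$ produce the $x\,dx$ and $y\,dy$ contributions. Running through each of the eight defining equations in Theorem~\ref{e2} reproduces, after fixing the overall scaling, the explicit Pfaffian system $\lambda^1,\ldots,\lambda^8$ displayed in the corollary.

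Finally, I would invoke the \emph{accidentality} of this CR structure, discussed after Theorems~\ref{e2} and~\ref{e3} and formalized in Section~\ref{acci} (Definition~\ref{rrr}): for an accidental CR manifold the local symmetry group of the underlying real distribution $(M,H)$ coincides with the group $G_J$ of CR automorphisms of $(M,H,J)$. Combined with Theorem~\ref{e2}, which identifies $G_J$ with the real exceptional simple Lie group $E_{II}$, this implies at once that the symmetry group of $\ker[\lambda^1,\ldots,\lambda^8]$ on $M^{24}_{II}$ is $E_{II}$.

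The main obstacle is the bookkeeping of indices and signs in the second step: the eight defining equations of $M^{24}_{E_{II}}$ mix $\mathrm{Re}$ and $\mathrm{Im}$ of monomials $z^a\bar z^b$ with varying signs, and one must verify for each $j$ that $\mathrm{Im}(\partial\phi^j)$ reproduces the correct explicit 1-form. This is nevertheless a purely mechanical verification; the substantive geometric content — accidentality and the identification of the CR automorphism group with $E_{II}$ — is already established in Theorem~\ref{e2} and Section~\ref{acci}.
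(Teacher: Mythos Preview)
Your proposal reverses the logical direction actually used in the paper. Although the introduction says that Theorems~\ref{e2} and~\ref{e3} ``imply'' Corollaries~\ref{co13} and~\ref{co14}, the proof in Section~4 runs the other way: the content of Corollary~\ref{co13} is established \emph{directly} as Lemma~\ref{e3l}, and Theorem~\ref{e2} together with accidentality are then \emph{deduced from it}. Concretely, the paper extends $\lambda^1,\ldots,\lambda^8$ to a coframe by adjoining $\lambda^{i+8}=\der x^i$, $\lambda^{i+16}=\der y^i$, reads off the constant structure coefficients in $\der\lambda^A=-\tfrac12 c^A{}_{BD}\lambda^B\wedge\lambda^D$, identifies the $2$-step nilpotent symbol algebra $\mathfrak{n}=\mathfrak{n}_{-2}\oplus\mathfrak{n}_{-1}$, and computes its Tanaka prolongation. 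One finds $\mathfrak{n}_0\cong 2\bbR\oplus\soa(3,5)$, a symmetric gradation $\mathfrak{n}_{-2}\oplus\mathfrak{n}_{-1}\oplus\mathfrak{n}_0\oplus\mathfrak{n}_1\oplus\mathfrak{n}_2$ of total dimension $8+16+30+16+8=78$, and identifies it with the real form $E_{II}$ of $\mathfrak{e}_6$. Only \emph{after} this does the paper observe that there is a unique (up to sign) $J$ on $\mathfrak{n}_{-1}$ commuting with the $\mathfrak{n}_0$-action, so that the CR Tanaka prolongation coincides with the one just computed; this is how both Theorem~\ref{e2} and accidentality are obtained.

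Hence your final step of ``invoking accidentality and Theorem~\ref{e2}'' is circular within the paper's framework: those statements are consequences of, not inputs to, the $E_{II}$ symmetry of the naked distribution. Your preliminary steps---the real parametrization and the verification that the displayed $\lambda^i$ annihilate the CR distribution $H$---are correct and do appear in the paper, but only as the set-up for the Tanaka-prolongation computation, which is where the substantive work lies and which your proposal omits.
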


We also have:

\begin{corollary}\label{co14}
  Consider a 24-dimensional manifold $M^{24}_{II}$ locally parametrized by real coordinates $(u^1,u^2,\dots,u^8,x^1,x^2,\dots,x^8,y^1,y^2,\dots,y^8)$ and the Pfaffian system of 1-forms $[\lambda^1,\lambda^2,\dots,\lambda^8]$  on $M^{24}_{II}$ given by$$\begin{aligned}
    \lambda^1=\,\,&\der u^1+x^1\der y^8+x^2\der y^4+x^3\der y^7+x^4\der y^2+x^5\der y^6+x^6\der y^5+x^7\der y^3+x^8\der y^1\\
    \lambda^2=\,\,&\der u^2+x^1\der y^4+y^8\der x^2+y^6\der x^3+x^4\der y^1+x^5\der y^7+y^3\der x^6+x^7\der y^5+y^2 \der x^8\\
    \lambda^3=\,\,&\der u^3+x^1\der y^7+x^2\der y^6+y^8\der x^3+y^5\der x^4+y^4\der x^5+x^6\der y^2+x^7\der y^1+y^3\der x^8\\
    \lambda^4=\,\,&\der u^4+x^2\der x^1+x^5\der x^3+x^8\der x^4+x^7\der x^6+y^2\der y^1+y^5\der y^3+y^8\der y^4+y^7\der y^6\\
    \lambda^5=\,\,&\der u^5+x^1\der y^6+y^7\der x^2+x^3\der y^4+x^4\der y^3+y^8\der x^5+x^6\der y^1+y^2\der x^7+y^5\der x^8\\
    \lambda^6=\,\,&\der u^6+x^5\der x^1+x^3\der x^2+x^4\der x^7+x^8\der x^6+y^5\der y^1+y^3\der y^2+y^4\der y^7+y^8\der y^6\\
    \lambda^7=\,\,&\der u^7+x^3\der x^1+x^2\der x^5+x^6\der x^4+x^8\der x^7+y^3\der y^1+y^2\der y^5+y^6\der y^4+y^8\der y^7\\
    \lambda^8=\,\,&\der u^8+x^1\der y^1+x^2\der y^2+x^3\der y^3+x^4\der y^4+x^5\der y^5+x^6\der y^6+x^7\der y^7+x^8\der y^8.\\
  \end{aligned}
  $$
    Then the rank 16 distribution $H$ defined on $M^{24}_{III}$ via
  $$H=\{\mathrm{T}M^{24}_{III}\ni X\,\,\mathrm{s.t.}\,\,X\hook \lambda^i=0,\,\,\forall i=1,2,\dots, 8\}$$ has the real exceptional simple Lie group $E_{III}$ as its group of symmetries.
\end{corollary}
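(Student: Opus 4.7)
The plan is to deduce Corollary~\ref{co14} from Theorem~\ref{e3} via the accidentality property that is the central theme of this paper. Concretely, I would show that the rank~$16$ Pfaffian distribution $H$ defined by the $\lambda^j$ of the corollary is locally equivalent, as a real distribution on a $24$-manifold, to the underlying real distribution of the CR structure $(M^{24}_{E_{III}}, H, J)$ of Theorem~\ref{e3}. Accidentality then identifies the local symmetry group of the real distribution with the local CR automorphism group, and Theorem~\ref{e3} asserts that the latter is $E_{III}$.

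For the equivalence of distributions, introduce real coordinates on $M^{24}_{E_{III}}$ by writing $w^j = u^j + i\, v^j(z,\bar z)$, with $v^j$ equal to the right-hand side of the $j$-th defining equation in Theorem~\ref{e3}, and $z^k = x^k + i y^k$. A standard computation shows that the real CR distribution of $M^{24}_{E_{III}}$ is the common kernel of the real $1$-forms
$$\theta^j \,=\, du^j - i\bigl(\partial v^j - \bar\partial v^j\bigr)\big|_{M^{24}_{E_{III}}}, \qquad j = 1, \ldots, 8.$$
Comparing each $\theta^j$, expressed in the real coordinates $(u^k, x^k, y^k)$, with the $\lambda^j$ of the corollary, one exhibits a local diffeomorphism of $M^{24}_{III}$ — conveniently built from an ambient biholomorphism $w^j \mapsto w^j + i\, h^j(z)$ of $\bbC^{16}$ with suitably chosen holomorphic polynomials $h^j(z)$, possibly composed with an invertible $\bbR$-linear change among the $u^j$ — that sends the Pfaffian system $[\lambda^1, \ldots, \lambda^8]$ onto the Pfaffian system $[\theta^1, \ldots, \theta^8]$. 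Consequently the two real distributions are locally equivalent and share the same local symmetry group.

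By the accidentality of the CR structure $(M^{24}_{E_{III}}, H, J)$ (see the fourth bullet of the abstract, Section~\ref{acci}, and Definition~\ref{rrr}), the local symmetry group of the real distribution $(M, H)$ coincides with the local CR automorphism group of $(M, H, J)$. By Theorem~\ref{e3} the latter equals the real exceptional simple Lie group $E_{III}$, which yields the claim. The growth vector $(16, 24)$ is automatic from the CR dimension $n = 8$, codimension $k = 8$, and the bracket-generating condition $H + [H, H] = \mathrm{T}M$ recorded in the fourth bullet of the abstract.

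The main technical obstacle is the explicit matching $\theta^j \leftrightarrow \lambda^j$ in the first step. The $v^j$ of Theorem~\ref{e3} come in two qualitatively different types — Hermitian (for instance $v^8 = |z_1|^2 + \cdots + |z_8|^2$), which yields $\theta^j$ with symmetric $x\,dy - y\,dx$ contributions, versus skew-Hermitian (for instance $v^4 = \rea\bigl(i(z_1\bar z_2 + z_3\bar z_5 + z_4\bar z_8 + z_6\bar z_7)\bigr)$), which yields $\theta^j$ with antisymmetric $x\,dx, y\,dy$ contributions — and finding a single coordinate change that simultaneously intertwines all eight pairs is routine but requires patient bookkeeping. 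An alternative route that bypasses this matching altogether is to compute the Tanaka symbol of the distribution $H$ directly from the $\lambda^j$, verify that it is isomorphic to the negative part of $\mathfrak{e}_{III}$ in the $|2|$-grading determined by the parabolic $P_{(1)}$, and invoke Yamaguchi's maximality results to conclude that the flat model has $E_{III}$ as its full group of local symmetries.
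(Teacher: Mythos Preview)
Your main route\,---\,deduce the corollary from Theorem~\ref{e3} plus accidentality\,---\,matches the rhetorical order announced in the introduction, but it runs opposite to the paper's actual proof. In Section~4 the paper first takes the Pfaffian forms of Corollary~\ref{co14} (obtained from the embedding of Theorem~\ref{e3} via formula~\eqref{lambda}, after exactly the coordinate adjustments and rescalings you anticipate; cf.\ the footnote in the proof of Theorem~\ref{e2}), computes the exterior derivatives $\der\lambda^A$, reads off the $2$-step nilpotent symbol $\mathfrak{n}=\mathfrak{n}_{-2}\oplus\mathfrak{n}_{-1}$, and then carries out the Tanaka prolongation explicitly: $\mathfrak{n}_0$ comes out as $2\bbR\oplus\soa(1,7)$, the prolongation terminates at $\mathfrak{n}_2$ with the symmetric pattern $8+16+30+16+8=78$, and the resulting graded algebra is identified as the real form $E_{III}$ with the parabolic $P_{(1)}$. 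This \emph{is} the content of Corollary~\ref{co14}, proved directly. Only afterwards does the paper exhibit the unique (up to sign) $\mathfrak{n}_0$-invariant endomorphism $J$ of $\mathfrak{n}_{-1}$ with $J^2=-\mathrm{id}$, thereby establishing accidentality, and conclude Theorem~\ref{e3}. So in the paper's internal logic both accidentality and Theorem~\ref{e3} rest on Corollary~\ref{co14}; your main argument, which cites accidentality via the abstract and Definition~\ref{rrr} (places that only \emph{define} the notion), would be circular against that dependency chain.

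Your closing ``alternative route''\,---\,compute the symbol from the $\lambda^j$ and appeal to Tanaka/Yamaguchi\,---\,is essentially what the paper does (it performs the full prolongation by hand rather than citing \cite{Yam}, but the substance is the same). What your main route would buy, were accidentality available independently (and it is, via the Medori--Nacinovich classification recalled in Section~\ref{sec5}), is a clean reduction of the real-distribution statement to the CR statement; what the paper's route buys is self-containment and the explicit identification of $\mathfrak{n}_0$, which is precisely what makes the accidentality of $J$ visible.
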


We close this Introduction to focus reader's attention on the questions we address in this paper:
\begin{enumerate}
\item Can a homogeneous bracket generating real distribution $H$, of rank $\ell=2n$ on a real manifold $M=G/P$, admit an integrable complex structure $J$ on $H$, such that the associated CR structure $(M,H,J)$ on $M$ is homogeneous, having the same group $G$ of CR automorphisms as the group of automorphisms of the naked distribution $H$? According to our terminology, which was briefly introduced between Theorem \ref{e3} and Corollary \ref{co13}, this question asks \emph{if there exist accidental CR structures}?\label{enu1}
\item There is a large class of homogeneous bracket generating distributions $H$ which define \emph{flat models of} various \emph{parabolic geometries}. In this context, the above question can be restricted to: \emph{do there exist accidental} parabolic \emph{CR structures}?
\item The most geometrically studied CR structures $(M,H,J)$ are the \emph{hypersurface type} CR structures  with \emph{nondegenerate Levi form}. They are examples of parabolic geometries. All these are \emph{not} accidental, since their distributions $H$ are \emph{contact distributions} having \emph{infinite dimensional} group of automorphisms, whereas their groups of CR automorphisms can not be larger than  finite dimensional simple Lie groups $\sug(p,q)$. 
\item There are also known nontrivial classes of \emph{non}-accidental CR structures associated with other parabolic geometries than those of hypersurface type Levi-nondegenerate CRs. 
  For example one can take the exceptional simple $G_2$ parabolic geometry associated with a generic rank 2 distribution $H$ on a five manifold $M$. Here, at least locally one can always find $J$ on $H$ so that the real codimension three and complex dimension one CR structure $(M,H,J)$ has lower symmetry than the 14-dimensional group $G_2$. Actually it is well known \cite{nurowskiG2CR} that on the $G_2$ flat rank 2 distribution $H$ in dimension 5, one can put a CR structure $J$ with group of CR automorphisms of dimension no larger than 7. Such examples are however easy to make, because in them the distribution $H$ has rank 2, i.e. the only requirement $J$ has to fulfill is the algebraic constraint $J^2=-\mathrm{id}_H$, and one does not need to worry about the integrability of $J$ in $H$.
  \item When $n>1$, given a rank $\ell=2n$ distribution on $M$, to put $J$ on $H$, apart from the algebraic constraint $J^2=-\mathrm{id}_H$, one needs to impose \emph{nonlinear differential} constraints on $J$, which may be incompatible with $H$. One therefore may think, that if the parabolic geometry associated with rank $\ell=2n$ distribution $H$ admits an integrable complex structure $J$ in $H$, then it \emph{must be accidental}. This brings yet another question: \emph{is it true that if a flat distribution $H$ of rank $\ell=2n\geq 4$ defining a parabolic geometry of type $(G,P)$ on a manifold $M=G/P$ admits an integrable $J$, then the associated CR structure $(M,H,J)$ is accidental}?
    \end{enumerate}
We believe that our explanations and examples included in this paper clarify all the issues enumerated above. 
\section{Basic notions and motivation}
\subsection{Distribution structure on a manifold}
This is a pair $(M,H)$, where $M$ is a smooth manifold, and $H$ is a \emph{vector distribution} on $M$. We recall that a rank $\ell$ \emph{vector distribution} $H$ on an $m$-dimensional smooth manifold $M$, or an $\ell$-\emph{distribution} $H$ on $M$, for short, is a smooth assignment $M\ni p\to H_p\subset \mathrm{T}_pM$ of an $\ell$-dimensional vector subspace $H_p$ of $\mathrm{T}_pM$, to each point $p\in M$. In the spirit of Felix Klein, one associates a \emph{geometry} with such objects by saying that two $\ell$-distributions $H_1$ and $H_2$ on $M$ are (locally) \emph{equivalent} if there exists a (local) diffeomorphism $\phi: M\to M$ on $M$ such that $\phi_*H_1=H_2$. It follows, that if the distributions are \emph{not} integrable, i.e. if they do \emph{not} satisfy the Frobenius condition $[H,H]\subset H$, then starting from dimension $m=5$ of $M$, there exist locally nonequivalent $\ell$-distributions. From now on we will only consider \emph{non}integrable $\ell$-distributions on $M$.

Let us introduce the simplest \emph{local invariant} of an $\ell$-distribution $H$, namely its \emph{growth vector}. It is determined by considering a sequence of distributions on $M$ defined inductively as follows: $D_{-1}:=H$, $D_{-i-1}:=[D_{-1},D_{-i}]+D_{-i}$ for $i\in\bbN$. The growth vector of $H$ is related to the ranks of these distributions. Explicitly, it is the nondecreasing sequence of integers $(r_{-1},r_{-2},\dots,r_{-i},\dots)$, with each $r_{-i}$ being the rank of the corresponding distribution $D_{-i}$, $r_{-i}=\mathrm{rank}(D_{-i})$. In particular $r_{-1}=\ell$. In general the growth vector can vary from point to point on $M$, but in this paper, we will only consider distribution structures $(M,H)$ with (locally) \emph{constant} growth vector. We further mention that the distribution $H$ from the structure $(M,H)$ is \emph{bracket generating} if $r_{-s}=m=\dim(M)$ for some $s\in \bbN$. In the case of bracket generating $\ell$-distributions with constant growth vector, to give more information about them, one includes the growth vector $(\ell,r_{-2},\dots,m)$ in their name. One therefore has such names as e.g. a $(2,3,5)$-distribution, which denotes a rank 2 distribution in diemnsion 5, with a constant growth vector $(2,3,5)$.

Another simple (local) \emph{invariant} of an $\ell$-distribution $H$ on $M$ is its (local) \emph{group of automorphisms} $G$. This consists of those (local) diffeomorphisms $\phi:M\to M$, called \emph{symmetries}, which \emph{preserve} $H$, i.e. are such that $\phi_*H=H$. The group multiplication in $G$ is the composition of the symmetries as diffeomorphisms. For short the (local) group $G$ of automorphisms of $(M,H)$ is called the group of (local) symmetries of the rank $\ell$-distribution $H$. If a (local) group $G$ of symmetries of $H$ acts transitively on $(M,H)$, then the $\ell$-distribution $H$ is called (locally) \emph{homogeneous}. In such case, the manifold $M$ is locally diffeomorphic to $M=G/P$, where $P$ is the isotropy subgroup of $G$, which preserves $H_p$ at a point $p\in M$. It is known that there exist locally (and globally) nonequivalent homogeneous $\ell$-distributions on manifolds. From the local point of view, and in particular in the homogeneous case, to see that the group $G$ of symmetries of two distributions are different, it is enough to consider the algebraic structure of the space of \emph{vector fields} $Y$ on $M$ such that $[Y,H]\subset H$. These are called \emph{infinitesimal symmetries} of $H$ and form a \emph{Lie algebra} $\mathfrak{g}$ \emph{of symmetries} of the pair $(M,H)$. The Lie algebra $\mathfrak{g}$ is obviously a local invariant of an $\ell$-distribution $H$ on $M$. It is the Lie algebra of $G$.  

One can locally define an $\ell$-distribution $H$ on $M$ by distinguishing a rank $(m-\ell)$ subbundle $H^\perp\subset \mathrm{T}^*M$ of the cotangent bundle $\mathrm{T}^*M$  and saying that $H$ consists of all vector fields $X$ on $M$ which \emph{annihilate} $H^\perp$, $H:=\{X\in\mathrm{T}M\,|\,X\hook H^\perp=0\}$.
\begin{example}\label{ex1}
   %/home/pawel/nibieski/notebook/merker/DennyZhaohu/sp_baby_cartan_paper.nb
  As an example of a distribution defined in this way let us consider the following $4$-distribution $H$ in dimension $m=7$.
  %which for the first time was briefly mentioned in the German language version of the famous PhD thesis of Elie Cartan:
  %maple file which calculates it:
  %/home/pawel/niebieski/worksheets/distribution_C4/Cartan_f4_which_makes_calculation_to_end.mw
  %mathematica file:
  %another mathematica file, for octonionic coordinates:
  %/home/pawel/niebieski/notebooks/merker/DennyZhaohu/sp_baby_cartan_invariant_objects_octonions.nb
  %Let $M=\bbR^{15}$ with Cartesian coordinates $()$ and $H^\perp=\Span(\lambda^1,\lambda^2,\dots,\lambda^7)$, with the 1-forms $\lambda^i$, $i=1,2,\dots,7$ being defined by:
  Let $M=\bbR^{7}$ with Cartesian coordinates $(x^1,x^2,x^3,x^4,x^5,x^6,x^7)$ and let $$H^\perp=\Span(\lambda^1,\lambda^2,\lambda^3),$$ with the 1-forms $\lambda^i$, $i=1,2,3$, being defined by:
  \begin{equation}
  \begin{aligned}
    \lambda^1&=\der x^5+x^1\der x^4+x^2\der x^3,\\
    \lambda^2&=\der x^6+x^3\der x^4+x^1\der x^2,\\
    \lambda^3&=\der x^7+x^3\der x^1+x^2\der x^4.
  \end{aligned}
  \label{baby}
  \end{equation}
  The corresponding $4$-distribution $H$ is
  \begin{equation}H=\Span(X_1,X_2,X_3,X_4),\label{babyh}\end{equation}
  with the four vector fields $X_i$, $i=1,2,3,4$ annihilating $H^\perp$, given by
  \begin{equation}
    \begin{aligned}X_1&=\partial_1-x^3\partial_7,\\ X_2&=\partial_2-x^1\partial_6,\\ X_3&=\partial_3-x^2\partial_5,\\ X_4&=\partial_4-x^1\partial_5-x^3\partial_6-x^2\partial_7.\end{aligned}\label{baby*}\end{equation}
  One checks, that the $4$-distribution $H$ is \emph{not} integrable, actually $$[H,H]+H=\mathrm{T}M,$$
  because
  \begin{equation}
    \begin{aligned}
{}
      [X_3,X_2]&=[X_4,X_1]=\partial_5,\\
      [X_2,X_1]&=[X_4,X_3]=\partial_6,\\
      [X_1,X_3]&=[X_4,X_2]=\partial_7.
    \end{aligned}
    \end{equation}
  Thus, the distribution structure $(M,H)$ is enforced on $M=\bbR^7$ by a $(4,7)$ distribution $H$.

  Moreover, after solving the symmetry equations
  $$
    [Y,X_i]\dz X_1\dz X_2\dz X_3\dz X_4=0,\quad \mathrm{for}\quad i=1,2,3,4,
    $$
    one finds that the \emph{Lie algebra} $\mathfrak{g}$ \emph{of infinitesimal symmetries} of $H$ is isomorphic to the \emph{simple} Lie algebra $\spa(1,2)$. Thus this distribution has a 21 dimensional Lie algebra of symmetries. In particular, as can be easily checked, vector fields:
    $$\tiny{\begin{aligned}
      Y_{10}&=x^3\partial_1+x^4\partial_2-x^1\partial_3-x^2\partial_4+(x^1x^2-x^3 x^4)\partial_5+\tfrac12\big((x^1)^2+(x^2)^2-(x^3)^2-(x^4)^2\big)\partial_7,\\
      Y_{12}&=\partial_4\end{aligned}}$$
    are infinitesimal symetries of $(M,H)$.
%%/home/pawel/niebieski/distribution_C4/babyCartan_paper_with_firstCR.mw
%    See Appendix A for the explicit formulas for all the symmetry generators $Y_\nu$, $\nu=1,2,\dots, 21$. %%/home/pawel/niebieski/distribution_C4/babyCartan_paper.mw

    It follows \cite{CS} that the distribution structure given by a pair $(M=\bbR^7,H)$, as above, is locally diffeomorphic to the \emph{flat model} $M=\spg(1,2)/P$ of a \emph{parabolic geometry} of type $(\spg(1,2),P)$, where $P$ is a \emph{parabolic subgroup} in $\spg(1,2)$ related to the following crossed Satake Diagram: $\tikzset{/Dynkin diagram/fold style/.style={stealth-stealth,thin,
shorten <=1mm,shorten >=1mm}}\begin{dynkinDiagram}[edge length=.5cm]{C}{*t*}
 \end{dynkinDiagram}$. It gives an example of a nonintegrable, \emph{bracket generating}, homogeneous distribution on a manifold with a \emph{simple} symmetry group $G$ (in this case $G=\spg(1,2)$).  
\end{example}
%\edz{Section 2.2 is new; it is reformulated from the description, I gave in the old version in Section 4.1, in the proof of the Lemma, immediatelyt under two bullets there (p.28)}
\subsection{Tanaka prolongation and symmetry.} We only provide the minimal information, as regards our needs, about the Tanaka theory, refering a reader interested in details to the original papers \cite{tanaka}.

    In the context of distributions, the Tanaka theory is mainly applied to the case when a vector distribution $H$ on a real $N$-dimensional manifold $M^N$ defines a $p$-\emph{step filtration} \begin{equation}{\mathcal D}^{-1}\subset{\mathcal D}^{-2}\subset\dots\subset{\mathcal D}^{-p}=\mathrm{T}M^N\label{pstep1}\end{equation} of the tangent bundle of $M^N$, with the filtered components ${\mathcal D}^{-k}$ defined by:
    \begin{equation}{\mathcal D}^{-1}=H,\quad\mathrm{and}\quad {\mathcal D}^{-k-1}=[{\mathcal D}^{-1},{\mathcal D}^{-k}]+{\mathcal D}^{-k}\quad \forall k<p,\,\, k,p\in\bbN.\label{pstep2}\end{equation}  This defines a \emph{graded vector space} $$\mathfrak{n}_\minu=\mathfrak{n}_{\minu p}\oplus\mathfrak{n}_{\minu (p\minu 1)}\oplus\cdots\oplus\mathfrak{n}_{\minu 1}$$
    via $$\mathfrak{n}_{\minu k}={\mathcal D}^{-k}/{\mathcal D}^{-k+1},$$
    which, at every point $x\in M^N$ defines a $p$-step \emph{nilpotent Lie algebra} $(\mathfrak{n}_\minu(x),[\cdot,\cdot]_x)$ with the Lie bracket $[\cdot,\cdot]_x$ induced by the Lie bracket of vector fields on $M^N$. The Lie algebra $(\mathfrak{n}_\minu(x),[\cdot,\cdot]_x)$ is called the \emph{symbol algebra} (or the \emph{nilpotent aproximation}) of the distribution $H$. It serves as a \emph{local algebraic differential invariant of the distribution} $H$ around $x\in M^N$. 

    Let us, from now on, restrict to the case of distributions whose symbol algebras $\mathfrak{n}_\minu(x)$ are \emph{constant} over $M^N$, i.e such that $\mathfrak{n}_\minu(x)=\mathfrak{n}_\minu$ for all $x\in M^N$. In particular, \emph{homogeneous} distributions are examples of those.

    Every distribution with a constant symbol, has therefore a \emph{unique} $p$-step nilpotent Lie algebra $(\mathfrak{n}_\minu,[\cdot,\cdot])$ associated to it. This characterizes it \emph{algebraically}.

    Although (even locally) there exist \emph{non}equivalent distributions with the same constant symbol, they are all sort of a \emph{perturbation of}, or better to say, they are \emph{modelled on}, a standard distribution with this symbol. This standard distribution is called the \emph{flat model} for distributions with a given symbol $\mathfrak{n}_\minu$. It is naturally defined on a manifold $Nil$, which is the Lie group of the symbol algebra $\mathfrak{n}_\minu$, as follows:
    
    A gradation $\mathfrak{n}_\minu=\mathfrak{n}_{\minu p}\oplus\mathfrak{n}_{\minu (p\minu 1)}\oplus\cdots\oplus\mathfrak{n}_{\minu 1}$ in the symbol algebra $\mathfrak{n}_\minu$, is mirrored in the Lie algebra
$$\mathfrak{n}^{Nil}_\minu=\mathfrak{n}^{Nil}_{\minu p}\oplus\mathfrak{n}^{Nil}_{\minu (p\minu 1)}\oplus\dots\oplus\mathfrak{n}^{Nil}_{\minu 1}$$
    of the left invariant vector fields on $Nil$, which is isomorphic to $\mathfrak{n}_\minu$. Then this defines the filtration in $Nil$, with the filtered components ${\mathcal D}^{-k}$ spanned over all smooth \emph{functions} $f\in{\mathcal F}(Nil)$ on $Nil$, by those left invariant vector fields on $Nil$, which belong to $\mathfrak{n}^{Nil}_{\minu k}\oplus \mathfrak{n}^{Nil}_{\minu k+1}\oplus\dots\oplus \mathfrak{n}^{Nil}_{\minu 1}$, namely
$${\mathcal D}^{-k}=\Span_{{\mathcal F}(Nil)}(\mathfrak{n}^{Nil}_{\minu k}\oplus \mathfrak{n}^{Nil}_{\minu k+1}\oplus\dots\oplus \mathfrak{n}^{Nil}_{\minu 1}).$$
    It follows that the first step $H={\mathcal D}^{-1}$ in this filtration is the distribution on $Nil$ with the symbol $\mathfrak{n}_\minu$. This serves as a \emph{flat model} for all the distributions with the constant symbol $\mathfrak{n}_\minu$.

    The symbol algebra $\mathfrak{n}_\minu$ of the distribution $H$ is, as its alternative name suggests, its \emph{algebraic approximation}. In particular, it captures information about the local properties of the \emph{maximal possible} group of automorphisms of all distributions with a given \emph{constant} symbol $\mathfrak{n}_\minu$.

    It is not a surprise that the maximal symmetry for all distributions with a constant symbol algebra $\mathfrak{n}_\minu$, is realized for the natural distribution structure  $(Nil,H={\mathcal D}^{-1})$ on the nilpotent Lie group $Nil$ associated with $\mathfrak{n}_\minu$. Moreover, the algebraic structure of the maximal group of automorphisms of distributions $H$ with a constant symbol $\mathfrak{n}_\minu$, namely the maximal Lie algebra of the automorphisms $\mathfrak{aut}(H)$ for all these distributions $H$, is obtained from the symbol $\mathfrak{n}_\minu$, by an algebraic procedure called the \emph{Tanaka prolongation}.  This goes as follows:

    We start with a symbol algebra $\mathfrak{n}_\minu$, which is a $p$-\emph{step nilpotent Lie algebra}, i.e. a real Lie algebra $\mathfrak{n}_\minu$, which is $p$-\emph{graded} in the sense that it is a \emph{direct sum} 
    $$\mathfrak{n}_\minu=\mathfrak{n}_{\minu p}\oplus\mathfrak{n}_{\minu (p\minu 1)}\oplus\cdots\oplus\mathfrak{n}_{\minu 1}$$
    of $p$ \emph{vector spaces} $\mathfrak{n}_{\minu j}$, $j=1,2,\dots,p$, and that it is equipped with a Lie bracket $[\cdot,\cdot]$, such that
    $$[\mathfrak{n}_{\minu i},\mathfrak{n}_{\minu j}]\subset\begin{cases}\mathfrak{n}_{\minu (i+j)}&\mathrm{if }\,\, 2\leq i+j\leq p\\\{0\}&\mathrm{if }\,\, p<i+j\end{cases}.$$
    The \emph{Tanaka prolongation of} $\mathfrak{n}_\minu$ is a graded Lie algebra given by a direct sum
    \begin{equation}
      \mathfrak{g}_T(\mathfrak{n}_\minu)=\mathfrak{n}_\minu\oplus\mathfrak{n}_0\oplus\mathfrak{n}_1\oplus\dots\oplus\mathfrak{n}_j\oplus\cdots,\label{gt1}\end{equation} with
      \begin{equation}\mathfrak{n}_k=\Big\{\bigoplus_{j<0}\mathfrak{n}_{k+j}\otimes\mathfrak{n}_j^*\ni A\,\,\mathrm{s.t.}\,\,A[X,Y]=[AX,Y]+[X,AY]\Big\}\label{gt}\end{equation}
    for each $k\geq 0$.
    In particular, $\mathfrak{n}_0$ is the Lie algebra of \emph{all derivations of} $\mathfrak{n}_\minu$ preserving its direct-sum-of-vector-spaces-$\mathfrak{n}_{\minu j}$ structure. Setting $[A,X]=AX$ for all $A\in \mathfrak{n}_k$ with $k\geq 0$ and for all $X\in\mathfrak{n}_\minu$ makes the condition in \eqref{gt} into the Jacobi identity. Moreover, if $A\in \mathfrak{n}_k$ and $B\in\mathfrak{n}_l$, $k,l\geq 0$, then their commutator $[A,B]\in\mathfrak{n}_{k+l}$ is defined on elements $X\in\mathfrak{n}_\minu$ inductively, according to the Jacobi identity. By this we mean that it should satisfy  
    $$[A,B]X=[A,BX]-[B,AX],$$
    which is sufficient enough to define $[A,B]$. 
    The Tanaka prolongation $\mathfrak{g}_T(\mathfrak{n}_\minu)$ is uniquely determined by the nilpotent Lie algebra $\mathfrak{n}_\minu$. It may happen that, given $\mathfrak{n}_\minu$, the sum in its Tanaka prolongation \eqref{gt1} is infinite. There are however $\mathfrak{n}_\minu$ for which the Tanaka prolongation is finite. In particular, there are known situations when the Tanaka prolongation
$$\mathfrak{g}=\mathfrak{g}_T(\mathfrak{g}_\minu)$$
of the $p$-step nilpotent part $$\mathfrak{g}_\minu=\mathfrak{g}_{\minu p}\oplus\dots\oplus\mathfrak{g}_{\minu 1}$$ is \emph{symmetric}, in the sense   
$$\mathfrak{g}_T(\mathfrak{g}_\minu)=\mathfrak{g}_{\minu p}\oplus\dots\oplus\mathfrak{g}_{\minu 1}\oplus\mathfrak{g}_0\oplus\mathfrak{g}_1\oplus\dots\oplus\mathfrak{g}_p,$$
with
$$\dim(\mathfrak{g}_{-k})=\dim(\mathfrak{g}_k), \quad k=1,2,\dots,p,$$
  and when the so defined Lie algebra $\mathfrak{g}_T(\mathfrak{g}_\minu)$ is \emph{simple}. In such case the Tanaka prolongation defines a \emph{gradation} of this simple Lie algebra, and the subalgebra $$\mathfrak{p}=\mathfrak{g}_0\oplus\mathfrak{g}_1\oplus\dots\oplus\mathfrak{g}_p,$$
  in such $\mathfrak{g}_T(\mathfrak{g}_\minu)$ is \emph{parabolic}.

The following theorem is due to Noboru Tanaka:
\begin{theorem}\label{tansym}
  Consider distribution structures $(M^N,H)$, with distributions $H$ defining a $p$-step filtration as in \eqref{pstep1}-\eqref{pstep2} and having the same constant symbol $\mathfrak{n}_\minu$. Then
  \begin{itemize}
  \item The most symmetric of all of these distribution structures is $(Nil, H={\mathcal D}^{-1})$, with $Nil$ being a nilpotent Lie group associated of the symbol algebra $\mathfrak{n}_\minu$, and with $H$ being the first component ${\mathcal D}^{-1}$ of the natural filtration on $Nil$ associated to the $p$-step grading in $\mathfrak{n}_\minu$.
  \item The Lie algebra of automorphisms $\mathfrak{aut}(H)$ of the flat model structure $(Nil, H={\mathcal D}^{-1})$ is isomorphic to the Tanaka prolongation $\mathfrak{g}_T(\mathfrak{n}_\minu)$ of the symbol algebra $\mathfrak{n}_\minu$,
$\mathfrak{aut}(H)\simeq \mathfrak{g}_T(\mathfrak{n}).$   
    \end{itemize}
\end{theorem}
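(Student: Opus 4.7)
The plan is to follow Tanaka's original strategy, which introduces a weight filtration on the Lie algebra of infinitesimal symmetries and identifies the graded pieces with the abstractly defined prolongation algebra $\mathfrak{g}_T(\mathfrak{n}_\minu)$. First I would fix a point $x\in M^N$ and, using the filtration $\mathcal{D}^{-1}\subset\cdots\subset\mathcal{D}^{-p}=\mathrm{T}M^N$, declare a vector field $Y$ to have weight $\geq k$ (for any integer $k$) if, for every $j\geq 1$ and every section $Z$ of $\mathcal{D}^{-j}$ near $x$, the bracket $[Y,Z]$ is a section of $\mathcal{D}^{-j+k}$ (with $\mathcal{D}^{-j+k}$ interpreted as $\{0\}$ when $-j+k\geq 0$ in the appropriate sense and as the full tangent bundle when $-j+k\leq -p$). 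This gives a decreasing filtration of the Lie algebra $\mathfrak{aut}(H)$ of infinitesimal symmetries, and the associated graded object $\operatorname{gr}\mathfrak{aut}(H)=\bigoplus_k \mathfrak{aut}(H)_k/\mathfrak{aut}(H)_{k+1}$ is itself a graded Lie algebra, since the weight assignment is compatible with Lie brackets of vector fields.

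Next I would identify the graded pieces algebraically. The leading symbol of a weight-$k$ symmetry $Y$ is, tautologically, a map $A:\mathfrak{n}_\minu\to\mathfrak{n}_\minu$ of degree $k$ that is a derivation of the symbol bracket, because $[Y,\cdot]$ satisfies the Leibniz rule with respect to iterated brackets and the symbol bracket is the class of $[\cdot,\cdot]$ modulo higher filtration. Comparing with \eqref{gt}, this gives an injective graded Lie algebra homomorphism $\operatorname{gr}\mathfrak{aut}(H)\hookrightarrow\mathfrak{g}_T(\mathfrak{n}_\minu)$. In negative degrees this is straightforward: every element of $\mathfrak{n}_{\minu k}$ tautologically appears as the symbol of some local section of $\mathcal{D}^{-k}$ near $x$ that normalises $H$, so the negative part is always realised in full. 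Consequently $\dim\mathfrak{aut}(H)\leq \dim\mathfrak{g}_T(\mathfrak{n}_\minu)$, which already proves the inequality statement in full generality and thereby identifies the flat model as the unique candidate for maximal symmetry.

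It remains to prove the surjectivity of the symbol map in the flat case, i.e.\ that every $A\in\mathfrak{n}_k$ with $k\geq 0$ is realised by a genuine infinitesimal symmetry on $Nil$. This is the main technical obstacle. Here I would exploit the homogeneity of $Nil$: left translations give symmetries realising all of $\mathfrak{n}_\minu$, and the grading element $E\in\mathfrak{n}_0$ (acting by $j\cdot\mathrm{id}$ on $\mathfrak{n}_{\minu j}$) integrates to a one-parameter family of dilations in the exponential coordinates on $Nil$, which are manifest symmetries of the filtration. For a general $A\in\mathfrak{n}_0$ one writes down explicitly, in exponential coordinates, a vector field of the form $\sum_j (AX_j)\partial/\partial x_j$ (corrected by lower-order terms dictated by the Baker--Campbell--Hausdorff formula), and verifies directly, using the derivation property of $A$, that the bracket with a left-invariant vector field in $\mathfrak{n}_{\minu j}^{Nil}$ lies in $\mathfrak{n}_{\minu j}^{Nil}\oplus\mathcal{F}(Nil)\cdot(\mathfrak{n}_\minu^{Nil})_{\geq -j+1}$, which is the infinitesimal symmetry condition modulo the filtration.

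Finally, for $k\geq 1$, I would argue by induction on $k$. Given $A\in\mathfrak{n}_k$, the compatibility condition in \eqref{gt} is precisely what is needed to solve, order by order in the filtration on $Nil$, the system of equations that expresses $[\widetilde{A},Z]=(AZ)\!\!\mod \mathcal{D}^{-j+k+1}$ for $Z\in\mathcal{D}^{-j}$; the Jacobi identity between $A$ and the already-constructed lower-weight symmetries ensures the consistency of the successive steps. Equivalently, one can construct $\widetilde{A}$ directly by using the full graded Lie algebra $\mathfrak{g}_T(\mathfrak{n}_\minu)$ to build a homogeneous space of which $Nil$ is the ``nilpotent part'', and then restricting the action. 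The hard part is verifying that no higher-order obstruction appears; this is where the formal \emph{definition} of $\mathfrak{n}_k$ as the space of Lie-algebra-valued maps satisfying the derivation identity pays off, because that identity is exactly the vanishing of the obstruction. Combining the injectivity from the second paragraph with this surjectivity yields $\mathfrak{aut}(H)\simeq\mathfrak{g}_T(\mathfrak{n}_\minu)$ on the flat model and confirms that $(Nil,\mathcal{D}^{-1})$ realises the maximal symmetry among all structures with symbol $\mathfrak{n}_\minu$.
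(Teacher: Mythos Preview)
The paper does not give a proof of this theorem; it is stated as a result due to Tanaka, with a citation to \cite{tanaka}, and used as a black box. Your outline is in the spirit of Tanaka's original argument, but the setup contains a genuine gap.

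The filtration you define is not the one that works. By your definition, $Y$ has weight $\geq 0$ precisely when $[Y,\mathcal{D}^{-j}]\subset\mathcal{D}^{-j}$ for all $j$, which (by induction from $j=1$) is exactly the condition that $Y$ be an infinitesimal symmetry of $H$. Since $\mathcal{D}^{-j}\subset\mathcal{D}^{-j-1}$, every symmetry then automatically has weight $\geq k$ for every $k\leq 0$, so with your filtration the negative graded pieces of $\mathfrak{aut}(H)$ all vanish and the map $\operatorname{gr}\mathfrak{aut}(H)\to\mathfrak{g}_T(\mathfrak{n}_\minu)$ never hits $\mathfrak{n}_\minu$. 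The filtration Tanaka actually uses is by \emph{weighted order of vanishing at $x$}: in exponential-type coordinates adapted to the grading one assigns weight $j$ to the coordinates along $\mathfrak{n}_{-j}$ and weight $-j$ to the corresponding coordinate vector fields, and filters $\mathfrak{aut}(H)$ by the lowest weight occurring in the Taylor expansion. With that filtration the negative part of $\operatorname{gr}\mathfrak{aut}(H)$ is the image of the evaluation map $\mathfrak{aut}(H)\to\mathrm{T}_xM$, graded by $\mathcal{D}^{-1}_x\subset\cdots\subset\mathcal{D}^{-p}_x$.

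Relatedly, your claim that ``the negative part is always realised in full'' for an arbitrary $(M,H)$ with symbol $\mathfrak{n}_\minu$ is false: a generic distribution with a prescribed constant symbol need not admit any nontrivial local symmetries, so the evaluation map can have image much smaller than $\mathfrak{n}_\minu$. What is true, and what suffices for the upper bound $\dim\mathfrak{aut}(H)\leq\dim\mathfrak{g}_T(\mathfrak{n}_\minu)$, is only the \emph{injection} $\operatorname{gr}\mathfrak{aut}(H)\hookrightarrow\mathfrak{g}_T(\mathfrak{n}_\minu)$. Surjectivity in all degrees, including the negative ones, is a special feature of the flat model (there it comes from left translations on $Nil$) and is part of what must be proved, not assumed. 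Your later paragraphs on constructing symmetries for $\mathfrak{n}_0$ and $\mathfrak{n}_{k\geq 1}$ on $Nil$ are on the right track, but they only make sense once the filtration is corrected.
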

Let us now return to Example \ref{ex1}.
\begin{example}
  Note that in Example \ref{ex1}, the vector fields $X_1,X_2,X_3,X_4$ and $X_5=\partial_5$, $X_6=\partial_6$, $X_7=\partial_7$ span a 2-step nilpotent Lie algebra
  $$\mathfrak{n}=\Span_\bbR(X_1,X_2,\dots, X_7)=\mathfrak{n}_{\minu 2}\oplus\mathfrak{n}_{\minu 1}.$$
  Here the graded components are
  $$\begin{aligned}
    \mathfrak{n}_{\minu 1}&=\Span_\bbR(X_1,X_2,X_3,X_4)\\
    \mathfrak{n}_{\minu 2}&=\Span_\bbR(X_5,X_6,X_7).\end{aligned}$$
  According to our discussion above, we can now built a flat model of rank 4 distributions $H$ in dimension 7 with symbol $\mathfrak{n}$. This model distribution can be identified with the original distribution $H$, as in \eqref{babyh} in Example \ref{ex1}. Using the commutation relations for the basis $(X_1,X_2,\dots,X_7)$ and the definition of the Tanaka prolongation discussed above, we find that the Tanaka prolongation for $\mathfrak{n}$ and, as a consequence, the Lie algebra of automorphisms of $H$ is,
  $$\mathfrak{aut}(H)=\mathfrak{g}_T(\mathfrak{n})=\mathfrak{n}_{\minu 2}\oplus\mathfrak{n}_{\minu 1}\oplus\mathfrak{n}_{0}\oplus\mathfrak{n}_1\oplus\mathfrak{n}_2,$$
  with the submodules $\mathfrak{n}_k$ of respective dimensions
  $$\begin{aligned}
   \dim( \mathfrak{n}_{\pm 2})=3\\
   \dim( \mathfrak{n}_{\pm 1})=4\\
    \dim(\mathfrak{n}_0)=7.
  \end{aligned}$$
  One can recognize that in this Tanaka prolongation of $\mathfrak{n}$ the homogeneity 0 component $\mathfrak{n}_0=\bbR\oplus 2\sua(2)$, and that the full Tanaka prolongation is $\mathfrak{g}_T(\mathfrak{n})\simeq\spa(1,2)$. This, via the Tanaka Theorem \ref{tansym}, confirms the claim that the local group of automorphisms of the distribution structure $(M,H)$ form Example \ref{ex1} is $\spg(1,2)$ as claimed before.
\end{example}
\subsection{Decorated distributions}
Distribution structures are perhaps the simplest geometric structures one can define on a smooth manifold. Note, for example that any smooth manifold is naturally equipped with an $m$-distribution structure $(M,H)$, where $m$ is the dimension of $M$ and the the distribution $m$-plane $H_p$ at each point $p\in M$ is the entire tangent space $\mathrm{T}_pM$ at $p$.

One obtains more exciting geometries when one \emph{decorates} a distribution structure $(M,H)$, or the distribution $H$, with the same kinds of \emph{geometric objects} at each point $p\in M$. Such objects can be, for example a \emph{metric} $g$ on $H$, or a \emph{skew symmetric} form $\omega$ on $H$, or more generally a tensor $\ten$, or families of tensors such as e.g. \emph{pencils} of tensors, on $M$. Then the distribution structure $(M,H)$ decorated in this way, is given by a \emph{triple} $(M,H,\ten)$, where $\ten$ is an appropriate object defined on $H$. Frequently some \emph{integrability conditions} for the decorating object field $\ten$ on $H$ are also required (see below). 

Given a distribution structure $(M,H,\ten)$ decorated by a \emph{tensor field} $\ten$ on $H$ one defines an (local) equivalence of two such decorated structures $(M,H_1,\ten_1)$ and $(M,H_2,\ten_2)$ on $M$, by saying that they are (locally) equivalent if they are (locally) equivalent as distribution structures and if the distribution structure equivalence diffeomorphism $\phi:M_1\to M_2$ transforms the tensor field $\ten_1$ from the first distribution $H_1$ to the corresponding tensor field $\ten_2$ on $H_2$, $\phi^*\ten_2=\ten_1$. Obviously, one can speak about the (local) group $G_\ten$ of automorphisms of a decorated distribution structure $(M,H,\ten)$, as well as about the corresponding Lie algebra $\mathfrak{g}_\ten$ of infinitesimal symmetries of $(M,H,\ten)$: this is generated by those vector fields $Y$ from the Lie algebra $\mathfrak{g}$ of infinitesimal symmetries of the distribution structure $(M,H)$, which additionally preserve $\ten$ on $H$. It is also obvious that in most of decorations the group $G_\ten$ will be a \emph{proper} subgroup of $G$, and the Lie algebra $\mathfrak{g}_\ten$ will be a \emph{proper} subalgebra of $\mathfrak{g}$. We further discuss more specific issues associated with the decorations of distribution structures on a \emph{particular class of decorations} $(M,H,J)$, which are termed \emph{Cauchy-Riemann structures}, or \emph{CR} structures, for short. 

\subsubsection{CR structures of type $(n,k)$} An \emph{almost} CR structure of \emph{CR dimension} $n$ and \emph{CR codimension} $k=(m-2n)$ on an $m$-dimensional manifold $M$ equipped with a bracket generating $(2n)$-distribution $H$ is a \emph{decoration} $(M,H,J)$ of the structure $(M,H)$ with a linear operator $J:H\to H$, such that $J^2=-\mathrm{id}_{|H}$. An almost CR structure of CR dimension $n$ and CR codimension $k$ is called a \emph{CR structure} of CR dimension $n$ and CR codimension $k$ if and only if the \emph{integrability conditions} are satisfied for the pair $(H,J)$, i.e. if and only if, for every two vector fields $X$ and $Y$ belonging to $H$, $X,Y\in H$, the following two conditions are satisfied:
\begin{itemize}
\item the difference of the commutators $[X,Y]$ and $[JX,JY]$ is in $H$, \begin{equation}\big([X,Y]-[JX,JY]\big)\in H;\label{ni1}\end{equation}
\item the \emph{vanishing Nijenhuis tensor condition} is satisfied, namely
  \begin{equation}J\big([X,Y]-[JX,JY]\big)=[JX,Y]+[X,JY].\label{ni2}\end{equation}
   \end{itemize}

For further use let us introduce a \emph{convenient terminology}: An (almost) CR structure $(M,H,J)$ of CR dimension $n$ and CR codimension $k$ will be called an (almost) \emph{CR structure of type} $(n,k)$. For these structures we obviously have that rank $\ell$ of $H$ is $\ell=2n$ and the dimension of the manifold $M$ is $m=2n+k$.

An almost CR structure of type $(n,k)$ can be also defined via the \emph{dual picture}, i.e. starting from the codistribution $H^\perp$ which is annihilated by $H$. For this, in the complexification $(\mathrm{T}^*M)^\bbC$ we need a subbundle $Z^*$ of \emph{complex} rank $(n+k)$ such that
$$(H^\perp)^\bbC\subset Z^*\subset(\mathrm{T}^*M)^\bbC\quad \mathrm{and}\quad  (H^\perp)^\bbC\dz Z^*\dz\bar{Z}^*=\wedge^{(2n+k)} (\mathrm{T}^*M)^\bbC;$$
here $\bar{Z}^*=\{(\mathrm{T}^*M)^\bbC\ni\mu\,\,\mathrm{s.t.}\,\,\bar{\mu}\in Z^*\}$. Now, $J$ is defined in $H$ by saying that it is a real operator in $H$ such that when it is complexified, it acts as $JZ=iZ$ on all vector fields $Z$ in the annihilator of $\bar{Z}^*$.

Since this is a bit complicated, let us return to our example \ref{ex1}.
\begin{example}\label{ex2} {\bf Continuation of Example \ref{ex1}}:
  In Example \ref{ex1} we have the distribution $H$ defined as the annihilator of $H^\perp$ spanned by the three forms $\lambda^1,\lambda^2,\lambda^3$ as given in \eqref{baby}.
  We define $Z^*=(H^\perp)^\bbC+W^*$
  with $W^*=\Span(\mu^1,\mu^2)$, where
  \begin{equation}
    \begin{aligned}
      \mu^1&=\der x^1+i\der x^4,\\
      \mu^2&=\der x^2-i\der x^3.
      \end{aligned}
    \end{equation}
  Here $i$ denotes the \emph{imaginary unit}, $i=\sqrt{-1}$, and $\Span$ is taken over the complex-valued functions on $M=\bbR^7$. This results in
  $$H^\perp\subset Z^*=\Span(\lambda^1,\lambda^2,\lambda^3,\mu^1,\mu^2),$$
  where again the $\Span$ is taken over the complex-valued functions in $\bbR^7$
  
  The \emph{flag} $(H^\perp)^\bbC\subset Z^*\subset (\mathrm{T}^*M)^\bbC$ defines $\bar{Z}^*=\Span(\lambda^1,\lambda^2,\lambda^3,\bar{\mu}^1,\bar{\mu}^2)$, with the `bar' operator on complex 1-forms denoting the usual complex conjugation, as for example in $\bar{\mu}^1=\der x^1-i\der x^4$. Furthermore, we have $$(H^\perp)^\bbC\dz Z^*\dz\bar{Z}^*=\Span(\der x^1\dz\der x^2\dz\der x^3\dz\der x^4\dz\der x^5\dz\der x^6\dz\der x^7)=\dz^7 (\mathrm{T}^*\bbR^7)^\bbC.$$
  Thus, according to the brief procedure above, we have an \emph{almost} CR structure of type $(2,3)$ in $M=\bbR^7$. To see how the \emph{complex structure operator in } $H$ looks like, note the following:
  
  The annihilator of $\bar{Z}^*$ is
  $$Z=\Span(Z_1,Z_2),$$
  with
  $$Z_1=X_1-i X_4\quad\mathrm{and}\quad Z_2=X_2+iX_3,$$
  where we have used the basis $(X_1,X_2,X_3,X_4)$ for $H$ with the vector fields $X_i$ as defined in \eqref{baby*}. We therefore have $JZ_1=iZ_1$ and $JZ_2=iZ_2$. Looking at the real and imaginary parts of these equations we find out the following action of the \emph{real} operator $J$ on the basis $(X_1,X_2,X_3,X_4)$ of $H$:
  $$JX_1=X_4, \quad JX_2=-X_3, \quad JX_3=X_2,\quad JX_4=-X_1.$$
  It is visible that $J^2=-id$ on $H$, so we really have an \emph{almost} CR structure $(\bbR^7,H,J)$ of type $(2,3)$ in $\bbR^7$.

  \emph{Interestingly} this \emph{almost} CR structure is actually an \emph{honest CR structure} of type $(2,3)$. One easily checks that the Nijenhuis integrability conditions \eqref{ni1}-\eqref{ni2} are satisfied.
\end{example}

Returning to the general case of an \emph{almost} CR structure of type $(n,k)$ we note the following usefulness of our formulation of a CR decoration of a distribution $H$ in terms of the flag $(H^\perp)^\bbC\subset Z^*\subset (\mathrm{T}^*M)^\bbC$. The Nijenhuis integrability conditions \eqref{ni1}-\eqref{ni2} of an almost CR structure defined on $M$ by such a flag are \emph{equivalent} to a single condition:\\
\centerline{\emph{differentials of all forms from $Z^*$ are in the ideal}} \
\centerline{\emph{generated by forms from $Z^*$.}}
The \emph{ideal} here is in the Grassmann algebra of all (complex-valued) skew symmetric forms on $M$, whose multiplication is the wedge product of forms. 

This leads to the following operational and easy to use way of checking the Nijenhuis conditions \eqref{ni1}-\eqref{ni2} for $(M,H,J)$:

Let $(\lambda^1,\lambda^2,\dots,\lambda^k)$ be a basis of 1-forms in $H^\perp$, and let $(\lambda^1,\lambda^2,\dots,\lambda^k,\mu^1,\mu^2,\dots,$ $\mu^n)$ be its extension to a basis of 1-forms in $Z^*$. Then the almost CR structure $(M,H,J)$ corresponding to the flag $(H^\perp)^\bbC\subset Z^*\subset (\mathrm{T}^*M)^\bbC$ satisfies the Nijenhuis integrability conditions \eqref{ni1}-\eqref{ni2} if and only if
\begin{equation}
  \begin{aligned}
    \der \lambda^i\dz\lambda^1\dz\lambda^2\dz\dots\dz\lambda^k\dz\mu^1\dz\mu^2\dz\dots\dz\mu^n=0, \quad&\mathrm{for\,\,all}\quad i=1,2,\dots, k,\\\mathrm{and}\quad\quad\quad\quad\quad\quad\quad&\\
    \der \mu^\alpha\dz\lambda^1\dz\lambda^2\dz\dots\dz\lambda^k\dz\mu^1\dz\mu^2\dz\dots\dz\mu^n=0, \quad&\mathrm{for\,\,all}\quad \alpha=1,2,\dots,n.
    \end{aligned}
  \end{equation}
In particular, using these conditions, one can easily check that the almost CR structure from Example \ref{ex2} is an integrable CR structure of type $(2,3)$ on $M=\bbR^7$.

Using the flag $(H^\perp)^\bbC\subset Z^*\subset (\mathrm{T}^*M)^\bbC$ formulation of the concept of an almost CR structure, we can also easily define the concept of (local) equivalence of almost CR structures and the concept of (local) symmetry group:

We say that two almost CR structures given by the respective two flags $(H_1^\perp)^\bbC\subset Z_1^*\subset (\mathrm{T}^*M)^\bbC$ and $(H_2^\perp)^\bbC\subset Z_2^*\subset (\mathrm{T}^*M)^\bbC$ on $M$ are (locally) equivalent, if and only if there exists a (local) diffeomorphism $\phi:M\to M$ transforming one flag to the other, i.e. such that 
$$\phi^*\big((H_2^\perp)^\bbC\subset Z_2^*\subset (\mathrm{T}^*M)^\bbC\big)=(H_1^\perp)^\bbC\subset Z_1^*\subset (\mathrm{T}^*M)^\bbC.$$
We further say that a (local) diffeomorphism $\phi:M\to M$ is a (local) \emph{CR automorphism} for an almost CR structure $(M,H,J)$ defined by $(H^\perp)^\bbC\subset Z^*\subset (\mathrm{T}^*M)^\bbC$ if and only if it satisfies
$$\phi^*\big((H^\perp)^\bbC\subset Z^*\subset (\mathrm{T}^*M)^\bbC\big)=(H^\perp)^\bbC\subset Z^*\subset (\mathrm{T}^*M)^\bbC.$$
Note that, in terms of the bases $(\lambda^1,\lambda^2,\dots,\lambda^k)$ in $H^\perp$ and $(\lambda^1,\lambda^2,\dots,\lambda^k,\mu^1,\mu^2,\dots,\mu^n)$ in $Z^*$, the (local) CR automorphism can be equivalently defined as a (local) diffeomorphism $\phi:M\to M$ satisfying: 
$$
  \begin{aligned}
    &\phi^* \big(\lambda^i\big)\dz\lambda^1\dz\lambda^2\dz\dots\dz\lambda^k=0, \quad&\mathrm{for\,\,all}\quad i=1,2,\dots k,\\&\quad\quad\mathrm{and}\\
    &\phi^*\big(\mu^\alpha\big)\dz\mu^1\dz\mu^2\dz\dots\dz\mu^n\dz\lambda^1\dz\lambda^2\dz\dots\dz\lambda^k=0, \quad&\mathrm{for\,\,all}\quad \alpha=1,2,\dots n.
  \end{aligned}
  $$
Consequently, the (local) \emph{group} $G_J$ \emph{of CR automorphisms} consists of all such $\phi$'s on $(M,H,J)$ with the composition of CR automorphisms as the group multiplication. If the group $G_J$ of CR automorphisms acts \emph{transitively} on $M$ then (locally) $M=G_J/P$, with isotropy $P$, and the CR structure $(M,H,J)$ is called \emph{homogeneous}.

The \emph{infinitesimal symmetries of an almost CR structure} (infinitesimal CR automorphisms) are \emph{real} vector fields $Y$ on $M$ which preserve the flag $(H^\perp)^\bbC\subset Z^*\subset (\mathrm{T}^*M)^\bbC$. Their most convenient equivalent definition is again given in terms of the bases $(\lambda^1,\lambda^2,\dots,\lambda^k)$ in $H^\perp$ and $(\lambda^1,\lambda^2,\dots,\lambda^k,\mu^1,\mu^2,\dots,\mu^n)$ in $Z^*$. We note that a vector field $Y$ on $M$ is an infinitesimal symmetry of an almost CR structure given on $M$ by $(H^\perp)^\bbC\subset Z^*\subset (\mathrm{T}^*M)^\bbC$ if and only if $Y$ satisfies the following PDEs with respect to the bases $(\lambda^1,\lambda^2,\dots,\lambda^k)$ in $H^\perp$ and $(\lambda^1,\lambda^2,\dots,\lambda^k,\mu^1,\mu^2,\dots,\mu^n)$ in $Z^*$:
\begin{equation}
  \begin{aligned}
&\big({\mathcal L}_Y\lambda^i\big)\dz\lambda^1\dz\lambda^2\dz\dots\dz\lambda^k=0, \quad\mathrm{for\,\,all}\quad i=1,2,\dots k,\\&\quad\quad\mathrm{and}\\
    &\big({\mathcal L}_Y\mu^\alpha\big)\dz\mu^1\dz\mu^2\dz\dots\dz\mu^n\dz\lambda^1\dz\lambda^2\dz\dots\dz\lambda^k=0, \quad\mathrm{for\,\,all}\quad \alpha=1,2,\dots n.
    \end{aligned}\label{sycr}
\end{equation}
Of course the \emph{Lie algebra} $\mathfrak{g}_J$ \emph{of infinitesimal symmetries} of $(M,H,J)$ is an $\bbR$-linear span of the above vector fields $Y$
with the Lie bracket of vector fields as the Lie algebra $\mathfrak{g}_J$ bracket.

It is worthwhile to note that the basis related \emph{flag} $(H^\perp)^\bbC\subset Z^*\subset (\mathrm{T}^*M)^\bbC$ definition of an (almost) CR structure $(M,H,J)$ is particularly useful when we want to \emph{compare}
the geometry of a structure of a naked distribution $(M,H)$ with that of a decorated distribution structure $(M,H,J)$. For example the Lie algebra of the infinitesimal symmetries of the naked $(M,H)$ structure consists
of real vector fields $Y$ on $M$ such that
\begin{equation}
  \begin{aligned}
    &\big({\mathcal L}_Y\lambda^i\big)\dz\lambda^1\dz\lambda^2\dz\dots\dz\lambda^k=0, \quad&\mathrm{for\,\,all}\quad i=1,2,\dots k,\label{syd}
  \end{aligned}
  \end{equation}
holds,
whereas the Lie algebra of infinitesimal symmetries of an (almost) CR structure $(M,H,J)$ decorating the structure $(M,H)$ will be its subalgebra which, in addition to \eqref{syd}, satisfies also the second part of integrability conditions from \eqref{sycr}, namely all conditions involving $({\mathcal L}_Y\mu^\alpha)$s. We will return to this (trivial) observation in Section \ref{acci}, where it will be crucial to motivate our main object of study in this paper.
\begin{example}\label{ex3} {\bf Continuation of Examples \ref{ex1} and \ref{ex2}}:
%%/home/pawel/niebieski/distribution_C4/babyCartan_paper_with_firstCR.mw
  In Example \ref{ex1} we have a distribution structure $(M,H)$ on $M=\bbR^7$ given in terms of the \emph{real} bundle $H^\perp=\Span(\lambda^1,\lambda^2,\lambda^3)$ as in \eqref{baby}. And in Example \ref{ex2} we decorated it with an almost CR structure $(M,H,J)$ by complexifying $H^\perp$ to $(H^\perp)^\bbC$ and by considering $Z^*=\Span_\bbC(\lambda^1,\lambda^2,\lambda^3,\mu^1,\mu^2)$. As we noticed already the almost CR structure $(M,H,J)$ defined by $(H^\perp)^\bbC\subset Z^*\subset (\mathrm{T}^*M)^\bbC$ is \emph{integrable}, which can be easily checked by seeing that
  $$\begin{aligned}
    \der \lambda^1\dz\lambda^1\dz\lambda^2\dz\lambda^3\dz\mu^1\dz\mu^2=0,\\
    \der \lambda^2\dz\lambda^1\dz\lambda^2\dz\lambda^3\dz\mu^1\dz\mu^2=0,\\
    \der \lambda^3\dz\lambda^1\dz\lambda^2\dz\lambda^3\dz\mu^1\dz\mu^2=0,\\
    \der \mu^1\dz\lambda^1\dz\lambda^2\dz\lambda^3\dz\mu^1\dz\mu^2=0,\\
    \der \mu^2\dz\lambda^1\dz\lambda^2\dz\lambda^3\dz\mu^1\dz\mu^2=0.
  \end{aligned}$$
  Furthermore, we know that the structure $(M,H)$ has the 21-dimensional group $\spg(1,2)$ as the group of its local symmetries, with the 21-dimensional Lie algebra $\mathfrak{g}$ of infinitesimal symmetries $Y_\nu$, $\nu=1,2,\dots,21$, as in Example \ref{ex1}. By solving the \emph{CR symmetry} equations (\ref{sycr}) for the CR structure $(M,H,J)$, namely by solving the equations
  \begin{equation}\begin{aligned}
    &\big({\mathcal L}_Y\lambda^1\big)\dz\lambda^1\dz\lambda^2\dz\lambda^3=0, \\
    &\big({\mathcal L}_Y\lambda^2\big)\dz\lambda^1\dz\lambda^2\dz\lambda^3=0, \\
    &\big({\mathcal L}_Y\lambda^3\big)\dz\lambda^1\dz\lambda^2\dz\lambda^3=0, \\
    &\big({\mathcal L}_Y\mu^1\big)\dz\mu^1\dz\mu^2\dz\lambda^1\dz\lambda^2\dz\lambda^3=0,\\
    &\big({\mathcal L}_Y\mu^2\big)\dz\mu^1\dz\mu^2\dz\lambda^1\dz\lambda^2\dz\lambda^3=0,
  \end{aligned}\label{sycr7}\end{equation}
  or by choosing only these symmetries $Y_\nu$, $\nu=1,2,\dots,21$, of $(M,H)$, which in addition to the first three equations in \eqref{sycr7}, also satisfy the last two, we \emph{find that the Lie algebra} $\mathfrak{g}_J$ \emph{of infinitesimal CR symmetries of} $(M,H,J)$ \emph{has dimension} $\dim(\mathfrak{g}_J)=12$, only. So the dimension of the Lie algebra $\mathfrak{g}$ of symmetries of the distribution structure  \emph{drops from 21 to the dimension 12} of the Lie algebra $\mathfrak{g}_J$ of infinitesimal symmetries of the decorated structure $(M,H,J)$. In particular, the infinitesimal distribution structure symmetry generator $Y_{10}$ from Example \ref{ex1} does \emph{not} satisfies the CR symmetry equations 
    $$\begin{aligned}
    &\big({\mathcal L}_Y\mu^1\big)\dz\mu^1\dz\mu^2\dz\lambda^1\dz\lambda^2\dz\lambda^3=0,\\
    &\big({\mathcal L}_Y\mu^2\big)\dz\mu^1\dz\mu^2\dz\lambda^1\dz\lambda^2\dz\lambda^3=0,
  \end{aligned}
  $$
  and thus it is \emph{not} a \emph{CR symmetry} generator. On the other hand the distribution structure symmetry $Y_{12}$ from Example \ref{ex1} satisfies all five conditions for being the CR symmetry, and generates a local \emph{CR automorphism}. So in this example we see explicitly \emph{the typical situation}, in which the symmetry algebra $ \mathfrak{g}_J$ of a decorated structure $(M,H,J)$, is a \emph{proper} subalgebra of the algebra $\mathfrak{g}$ of symmetries of the naked structure $(M,H)$.  
  \end{example}
\subsubsection{Embedding a CR structure of type $(n,k)$ in $\bbC^{(n+k)}$} Let us, from now on, assume that all our CR structures are \emph{real analytic}.

Consider then a CR structure $(M,H,J)$ of type $(n,k)$ defined on a $(2n)$-distribution $H$ via the flag $(H^\perp)^\bbC\subset Z^*\subset (\mathrm{T}^*M)^\bbC$. Let $(\lambda^1,\lambda^2,\dots,\lambda^k)$ and $(\lambda^1,\lambda^2,\dots,\lambda^k,\mu^1,\mu^2,$ $\dots,\mu^n)$ be the respective bases in $H^\perp$ and $Z^*$. A \emph{CR function} $z:M\to \bbC$ is a complex-valued differentiable function on $M$ such that
$$\der z\dz \lambda^1\dz\lambda^2\dz\dots\dz\lambda^k\dz\mu^1\dz\mu^2\dz\dots\dz\mu^n=0.$$
We say that CR functions  $z_1,z_2,\dots,z_j$ are \emph{functionally independent} in an open set $\mathcal U$ if and only if
$$\der z_1\dz\der z_2\dz\dots\dz\der z_j\neq 0\quad\quad\mathrm{in}\quad {\mathcal U}.$$
It is well known \cite{andreotti} that an \emph{analytic} CR structure of type $(n,k)$ always admits $(n+k)$ independent CR functions $z_1,z_2,\dots, z_{(n+k)}$. They provide a local \emph{embedding}
\begin{equation}
  M\ni p\stackrel{\iota}{\longrightarrow} \big(z_1(p),z_2(p),\dots, z_{(n+k)}(p)\big)\in\bbC^{(n+k)}.\label{emb}\end{equation}
Once a CR structure $(M,H,J)$ is embedded like that it provides a CR structure  $\big(\iota(M),\iota_*H,\iota^*J\big)$ of type $(n,k)$ embedded as a submanifold of real codimension $k$ in $\bbC^{(n+k)}$. As a real submanifold it also acquires a CR structure of type $(n,k)$ from the canonical complex structure $I$ (multiplication by an imaginary unit) in the ambient space $\bbC^{(n+k)}$. This is defined by noting that $\iota_*H=I\mathrm{T}(\iota(M))\cap \mathrm{T}(\iota(M))$ and that $\iota^*J=I_{|\iota_*H}$.  Thus both these CR structures on $\iota(M)$ are CR equivalent, and are equivalent to the original abstract CR structure $(M,H,J)$ on $M$. Therefore knowing $(n+k)$ independent CR functions of $(M,H,J)$ we have a nice model of an abstract CR structure: one embeds it by \eqref{emb} as a real submanifold of higher codimension and gets the CR structure on it from the ambient complex space $\bbC^N$.

A \emph{particular class} of embedded CR manifolds $M^{2n+k}\subset\bbC^{(n+k)}$ can be defined in terms of \emph{graphs} of $k$ \emph{real} functions $\Phi^i=\Phi^i(z_1,z_2,\dots,z_n,$ $\bar{z}_1,\bar{z}_2,\dots,\bar{z}_n)$, $i=1,2,\dots,k$ such that $\partial\Phi^1\dz\partial\Phi^2\dz\dots\dz\partial\Phi^k\neq 0$, 
where the linear differential operators $\partial$ and $\bar{\partial}$ act on real-valued differentiable functions $f=f(z,\bar{z})$ as \[\partial f=\sum_{\alpha=1}^n\frac{\partial f}{\partial z_\alpha}\der z_\alpha\,\,\mathrm{and}\,\,
\bar{\partial} f=\sum_{\alpha=1}^n\frac{\partial f}{\partial \bar{z}_\alpha}\der \bar{z}_\alpha.\]
To describe embedded CR manifolds, let us denote holomorphic coordinates in $\bbC^{n+k}$ by $(w,z)=(w_1,w_2,\dots,w_k,z_1,z_2,\dots,z_n)$. Then the CR structures from this particular class are defined by the following embeddings:
\begin{equation} M^{2n+k}\,=\,\{\, \bbC^{n+k}\ni (w,z)\,\, s.t.\,\, Im(w_i)-\Phi^i(z,\bar{z})=0\,\,\,\,\,\forall i=1,2,\dots k\,\}.\label{rigid}\end{equation}
In such case the distribution $H$ on $M^{2n+k}$ is defined as the annihilator of $k$ real 1-forms
$$\lambda^i=\der Re(w_i)+i\big(\bar{\partial}-\partial\big)\Phi^i,\,\,\, i=1,2,\dots,k,$$
Thus in such case we have \[H=\{\mathrm{T}M^{2n+k}\ni X\,\,\mathrm{s.t.}\,\,X\hook \lambda^i=0\,\,\,\forall i=1,2,\dots,k\}.\]
Writing $w_i$s and $z_i$s in terms of their real and imaginary parts,
$w_i=u^i+iv^i$, $z_\alpha=x^\alpha+i y^\alpha$, we then have that $\Phi^i=\Phi^i(x^1,x^2,\dots,x^n,y^1,y^2,\dots,y^n)$ and, in particular, that
\begin{equation}\lambda^i=\der u^i+\sum_{\alpha=1}^n\Big(\frac{\partial\Phi^i}{\partial x^\alpha}\der y^\alpha-\frac{\partial\Phi^i}{\partial y^\alpha}\der x^\alpha\Big),\quad\,\,\forall i=1,2,\dots,k.\label{lambda}\end{equation}
Now the CR manifold $M^{2n+k}$ can be conveniently parametrized by $(2n+k)$ real parameters $(u^i,x^\alpha,y^\alpha)$ in which the distribution $H$ is spanned as $$H=\Span(X_\alpha,Y_\alpha)$$
by the $2n$ real vector fields $X_\alpha$ and $Y_\alpha$ given by
$$X_\alpha=\partial_{x^\alpha}+\sum_{i=1}^k\frac{\partial \Phi^i}{\partial y^\alpha}\partial_{u^i},\quad\quad Y_\alpha=\partial_{y^\alpha}-\sum_{i=1}^k\frac{\partial \Phi^i}{\partial x^\alpha}\partial_{u^i},\quad \alpha=1,2,\dots, n.$$
This $H$ is invariant with respect to the complex structure $I$ from the ambient complex space $\bbC^{n+k}$. Indeed, it follows that $IX_\alpha=Y_\alpha$ and $IY_\alpha=-X_\alpha$ for all $\alpha=1,2,\dots, n$. It also follows that the bundle $Z$ is spanned as
$$Z=\Span(Z_\alpha)$$
by the $\alpha=1,2,\dots,n$ complex-valued vector fields
$$Z_\alpha=\tfrac12(X_\alpha-iY_\alpha)=\partial_{z_\alpha}+i\sum_{i=1}^k\frac{\partial\Phi^i}{\partial z_\alpha}\partial_{u^i},$$
and \emph{is} automatically \emph{integrable} since $[Z_\alpha,Z_\beta]=0$ for all $\alpha,\beta=1,2,\dots,n$.

The complex bundle $Z^*$ is spanned over the complex functions by all $\lambda^i$'s and $\mu^\alpha$'s where $\mu^\alpha=\der z^\alpha$, $\alpha=1,2,\dots,n$.

%In the CR literature these simplest embedded CR manifolds, as defined in \eqref{rigid}, are called \emph{rigid}\footnote{This name should not be confused with the notion of a \emph{rigid parabolic geometry}! We were not the ones, who coined the unwise term `rigid' for such embedded CR manifolds. As a consequence, our \emph{rigid CR manifolds} from Theorems \ref{e3}, \ref{e2}, \ref{so} and \ref{so*} are flat models for \emph{parabolic geometries} which \emph{are not rigid} (!).}. We note that \emph{all examples of CR structures} $(M,H,J)$ \emph{included in this paper} are locally CR equivalent to the CR manifolds described by the embeddings \eqref{rigid}.

\begin{example}\label{ex4} {\bf Continuation of Example \ref{ex2}}: One easily  checks that the CR functions equation
  %/home/pawel/niebieski/notebooks/merker/DennyZaohu/sp_baby_cartan_paper_embedding.nb
  $$\der z\dz\lambda^1\dz\lambda^2\dz\lambda^3\dz\mu^1\dz\mu^2=0$$
  written in terms of the generators $(\lambda^1,\lambda^2,\lambda^3,\mu^1,\mu^2)$ of $Z^*$ for the CR structure $(M,H,J)$ of Example \ref{ex2} has the following \emph{five} independent solutions:
  $$
  \begin{aligned}
    &w_1=x^1+i x^4,\quad w_2=x^2-i x^3,\quad w_3=x^5+\tfrac{i}{2}\big((x^1)^2-(x^2)^2\big),\\&\quad\quad\quad\quad z_1=x^6+i x^1 x^3,\quad z_2=x^7+i(x^2-i x^3)x^2.
  \end{aligned}
  $$
Eliminating variables $x^1,x^2,x^3,x^4,x^5,x^6,x^7$ from $(w_1,w_2,w_3,z_1,z_2)$ we get a real codimension $k=3$ submanifold
  $$
  \iota(M)=\left\{\begin{aligned}\bbC^5&\ni(w_{1},w_{2},w_{3},z_{1},z_{2})\,\,s.t.\\
  &Im(w_1)=\tfrac12\big((Re(z_1))^2-(Re(z_2))^2\big)\\
  &Im(w_2)=-Re(z_1)Im(z_2)\\
  &Im(w_3)=Re(z_1)Re(z_2)\end{aligned}\right\}\subset \bbC^5,
  $$
  which gives a CR embedding of $(\iota(M),H,J)$ in $\bbC^5$.
  %../babyCartan_paper_with_firstCR_CRfunctions_after_embedding
  According to our formulas above, this $(\iota(M),H,J)$ CR structure, provides a distribution structure $(\iota(M),H)$ having $H^\perp=\Span(\lambda^1,\lambda^2,\lambda^3)$ given by the 1-forms $\lambda^1,\lambda^2,\lambda^3$ 
$$\begin{aligned}
    \lambda^1=\der u^1+x^1\der y^1-x^2\der y^2,\\
    \lambda^2=\der u^2-y^2\der y^1+x^1\der x^2,\\
    \lambda^1=\der u^3+x^2\der y^1+x^1\der y^2.\end{aligned}$$
    We leave to the reader to check that this distribution structure on $\bbR^7$ with coordinates $(x^1,y^1,x^2,y^2,u^1,u^2,u^3)$ is locally equivalent to our distribution structure $(M,H)$ from Example \ref{ex1}. We also leave to the reader to check that the CR structure on the embedded manifold $\iota(M)$ given by the flag $(H^\perp)^\bbC\subset Z^*\subset(\mathrm{T}^*M)^\bbC$ in which $(H^\perp)^\bbC$ is the complexification of the above real $H^\perp$, and the complex bundle $Z^*$ is $Z^*=(H^\perp)^\bbC+\Span(\mu^1,\mu^2)$ with $\mu^1=\der x^1+i \der y^1$ and $\mu^2=\der x^2+i\der y^2$, is locally CR equivalent to the CR structure $(M,H,J)$ from Example \ref{ex2}.  
  \end{example}

\subsection{Accidental decorations}\label{acci}
The CR decoration $(M,H,J)$ in Example \ref{ex2}, which was put on the distribution structure $(M,H)$ from Example \ref{ex1} was by no means canonical. Let us consider the following deformation of the CR structure $(M,H,J)$ defined, on the distribution structure $(M,H)$ from Example \ref{ex1}, in Example \ref{ex2}.
\begin{example}\label{ex5}
 Let us return to Example \ref{ex2} and consider three real \emph{constants} $a,b,c$ such that \begin{equation} a^2+b^2+c^2=1.\label{abc}\end{equation}
If $b^2+c^2\neq 0$, instead of $Z^*$ from example \ref{ex2}, we take $Z^*_{(a,b,c)}=(H^\perp)^\bbC+W^*_{(a,b,c)}$, with $W^*_{(a,b,c)}$ spanned by the 1-forms
$$\begin{aligned}
  \mu^1&=\der x^1-i a\der x^2-ic\der x^3+ib\der x^4,\\
  \mu^2&=ia\der x^1+\der x^2-ib\der x^3-ic\der x^4.
\end{aligned}$$
If $b=c=0$, we take $Z^*_{(a,0,0)}=(H^\perp)^\bbC+W^*_{(a,0.0)}$ with $W^*_{(a,0,0)}$ spanned by the 1-forms
$$\begin{aligned}
  \mu^1&=\der x^1-i a \der x^2,\\
  \mu^2&=\der x^3+i a\der x^4.\end{aligned}\quad\quad\quad a=\pm1,$$
Then it follows that \emph{every choice} of three constants $(a,b,c)$ as in \eqref{abc}, via the flag $(H^\perp)^\bbC\subset Z^*_{(a,b,c)}\subset (\mathrm{T}^*M)^\bbC$, decorates the distribution $H$ from Example \ref{ex1} with an \emph{integrable} CR structure $(M,H,J_{(a,b,c)})$. The one from Example \ref{ex2} corresponds to the choice $(a,b,c)=(0,1,0)$. 
%Moreover, each of the CR structures from the 2-sphere $a^2+b^2+c^2=1$ also has 12-dimensional symmetry algebra $\mathfrak{g}_{J_{(a,b,c)}}$, each one isomorphic to $\mathfrak{g}_J$ corresponding to the choice $(a,b,c)=(0,1,0)$.
This shows that a distribution structure $(M,H)$ may have \emph{many integrable} CR decorations.

We further note that \emph{all} CR structures $(M,H,J_{(a,b,c)})$ have the local group of CR automorphisms $G_{J_{(a,b,c)}}$ with symmetry dimension \emph{not} larger than 12. So we are in the \emph{typical} situation: the dimension 12 of the local symmetry group $G_{J_{(a,b,c)}}$ of the CR structure $(M,H,J_{(a,b,c)})$ is \emph{smaller} than the dimension 21 of the local symmetry group $G$ of the naked distribution structure $(M,H)$.  
\end{example}
In this paper we focus on quite different situations. Roughly, we aim to give examples of distribution structures $(M,H)$, which are such that they \emph{canonically define} nontrivial decorations on $H$. We want that the geometry of the distribution structure alone imposes, \emph{without any additional input}, some decoration. Such decorations we will call \emph{accidental}.

Leaving the precise formulation of the notion of an accidental structure in the most general situations to subsequent studies, here in this paper, we will focus on \emph{accidental CR structures} $(M,H,J)$ \emph{on homogeneous distribution structures} $(M,H)$.

Recall that a typical situation for the respective groups of symmetries $G$ and $G_J$ of a distribution structure $(M,H)$ and a CR structure $(M,H,J)$ is that $G_J\subsetneq G$. The notion of an accidental $(M,H,J)$ requires equality here: 

\begin{definition}\label{rrr}
A CR structure $(M,H,J)$ on a \emph{homogeneous} distribution structure $(M,H)$ is called \emph{accidental} if and only if the group $G_J$ of CR automorphisms of the CR structure $(M,H,J)$ is \emph{equal} to the group $G$ of symmetries of the distribution structure $(M,H)$.  
  \end{definition}

In case of $(M,H)$ with an accidental CR structure $(M,H,J)$ the addition of the complex structure $J$ to $(M,H)$ does not diminish the symmetry of $(M,H)$. Saying it differently, the complex structure $J$ is in a way compatible with the distribution structure, or is somehow cannonically defined by it.

Our two $E_6$ homogenous $CR$ structures from the introduction are examples of \emph{accidental CR structures}. In the next sections we present many more of them. Actually we provide a \emph{full list of accidental CR structures} $(M,H,J)$ for which the distribution $H$ satisfies
\begin{itemize}
\item $[H,H]+H=\mathrm{T}M$ and
\item the geometry of $(M,H,J)$ is a flat model for a parabolic geometry associated with the distribution $H$.
\end{itemize}

On the other hand, there are many \emph{non}accidental homogeneous CR structures. First, all the hypersurface type CR structures, i.e. real hypersurfaces in $\bbC^N$ which acquire their CR structure from the ambient complex space and have nondegenerate Levi form, are \emph{not} accidental: Their \emph{distribution structure} $(M,H)$ is a \emph{contact structure} and as such \emph{has infinite dimensional group} $G$ of local symmetries; in contrast \emph{the group} $G_J$ \emph{of local CR automorphisms} for them \emph{is} always \emph{finite dimensional}.  As our Example \ref{ex5} shows there is also plenty of nonaccidental homogeneous CR structures of higher codimension. 

We emphasize that the notion of an \emph{accidental structure} on a distribution is not solely reserved to the  integrable (or almost) CR structures. This however goes beyond the present work, and we move discussions of this issue to a subsequent paper.
%\edz{This paragraph is added}
\subsection{A note on a method for approaching the problem}\label{approach}
The restriction of our considerations to accidental CR structures $(M,H,J)$ on distribution structures $(M,H)$ being flat models for parabolic geometries, makes the task of finding a full list of them to be manageable. This is due to the following;

First, \emph{all parabolic geometries are classified}, so one way of finding the suspects, is to search within the available lists.

Second, the \emph{accidental} feature of the objects we search for, requires that the symbol algebra of the searched distribution structures is such that its $\mathfrak{n}_{\minu 1}$ part is \emph{naturally} equipped with an almost structure $J$. This is because this $J$ should naturally induce an almost complex structure $J^{Nil}$ on $H=\mathfrak{n}^{Nil}_{\minu 1}$, which in turn would define the flat model distribution $H$ on the group $Nil$. This immediately \emph{excludes} all nilpotent Lie algebras with \emph{odd} dimension of $\mathfrak{n}_{\minu 1}$.

Actually the situation is much better: restricting to the nilpotent $p$-step Lie algebras with \emph{even} dimension of $\mathfrak{n}_{\minu 1}$, if a $J$ in $\mathfrak{n}_{\minu 1}$ was \emph{not} natural, and we would consider a corresponding almost CR structure $(Nil,H=\mathfrak{n}^{Nil}_\minu,J^{Nil})$ on $Nil$, the symmetry of this almost CR structure would be smaller that the symmetry of $(Nil,H=\mathfrak{n}^{Nil}_\minu)$, since the Tanaka prolongation of $(\mathfrak{n}_{\minu},J)$ would not only preserve the strata $\mathfrak{n}_{k-}$ in $\mathfrak{n}_\minu$, but also $J$ in $\mathfrak{n}_{\minu 1}$. This would chop $\mathfrak{n}_0$  from the situation without $J$ in $\mathfrak{n}_\minu$. It would chop it to a smaller $\mathfrak{n}^J_0$, making the resulting new Tanaka prolongation $\mathfrak{g}^J_T(\mathfrak{n}_\minu)$ smaller than $\mathfrak{g}_T(\mathfrak{n}_\minu)$. For us the crucial information is that in such situation the new $\mathfrak{n}^J_0$ \emph{should preserve} $J$, in the natural adjoint action of $\mathfrak{n}^J_o$ in $\mathfrak{n}_{\minu 1}$ given by the Lie bracket in  $\mathfrak{g}^J_T(\mathfrak{n}_\minu)$. Therefore $\mathfrak{n}^J_0$ should naturally be a \emph{subalgebra of a unitary algebra} $\sua(\mathfrak{n}_{\minu 1})$ for $J$.

This last observation forces us to restrict to the nilpotent Lie algebras, which on top of producing flat models of parabolic geometries, should have, in their Tanaka prolongation, 
their {\em $\mathfrak{n}_0$ part as a subalgebra of a unitary algebra acting in $\mathfrak{n}_{\minu 1}$}. This selects quite a small subset of all $\mathfrak{n}$'s defining gradations in simple Lie algebras. The addition of the requirement that the searched $\mathfrak{n}$'s must be 2-step graded, finishes the job, and one gets the full list. Note, that by this algebraic approach, we are guaranteed that we found accidental \emph{almost} CR manifolds. Interestingly, all the found ones, the ones which appear in our list, are the true
CR manifolds, with corresponding $J$'s satisfying the integrability conditions \eqref{ni1}-\eqref{ni2}.

\section{Flat parabolic CR structures with symmetry algebras  $\mathfrak{a}_{\ell}$, $\mathfrak{d}_{\ell}$ and $\mathfrak{e}_6$}
\subsection{The case of $E_{II}$ and $E_{III}$ symmetry} This was discussed in the Introduction in Theorems \ref{e2} and \ref{e3}. Here we mention that the explicit formulae for the embedding of CR manifolds depend on the convenience or, in most cases, on the personal taste of the person who embedds them. In particular, if one guesses the embeddings of the CR manifold of Theorem \ref{e3} by analyzing the structure of the simple roots of an appropriate real form of $E_6$, a more natural embedding of the 24-dimensional CR manifold $M^{24}_{E_{III}}$ can be given:

In $\mathbb{C}^{8 + 8}$ with holomorphic coordinates $(z_i,w_i)$, $i=1,2,\dots,8$, consider 
\[
\aligned
\tilde{M}^{24}_{E_{III}}\,=\,\, \Big\{\mathrm{Re}\,w_1
&
\,=\,
\mathrm{Re}\,
\big(
\vert z_1\vert^2
+
\vert z_2\vert^2
+
\vert z_3\vert^2
+
\vert z_4\vert^2
\big),
\\
\mathrm{Re}\,w_2
&
\,=\,
\mathrm{Re}\,
\big(
\vert z_5\vert^2
+
\vert z_6\vert^2
+
\vert z_7\vert^2
+
\vert z_8\vert^2
\big),
\\
\mathrm{Im}\,w_3
&
\,=\,
\mathrm{Im}\,
\big(
z_1\,\overline{z}_7
+
z_2\,\overline{z}_8
+
z_5\,\overline{z}_3
+
z_6\,\overline{z}_4
\big),
\\
\mathrm{Re}\,w_4
&
\,=\,
\mathrm{Re}\,
\big(
z_1\,\overline{z}_7
+
z_2\,\overline{z}_8
+
z_5\,\overline{z}_3
+
z_6\,\overline{z}_4
\big),
\\
\mathrm{Im}\,w_5
&
\,=\,
\mathrm{Im}\,
\big(
z_1\,\overline{z}_6
-
z_3\,\overline{z}_8
+
z_5\,\overline{z}_2
-
z_7\,\overline{z}_4
\big),
\\
\mathrm{Re}\,w_6
&
\,=\,
\mathrm{Re}\,
\big(
z_1\,\overline{z}_6
-
z_3\,\overline{z}_8
+
z_5\,\overline{z}_2
-
z_7\,\overline{z}_4
\big),
\\
\mathrm{Im}\,w_7
&
\,=\,
\mathrm{Im}\,
\big(
z_2\,\overline{z}_6
+
z_3\,\overline{z}_7
-
z_5\,\overline{z}_1
-
z_8\,\overline{z}_4
\big),
\\
\mathrm{Re}\,w_8
&
\,=\,
\mathrm{Re}\,
\big(
z_2\,\overline{z}_6
+
z_3\,\overline{z}_7
-
z_5\,\overline{z}_1
-
z_8\,\overline{z}_4
\big)\Big\}\subset\bbC^{16}.
\endaligned
\]
Then this embedded 24-dimensional CR manifold of CR dimension $n=8$ and CR codimension $k=8$ is biholomorphically equivalent to the one from Theorem \ref{e3}, and as such has the exceptional simple Lie group $E_{III}$ as its group of CR automorphisms.
\subsection{The case of $SO(\ell-1,\ell+1)$ symmetry}
\label{subsection-SO-ell-1-ell-1}
Let $\ell\geq 4$ and $N(\ell)=\frac{(\ell-1)(\ell+2)}{2}$, and consider $\bbC^{\ell(\ell-1)/2}$ with holomorphic coordinates $(w,z)=(w^{ij},z^k)$. Here $i<j,k=1,2,\dots,\ell-1$.

Define 
%/home/pawel/niebieski/notebooks/merker/E6_signature71_all_checking_embedding.nb

$$\boxed{\begin{aligned}
  M^{N(\ell)}\,=\,\, \Big\{~&\bbC^{\ell(\ell-1)/2}\ni (w,z)\,\,\,\,\mathrm{s.t.}\,\,\,\, Im(w^{ij}-z^i\bar{z}^j)=0\,\,\quad\forall i<j=1,2,\dots,\ell-1\,\,\Big\}
\end{aligned}}.$$

We have the following theorem.
\begin{theorem}\label{so}
   Let $\ell\geq 4$. The set $M^{N(\ell)}\subset \bbC^{\ell(\ell-1)/2}$ is a real $N(\ell)$-dimensional embedded CR manifold, acquiring the CR structure of CR dimension $n=\ell-1$ and CR codimension $k=\frac{(\ell-1)(\ell-2)}{2}$ from the ambient complex space $\bbC^{\ell(\ell-1)/2}$. Its local group of CR automorphisms is isomorphic to the real simple Lie group $SO(\ell-1,\ell+1)$ with the Lie algebra having the Satake diagram \tikzset{/Dynkin diagram/fold style/.style={stealth-stealth,thin,
shorten <=1mm,shorten >=1mm}}\begin{dynkinDiagram}[edge length=.5cm]{D}{ooo.oooo}
\dynkinFold{6}{7}
  \end{dynkinDiagram} with $\ell$ nodes. It is locally CR equivalent to the flat model $SO(\ell-1,\ell+1)/P_1$ of an $N(\ell)$-dimensional parabolic geometry of type $(SO(\ell-1,\ell+1),P_1)$, where the real parabolic subgroup $P_1$ in $SO(\ell-1,\ell+1)$ is determined by the following crossing on the corresponding $\mathfrak{d}_\ell$ Satake diagram:
\tikzset{/Dynkin diagram/fold style/.style={stealth-stealth,thin,
shorten <=1mm,shorten >=1mm}}\begin{dynkinDiagram}[edge length=.5cm]{D}{ooo.oott}
\dynkinFold{6}{7}
 \end{dynkinDiagram}.
\end{theorem}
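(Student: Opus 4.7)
The plan is to apply the Tanaka machinery of Section~2 to the symbol algebra read off from the explicit defining equations. Since $\Phi^{ij}(z,\bar z)=\Im(z^i\bar z^j)=y^ix^j-x^iy^j$, the rigid-CR formulas \eqref{rigid}--\eqref{lambda} immediately yield the real $1$-forms
\[
\lambda^{ij}=\der u^{ij}+(x^i\,\der x^j-x^j\,\der x^i)+(y^i\,\der y^j-y^j\,\der y^i),\qquad 1\le i<j\le\ell-1,
\]
spanning $H^\perp$, and the Nijenhuis integrability is automatic because $M^{N(\ell)}$ is rigid in the sense of \eqref{rigid}. Hence $M^{N(\ell)}$ is a genuine CR manifold of CR dimension $\ell-1$ and CR codimension $\binom{\ell-1}{2}$. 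What remains is to compute its Tanaka prolongation and to verify the accidentality of the $J$ (Definition~\ref{rrr}).

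From $\der\lambda^{ij}=2\,\der x^i\wedge\der x^j+2\,\der y^i\wedge\der y^j$ one reads off the $2$-step symbol $\mathfrak{n}_-=\mathfrak{n}_{-2}\oplus\mathfrak{n}_{-1}$ at the origin; invariantly
\[
\mathfrak{n}_{-1}=V\oplus JV,\qquad \mathfrak{n}_{-2}=\wedge^{\!2}V\ \ (V=\bbR^{\ell-1}),\qquad [\,v+Jw,\,v'+Jw'\,]=v\wedge v'+w\wedge w',
\]
so the $V$-bracket and the $JV$-bracket land identically in $\wedge^{\!2}V$ while $[V,JV]=0$. Writing an element $A\in\mathfrak{n}_0$ as a block matrix $\bigl(\begin{smallmatrix}P&Q\\ R&S\end{smallmatrix}\bigr)$ on $V\oplus JV$ and testing the derivation identity in \eqref{gt} on the three bracket types, I would show: (i)~the $V$-on-$V$ bracket forces the induced action on $\wedge^{\!2}V$ to be the canonical derivation by $P$; (ii)~the $JV$-on-$JV$ bracket forces $(S-P)$ to annihilate $\wedge^{\!2}V$ under the derivation action, whence $S=P$ by injectivity of $\mathfrak{gl}(V)\hookrightarrow\mathrm{End}(\wedge^{\!2}V)$ when $\dim V\ge 3$; (iii)~the vanishing cross-bracket yields $Rv\wedge w+v\wedge Qw=0$ for all $v,w\in V$, which for $\dim V\ge 3$ pins $R=\alpha\,\mathrm{Id}$ and $Q=-\alpha\,\mathrm{Id}$ for a single scalar $\alpha\in\bbR$.

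The outcome is
\[
\mathfrak{n}_0\ \simeq\ \mathfrak{gl}(\ell-1,\bbR)\,\oplus\,\bbR\cdot J,
\]
and every element of $\mathfrak{n}_0$ automatically commutes with $J$. This is the algebraic heart of the argument: the Tanaka prolongation of the bare symbol $\mathfrak{n}_-$ coincides with the Tanaka prolongation of the decorated symbol $(\mathfrak{n}_-,J)$, so by Theorem~\ref{tansym} one has $G_J=G$ locally, i.e.\ accidentality. A routine iteration of \eqref{gt} in positive degree gives $\mathfrak{n}_1\simeq\mathfrak{n}_{-1}^*$, $\mathfrak{n}_2\simeq\mathfrak{n}_{-2}^*$, the prolongation terminates, and the total dimension
\[
2\binom{\ell-1}{2}+4(\ell-1)+(\ell-1)^2+1\ =\ \ell(2\ell-1)
\]
equals $\dim\mathfrak{so}(2\ell)$; a bracket-sign check then identifies the prolongation with the real form $\mathfrak{so}(\ell-1,\ell+1)$, $|2|$-graded by crossing the two Galois-conjugate fork nodes of its Satake diagram (the unique real-parabolic choice forced by the Satake fold). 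Invoking Theorem~\ref{tansym} yields the asserted $SO(\ell-1,\ell+1)$-symmetry and the flat-model identification $M^{N(\ell)}\simeq SO(\ell-1,\ell+1)/P_1$.

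The main obstacle is step~(iii) of the grade-$0$ computation; this is where the hypothesis $\ell\ge 4$ is needed, since for $\dim V=2$ the derivation map $\mathfrak{gl}(V)\to\mathrm{End}(\wedge^{\!2}V)$ has kernel $\mathfrak{sl}_2$ and the cross-bracket test has too few free index pairs to force the off-diagonal blocks of $A$ to be scalar. A subsidiary bookkeeping point, once $\mathfrak{n}_0$ is in hand, is the selection of the correct real form of $\mathfrak{d}_\ell$ from the graded bracket data; but this is routine once one compares with the standard Satake-diagram table for the $|2|$-grading obtained by crossing the two fork nodes.
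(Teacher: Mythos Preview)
Your argument is correct and follows the same Tanaka-prolongation scheme that the paper carries out in detail for Theorems~\ref{e2} and~\ref{e3} and then invokes (together with Section~\ref{sec5} and the Medori--Nacinovich reference) for Theorem~\ref{so}. The only noteworthy difference is presentational: where the paper computes $\mathfrak{n}_0$ for the $E_6$ cases by explicit $16\times16$ matrices and then finds the $\mathfrak{n}_0$-invariant $J$ a posteriori, you model the symbol invariantly as $\mathfrak{n}_{-1}=V\oplus JV\to\mathfrak{n}_{-2}=\wedge^2V$ and read off $\mathfrak{n}_0\cong\mathfrak{gl}(\ell-1,\bbR)\oplus\bbR J$ directly from the block conditions (i)--(iii); this makes both the commutation with $J$ (accidentality) and the role of the hypothesis $\ell\ge4$ transparent, whereas the paper records the latter only via Example~\ref{exso} and the low-dimensional isomorphism $\mathfrak{so}(4,2)\cong\mathfrak{su}(2,2)$. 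Your positive-degree step (``routine iteration'' giving $\mathfrak{n}_1,\mathfrak{n}_2$ and termination) and the real-form identification are left at exactly the level of detail the paper uses (``further calculation shows\ldots''), so no gap relative to the paper, but if you want a citation-free argument you should either carry out the degree-$1$ and degree-$2$ steps or invoke Yamaguchi's criterion that $\mathfrak{g}=\mathfrak{g}_T(\mathfrak{n}_-)$ for this grading of $D_\ell$.
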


\begin{example}\label{exso} 
If $\ell=4$ we have a 9-dimensional CR manifold $M^9$, of \emph{CR dimension} $n=3$ and \emph{CR codimension} $k=3$, embedded in $\bbC^6$ with holomorphic coordinates $(w^{12},w^{13},w^{23},z^1,z^2,z^3)$, via $k=3$ real equations
  $$Im(w^{12}-z^1\bar{z}^2)=Im(w^{13}-z^1\bar{z}^3)=Im(w^{23}-z^2\bar{z}^3)=0.$$
  This CR structure has the orthogonal group $SO(5,3)$ as the full group of its local CR automorphisms.

  Our formulae are also valid for $\ell=3$. In this case the CR manifold $M^{N(3)}$ is 5-dimensional; it has CR dimension $n=2$ and CR codimension $k=1$. Therefore it is a CR manifold of \emph{hypersurface type} in $\bbC^3$. Indeed, with holomorphic coordinates $(w^{12},z^1,z^2)$ our formulae give the \emph{hyperquadric} CR structure $Im(w^{12})=\tfrac{1}{2i}(z^1\bar{z}^2-\bar{z}^1z^2)$ in $\bbC^3$. This has the \emph{Levi form} of signature $(+,-)$ and is locally equivalent to the \emph{Penrose's null twistors} CR manifold $PN=\{|Z^1|^2+|Z^2|^2-|Z^3|^2-|W|^2=0\}$ in $\bbC P^3$, with homogeneous coordinates $[Z^1,Z^2,Z^3,W]$. Restricting to $\ell\geq 4$ we excluded it from our Theorem, since this flat parabolic CR structure is already on the \emph{classical list of parabolic CR structures of hypersurface type} corresponding to the contact gradation in $\sua(p,q)$. This is due to the \emph{low dimensional isomorphism} between the Lie algebras $\soa(4,2)$ and $\sua(2,2)$.   
  \end{example}

\subsection{Explicit formulae for the CR symmetry generators}
We also calculated vector fields of infinitesimal 
CR automorphisms generating the symmetry algebra
for the $SO(\ell-1,\ell+1)$ symmetric CRs covered by Theorem \ref{so};
see Appendix~C for explicit formulae.
This was also done for all other homogeneous CR structures included in this paper, but the formulae are too long to be included here. 
They can be found in \cite{HMNN}.

\subsection{The case of $SO^*(2\ell)$, with $\ell=2m+1$, symmetry}
Let $m\geq 2$, $\ell=2m+1$ and $N(m)=m(2m+3)$, and consider $\bbC^{m(2m+1)}$ with holomorphic coordinates $(w_I,w,z,\zeta)=(w^{ij}_I,w^{k_1},z^{k_2},\zeta^{k_3})$. Here $I=1,2,3,4$; $i<j,k_1,k_2,k_3=1,2,\dots,m$.

Define
$$\boxed{\begin{aligned}
  M^{N(m)}\,=\,\, \Big\{
                        ~\bbC^{m(2m+1)}\ni &(w_I,w,z,\zeta)\,\,\,\,\mathrm{s.t.}\\
  & \left\{   
\begin{aligned} & Im(w^{ij}_1)=Im(z^i\bar{z}^j+\zeta^i\bar{\zeta}^j)\\
                         &  Im(w^{ij}_2)=Im(z^i\bar{\zeta}^j+\zeta^i\bar{z}^j)\\
                         &  Im(w^{ij}_3)=Re(z^i\bar{z}^j+\zeta^i\bar{\zeta}^j)\\
                         &  Im(w^{ij}_4)=Re(z^i\bar{\zeta}^j+\zeta^i\bar{z}^j)
               \end{aligned}  
  \right\} \,\,\,\forall i<j=1,2,\dots m,\\
  & \,\,\,\,\,\,Im(w^i)=|z^i|^2+|\zeta^i|^2\quad\quad\quad\,\,\,\forall i=1,2,\dots m~\Big\}
\end{aligned}}.$$

We have the following theorem.
\begin{theorem}\label{so*}
  Let $m\geq 2$. The subset $M^{N(m)}$ of $\bbC^{m(2m+1)}$ is a real $N(m)$-dimensional embedded CR manifold, acquiring the CR structure of CR dimension $n=2m$ and CR codimension $k=m(2m-1)$ from the ambient complex space $\bbC^{m(2m+1)}$. Its local group of CR automorphisms is isomorphic to the real simple Lie group $SO^*(4m+2)$ with the Lie algebra having the Satake diagram \tikzset{/Dynkin diagram/fold style/.style={stealth-stealth,thin,
shorten <=1mm,shorten >=1mm}}\begin{dynkinDiagram}[edge length=.5cm]{D}{*o*.o*oo}
\dynkinFold{6}{7}
  \end{dynkinDiagram} with $\ell=2m+1$ nodes. It is locally CR equivalent to the flat model $SO^*(4m+2)/P_2$ of an $N(m)$-dimensional parabolic geometry of type $(SO^*(4m+2),P_2)$, where the real parabolic subgroup $P_2$ in $SO^*(4m+2)$ is determined by the following crossing on the corresponding $\mathfrak{d}_\ell$ Satake diagram:
\tikzset{/Dynkin diagram/fold style/.style={stealth-stealth,thin,
shorten <=1mm,shorten >=1mm}}\begin{dynkinDiagram}[edge length=.5cm]{D}{*o*.o*tt}
\dynkinFold{6}{7}
 \end{dynkinDiagram}.
\end{theorem}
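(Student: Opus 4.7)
The plan is to follow the algebraic blueprint of Section \ref{approach}: first identify the 2-step nilpotent negative part $\mathfrak{g}_{\minu}=\mathfrak{g}_{\minu 2}\oplus\mathfrak{g}_{\minu 1}$ of the $|2|$-grading of $\soa^*(4m+2)$ cut out by crossing the second node of its Satake diagram, then equip $\mathfrak{g}_{\minu 1}$ with the canonical complex structure $J$ coming from the reductive factor $\mathfrak{g}_0$, realize the flat parabolic model on the nilpotent Lie group $Nil$ of $\mathfrak{g}_{\minu}$, and finally match it with the explicit embedding $M^{N(m)}\subset\bbC^{m(2m+1)}$.

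First I would use the standard matrix realization of $\soa^*(4m+2)$ as $\soa(4m+2,\bbC)$-elements commuting with a quaternionic structure, and read off the gradation from the root-space decomposition relative to the crossed second node. A dimension count yields $\dim\mathfrak{g}_{\minu 1}=4m$ and $\dim\mathfrak{g}_{\minu 2}=m(2m-1)$, matching the target CR dimension $2n=4m$ and codimension $k=m(2m-1)$. The $\mathfrak{g}_0$-module $\mathfrak{g}_{\minu 1}$ inherits a natural complex structure $J$ from the ambient quaternionic action on the defining representation, and since no element of $\mathfrak{g}_0$ anticommutes with $J$, the $J$-preserving Tanaka prolongation of $(\mathfrak{g}_{\minu},J)$ coincides with the full prolongation $\mathfrak{g}_T(\mathfrak{g}_{\minu})\simeq\soa^*(4m+2)$. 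This simultaneously shows that the flat distribution is transitive under $SO^*(4m+2)$ and that decorating it with $J$ does not shrink the symmetry, so the CR structure is accidental in the sense of Definition \ref{rrr}.

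Next I would parametrize $Nil$ by exponential coordinates split as $(u_I^{ij},u^k)$ on $\mathfrak{g}_{\minu 2}$ and $(z^\alpha,\zeta^\alpha)$ on the complexification of $\mathfrak{g}_{\minu 1}$, the latter reflecting $\mathfrak{g}_{\minu 1}\simeq\bbH^m$ viewed as two complex $m$-planes exchanged by the quaternionic unit $\mathbf{j}$. A Baker--Campbell--Hausdorff computation truncated at second order then produces the left-invariant real $1$-forms $\lambda_I^{ij},\lambda^k$ dual to a basis of $\mathfrak{g}_{\minu 2}$ in the form \eqref{lambda}, and the four $\mathfrak{g}_0$-irreducible projections of the bracket $\Lambda^2\mathfrak{g}_{\minu 1}\to\mathfrak{g}_{\minu 2}$ give precisely the four Hermitian templates $\mathrm{Im}(z^i\bar z^j+\zeta^i\bar\zeta^j)$, $\mathrm{Im}(z^i\bar\zeta^j+\zeta^i\bar z^j)$, $\mathrm{Re}(z^i\bar z^j+\zeta^i\bar\zeta^j)$, $\mathrm{Re}(z^i\bar\zeta^j+\zeta^i\bar z^j)$, while the $\mathfrak{g}_0$-invariant symmetric direction in $S^2\mathfrak{g}_{\minu 1}$ accounts for the diagonal components $|z^i|^2+|\zeta^i|^2$. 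Passing to holomorphic coordinates $w_I^{ij}=u_I^{ij}+i\Phi_I^{ij}$ and $w^i=u^i+i\Phi^i$ via \eqref{rigid} reproduces the defining equations of $M^{N(m)}$; the functions $w_I^{ij},w^i,z^\alpha,\zeta^\alpha$ are then CR functions giving the embedding \eqref{emb}. Integrability of $J$ on $H$ is automatic because $M^{N(m)}$ is a real submanifold of $\bbC^{m(2m+1)}$, and Theorem \ref{tansym} combined with the prolongation computation of the first step identifies the local CR automorphism group as $SO^*(4m+2)$, locally realizing $M^{N(m)}\simeq SO^*(4m+2)/P_2$.

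The main obstacle is the explicit identification in the second step: pinning down the $\mathfrak{g}_0$-equivariant basis of $\mathfrak{g}_{\minu 2}$ in which the bracket visibly separates into the four real Hermitian pieces over $\bbH^m$ together with the diagonal $|z|^2+|\zeta|^2$ piece. This requires choosing a Chevalley basis of $\soa^*(4m+2)$ adapted to the Satake involution so that the quaternionic Hermitian form on $\bbH^m\subset\mathfrak{g}_{\minu 1}$ decomposes as real and imaginary parts along the four real quaternionic directions $\mathbf{1},\mathbf{i},\mathbf{j},\mathbf{k}$; once this basis is fixed the defining equations read off directly, but the bookkeeping of signs and reality conditions (and verifying that the parabolic $P_2$ arising is genuinely real, which uses that the crossed node is white in the Satake diagram) is the delicate part of the argument.
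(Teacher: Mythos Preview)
Your outline is broadly correct and actually parallels the structural route the paper sketches in Section~5 (via Medori--Nacinovich and Yamaguchi), whereas the paper's explicit template in Section~4 runs the argument in the opposite direction: it starts from the given embedding $M^{N(m)}$, extracts the real Pfaffian system $\lambda^i$ via~\eqref{lambda}, reads off the $2$-step nilpotent symbol $\mathfrak{n}$ from the EDS, computes $\mathfrak{n}_0$ as the space of strata-preserving derivations, \emph{discovers} $J$ as the essentially unique $\mathfrak{n}_0$-invariant endomorphism of $\mathfrak{n}_{\minu 1}$ squaring to $-\mathrm{id}$, and only then identifies the full prolongation. Your top-down approach (start from the known $|2|$-grading of $\soa^*(4m+2)$, build $Nil$, then embed) is equally valid and arguably cleaner conceptually; the paper's bottom-up approach has the advantage that accidentality emerges as a computation rather than an assertion.

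Two points need tightening. First, the grading is cut out by crossing the \emph{last pair} of nodes $\{\alpha_{\ell-1},\alpha_\ell\}$ (the fork nodes joined by the curved arrow in the Satake diagram), not ``the second node''; the subscript in $P_2$ is just a label. Second, your accidentality step is not quite right as stated: ``no element of $\mathfrak{g}_0$ anticommutes with $J$'' is neither sufficient nor the relevant condition. What you need is the conjunction of (a) every strata-preserving derivation of $\mathfrak{n}_\minu$ already commutes with $J$ (so imposing $J$ does not shrink $\mathfrak{n}_0$), and (b) the Tanaka prolongation of the bare $\mathfrak{n}_\minu$ is already $\soa^*(4m+2)$. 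Point~(b) is not automatic and is exactly what the paper invokes Yamaguchi \cite{Yam}*{Theorem 5.3} for in the proof of Theorem~\ref{accr}; you should cite it rather than assert it. Point~(a) is what the paper verifies by solving~\eqref{inJ} in the $E_{II}$ case, and in your structural approach it follows once you check that $J$ lies in the center of (the semisimple part of) $\mathfrak{g}_0$ acting on $\mathfrak{g}_{\minu 1}$, which is the content of \cite{medori}*{Theorem~2.4}.
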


\begin{example}\label{exso*}
  If $m=2$, $l=3$, we have a 14-dimensional CR manifold of \emph{CR dimension} $n=4$ and \emph{CR codimension} $k=6$, CR embedded in $\bbC^{10}$ with holomorphic coordinates $(w^{12}_1,w^{12}_2,w^{12}_3,w^{12}_4,w^1,w^2,z^1,z^2,\zeta^1,\zeta^2)$, via the $k=6$ real equations
  $$\begin{aligned}
    &Im(w^{12}_1-z^1\bar{z}^2-\zeta^1\bar{\zeta}^2)= Im(w^{12}_2-z^1\bar{\zeta}^2-\zeta^1\bar{z}^2)=0\\
    &Im(w^{12}_3)-Re(z^1\bar{z}^2+\zeta^1\bar{\zeta}^2)=Im(w^{12}_4)-Re(z^1\bar{\zeta}^2+\zeta^1\bar{z}^2)=0\\
    &Im(w^1)-|z^1|^2-|\zeta^1|^2=Im(w^2)-|z^2|^2-|\zeta^2|^2=0.
  \end{aligned}$$

 This CR structure has $SO^*(10)$ as the full group of its local CR automorphisms.

    Again, the formulae can be also made valid for $m=1$. In this case we have a 5-dimensional CR manifold $M^5$ embedded in $\bbC^3$ with coordinates $(w,z,\zeta)$. The real manifold is obtained as a \emph{hypersurface} $Im(w)=z\bar{z}+\zeta\bar{\zeta}$ in $\bbC^3$. We thus have again an embedded hypersurface CR structure, the \emph{Heisenberg group}, of CR dimension $n=2$ and CR codimension $k=1$, but this one with the signature of its Levi form $(+,+)$. This is again a classical flat parabolic CR structure from the series $SU(p,q)$, for $p=3$, $q=1$. It has $SO^*(6)$ as the group of its local automorphisms. It is excluded from our Theorem in order not to double the classification of the parabolic CR structures, due to the low dimensional isomorphism $\soa^*(6)=\sua(3,1)$.
\end{example}
\subsection{Flat parabolic CR structures with symmetry algebra $\sua(p,q)$}
We end our survey of examples, with a branch of accidental homogeneous CR manifolds which generalize, to the accidental setting, the codimension one \emph{hyperquadrics} embedded in $\bbC^{n+1}$. Our accidental ones have symmetries of a $\sug(p,q)$ group, similar to the hypersurface ones, but they have \emph{higher codimension}. We left this discussion to the end of our survey, because the presentation of all of these higher codimension accidental CRs with $\sug(p,q)$ symmetry is quite complicated, due to many possible choices of parabolic subgroups $P$ in $\sug(p,q)$, which lead to an \emph{accidental} CR structure $J$ on $M=\sug(p,q)/P$.

Since we are always in the homogeneous situation, it is enough to indicate which choices of parabolic subalgebras $\mathfrak{p}$ in $\sua(p,q)$ lead to the gradation $\sua(p,q)=\mathfrak{g}_{-2}\oplus\mathfrak{g}_{-1} \oplus \mathfrak{g}_0\oplus \mathfrak{g}_1\oplus\mathfrak{g}_2$ having natural $J:\mathfrak{g}_{-1}\to\mathfrak{g}_{-1}$ such that $J^2=-id$. These can be described as follows:

The Lie algebra $\sua(p,q)$ has $\ell=p+q-1$ simple positive roots $\alpha_1,\alpha_2$,...,$\alpha_{\ell}$. For reasons which will be clear soon in the examples, for our discussion it is enough to only consider $p\leq q$, when $p\geq2$ and $q\geq 3$.

%First let us consider the case $(p,q)=(2,3)$. In this case we have the following \emph{Satake diagram} for the real Lie algebra $\sua(2,3)$ 
%$$\begin{dynkinDiagram}[edge length=.4cm%,labels*={1,...,6}]{A}{o-o-o-o}
%\invol{1}{7}\invol{3}{5}
% \end{dynkinDiagram},$$

The following two cases should be considered. 
\begin{itemize}
  \item Either $2\leq p<q$, and then the \emph{Satake diagram} for the real Lie algebra $\sua(p,q)$ is
$$\begin{dynkinDiagram}[edge length=.4cm]{A}{ooo.o**.**o.ooo}
\invol{1}{12}\invol{2}{11}\invol{3}{10}\invol{4}{9}
  \end{dynkinDiagram};$$
    here the black nodes start after the first $p$ white nodes, and became again white starting at $(\ell-p+1)^\mathrm{st}=q^\mathrm{th}$ node,
    \item or $p=q$, and 
the \emph{Satake diagram} for the real Lie algebra $\sua(p,p)$ is
$$\begin{dynkinDiagram}[edge length=.4cm%,labels*={1,...,6}
  ]{A}{ooo.ooo.ooo}
\invol{1}{9}\invol{2}{8}\invol{3}{7}\invol{4}{6}
\end{dynkinDiagram}.$$\end{itemize}
In both pictures, the $\ell=p+q-1$ roots $\alpha_1,\alpha_2$, ..., $\alpha_\ell$ are symbolized by the white or black nodes in the diagram from left to right.

It follows that each parabolic subgroup $P_s$, with Lie algebra $\mathfrak{p}_s$, which leads to an accidental CR structure on the corresponding homogeneous manifold $M=\sug(p,q)/P_s$ is in one to one correspondence with the \emph{choice of a pair of roots} $c_s=\{\alpha_s,\alpha_{p+q-s}\}$. The choices $c_s$ of these pairs of roots lead to \emph{nonequivalent CR manifolds} $M=\sug(p,q)/P_s$
$$ \mathrm{for}\quad\mathrm{each}\quad 1\leq p< q\quad\mathrm{ and}\quad s=1,2,\dots, p,$$
and
$$ \mathrm{for}\quad\mathrm{each}\quad q=p\geq 2\quad\mathrm{ and}\quad s=1,2,\dots, p-1.$$
It further follows that \emph{for each choice of the integers} $p,q,s$ \emph{as above}, the \emph{CR manifold} $M=\sug(p,q)/P_s$ has
\begin{itemize}
  \item \emph{real dimension}
    $\dim M=s(2p+2q-3s)$, \item the \emph{CR dimension} $n=s(p+q-2s)$ and \item the \emph{CR codimension} $k=s^2$.\end{itemize}
In particular, the familiar codimension one \emph{hyperquadrics} embedded in $\bbC^{n+1}$, with the $\sug(p,q)$ group of CR automorphisms, corresponds to $s=1$ and the choice of the roots $c_1=\{\alpha_1,\alpha_{p+q-1}\}$.
As it is seen from here, if $s\geq 2$ these CR structures are \emph{not} of hypersurface type. In all cases $s\geq 2$ these CR structures are \emph{accidental}.

To state an appropriate theorem about the accidental CRs with $\sug(p,q)$ symmetry we thus need to choose three integers $p,q,s$ with their ranges as discussed above. It turns out, that to set up the formulas for the embedding of these CR manifolds in $\bbC^{n+k}$ it is convenient to pass from the integers $p,q,s$ to the new integers
$$t=p-s\quad \mathrm{and}\quad r=q-p.$$
If $p,q,s$ changes as we discussed above, we have the following ranges of $t,r$ and $s$:
$$r\geq 0, \quad t\geq 0, \quad s\geq 1\quad\mathrm{and}\quad (r,t)\neq (0,0).$$
Let $$N(t,r,s)=s(2r+4t+s)$$ and consider $\bbC^{s(r+2t+s)}$ with holomorphic coordinates $$(z,u,v,w)=(z_{a\mu},u_{bA},v_{cB},w_{de})$$ and with $1\leq a,b,c,d,e\leq s,\,\,$ with $1\leq \mu\leq r\,\,$ and $1\leq A,B\leq t$, provided that $r\neq 0$ and $t\neq 0$. If $r=0$ the $z$ variables are not present in $(z,u,v,w)$, and if $t=0$ the $u$ and $v$ variables are not present in $(z,u,v,w)$.  

Define 
%/home/pawel/niebieski/notebooks/merker/E6_signature71_all_checking_embedding.nb

$$\boxed{
  \begin{aligned}
    M^{N(t,r,s)}\,=\,\,
    &\left\{
     \begin{aligned}
      \bbC^{s(r+2t+s)}&\ni (z,u,v,w)\,\,\,\,\mathrm{s.t.}\\
   &\mathrm{Im}\,w_{ac}
\,=\,
\mathrm{Im}\,
\left\{
\aligned
z_{a1}\,\overline{z}_{c1}
&
+\cdots+
z_{ar}\,\overline{z}_{cr}
\\
+
u_{a1}\,\overline{v}_{c1}
&
+\cdots+
u_{at}\,\overline{v}_{ct}
\\
+
v_{a1}\,\overline{u}_{c1}
&
+\cdots+
v_{at}\,\overline{u}_{ct}
\endaligned
\right\}\quad\mathrm{for}\quad 1\leq a<c\leq s,\\
&\mathrm{Re}\,w_{ac}
\,=\,
\mathrm{Re}\,
\left\{
\aligned
z_{a1}\,\overline{z}_{c1}
&
+\cdots+
z_{ar}\,\overline{z}_{cr}
\\
+
u_{a1}\,\overline{v}_{c1}
&
+\cdots+
u_{at}\,\overline{v}_{ct}
\\
+
v_{a1}\,\overline{u}_{c1}
&
+\cdots+
v_{at}\,\overline{u}_{ct}
\endaligned
\right\} \quad\mathrm{for}\quad 1\leq c\leq a\leq s.
    \end{aligned}
    \right\}
\end{aligned}.}$$

We have the following theorem.
\begin{theorem}\label{su}
  Let $r\geq 0$, $t\geq 0$, $s\geq 1$ and $(r,t)\neq (0,0)$. The set $M^{N(t,r,s)}\subset \bbC^{s(r+2t+s)}$ is a real $N(t,r,s)$-dimensional embedded CR manifold, acquiring the CR structure of CR dimension $n=s(r+2t)$ and CR codimension $k=s^2$ from the ambient complex space $\bbC^{s(r+2t+s)}$. Its local group of CR automorphisms is isomorphic to the real simple Lie group $\sug(t+s,r+t+s)$. It is locally CR equivalent to the flat model $\sug(t+s,r+t+s)/P_s$ of an $N(t,r,s)$-dimensional parabolic geometry of type $(\sug(t+s,r+t+s),P_s)$, where the real parabolic subgroup $P_s$ in $\sug(t+s,r+t+s)$ is determined by crossing the roots $\alpha_s$ and $\alpha_{r+2t+s}$ on the Satake diagram of $\sug(t+s,r+t+s)$. 
\end{theorem}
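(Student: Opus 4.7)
I would verify the basic CR data directly. Each of the $s^2$ real defining equations sets a distinct real or imaginary part of a $w_{ac}$-coordinate equal to a real function of the remaining variables, so the equations are linearly independent and $M^{N(t,r,s)}$ is a smooth embedded real submanifold of real codimension $s^2$ and real dimension $2s(r+2t+s)-s^2 = N(t,r,s)$. Writing the defining functions as $\phi_{ac}$, each $(1,0)$-form $\partial\phi_{ac}$ contains a non-zero $\partial w_{ac}$ term with the indices $(a,c)$ all distinct, so $\bigwedge_{a,c}\partial\phi_{ac}\neq 0$; this is the generic CR condition, yielding CR codimension $k = s^2$ and CR dimension $n = s(r+2t)$, with CR structure inherited from the ambient $\bbC^{s(r+2t+s)}$.

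Next, I would identify $M^{N(t,r,s)}$ with an open affine chart on the Grassmannian of totally $h$-isotropic $s$-planes in $\bbC^{r+2t+2s}$, where $h$ is the Hermitian form of signature $(t+s,r+t+s)$ whose matrix, in a basis splitting as $\bbC^s\oplus\bbC^r\oplus\bbC^t\oplus\bbC^t\oplus\bbC^s$, is anti-diagonal on the outer $s$-blocks, $I_r$ on the $r$-block, and $\bigl(\begin{smallmatrix}0 & I_t \\ I_t & 0\end{smallmatrix}\bigr)$ on the $2t$-block. A generic isotropic $s$-plane is the row span of $\Xi = [I_s \mid Z \mid U \mid V \mid \Omega]$ for unique $Z \in \mathrm{Mat}_{s\times r}(\bbC)$, $U,V \in \mathrm{Mat}_{s\times t}(\bbC)$, $\Omega \in \mathrm{Mat}_{s\times s}(\bbC)$, and the isotropy relation $\Xi\,h\,\Xi^\ast = 0$ unpacks to the matrix identity
\[
\Omega + \Omega^\ast \,=\, -H, \qquad H_{ac} \,:=\, z_{a\mu}\overline{z}_{c\mu} + u_{aA}\overline{v}_{cA} + v_{aA}\overline{u}_{cA},
\]
which pins the $s^2$ real degrees of freedom lying in the Hermitian part of $\Omega$. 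A suitable $\bbC$-linear reparameterization of the $s^2$ complex entries of $\Omega$ in terms of the matrix $W$ of the theorem --- storing the real parts of the diagonal and lower-triangular entries of $\Omega$ and the imaginary parts of the strictly upper-triangular ones, after an appropriate $i$-twist --- converts this into the precise defining system of $M^{N(t,r,s)}$. The natural $G$-action on the isotropic Grassmannian, with $G = \sug(t+s,r+t+s)$, preserves isotropy and restricts on the chart to a holomorphic action, hence acts by CR automorphisms; the isotropy of the base point is the parabolic $P_s$ obtained by crossing $\alpha_s$ and $\alpha_{r+2t+s}$ on the Satake diagram, which induces the $|2|$-grading $\sua(t+s,r+t+s) = \fg_{-2}\oplus\fg_{-1}\oplus\fg_0\oplus\fg_1\oplus\fg_2$ with $\fg_{-1} \cong \mathrm{Mat}_{s\times(r+2t)}(\bbC)$ and $\dim_\bbR\fg_{-2} = s^2$.

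To conclude that $G$ is the full local CR automorphism group, I would compute the symbol Lie algebra $\fn_-$ of the underlying distribution $H$ at the base point of $M^{N(t,r,s)}$ using the dual 1-forms $\lambda^{ac}$ read off from the $\phi_{ac}$'s. A direct bracket calculation matches $\fn_-$ with $\fg_{-2}\oplus\fg_{-1}$ as graded nilpotent Lie algebras, whereupon Theorem \ref{tansym} identifies $G$ with the maximal symmetry group of the distribution (for $s \geq 2$; the hypersurface case $s = 1$ instead requires the Levi--Tanaka prolongation of the decorated symbol $(\fn_-,J)$, which also yields $\sua(t+s,r+t+s)$). The ambient complex structure $J$ on $\fg_{-1}$ is preserved by the adjoint action of $\fg_0$ --- this is the accidentality of the pair $(P_s, J)$ built into the selection criterion of Section \ref{approach} --- so $G_J = G$ and $(M^{N(t,r,s)},H,J)$ is locally CR equivalent to the flat model $G/P_s$.

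The main technical obstacle is the matching step: unpacking the clean matrix identity $\Omega + \Omega^\ast = -H$ into the unusual mixed real/imaginary pattern of the theorem requires a careful $\bbC$-linear reparameterization of $\Omega$, since the natural Hermitian-part projection does not immediately deliver the theorem's triangular pattern, and signs, factors of $i$, and complex conjugates must all be tracked consistently. Everything else --- the Tanaka prolongation, the accidentality argument, and the passage to the flat model $G/P_s$ --- is a formal application of the parabolic-geometry machinery once the flat-model identification is in place.
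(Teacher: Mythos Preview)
Your approach is correct in its broad strokes but takes a genuinely different route from the paper. The paper does not use the isotropic-Grassmannian identification at all: its method, illustrated in detail for the $E_{II}$ and $E_{III}$ cases and then declared to apply verbatim to Theorem~\ref{su}, is to write the distribution $H$ explicitly via the Pfaffian forms $\lambda^i$ built from the defining functions $\Phi^i$, compute the symbol algebra $\mathfrak{n}_-$ from the structure constants, and then \emph{directly} compute the Tanaka prolongation $\mathfrak{g}_T(\mathfrak{n}_-)$ strata by strata, identifying $\mathfrak{n}_0$ by solving the derivation equations and recognising the resulting pattern. The paper then exhibits $J$ as the (essentially unique) $\mathfrak{n}_0$-invariant almost complex structure on $\mathfrak{n}_{-1}$, which certifies accidentality. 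For Theorem~\ref{su} specifically, the paper simply points to Section~\ref{sec5} and to Medori--Nacinovich \cite{medori} rather than repeating the computation. Your Grassmannian picture, by contrast, makes the transitive $\sug(t+s,r+t+s)$-action and the identification $M\cong G/P_s$ manifest from the outset; what the paper obtains by computation you obtain by construction.

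There is, however, one step you leave underspecified. Matching $\mathfrak{n}_-$ with $\mathfrak{g}_{-2}\oplus\mathfrak{g}_{-1}$ and invoking Theorem~\ref{tansym} tells you only that the maximal symmetry algebra of the distribution is $\mathfrak{g}_T(\mathfrak{n}_-)$; it does not by itself tell you that $\mathfrak{g}_T(\mathfrak{n}_-)=\sua(t+s,r+t+s)$ rather than something strictly larger. Your Grassmannian construction gives the inclusion $\mathfrak{g}\subset\mathfrak{g}_T(\mathfrak{n}_-)$, but the reverse inclusion requires either Yamaguchi's criterion \cite{Yam}*{Theorem 5.3} (which is exactly what the paper invokes in the proof of Theorem~\ref{accr}) or an explicit computation of $\mathfrak{n}_0,\mathfrak{n}_1,\mathfrak{n}_2$ in the style of the paper's $E_6$ argument. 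Your appeal to ``the selection criterion of Section~\ref{approach}'' is heuristic: that subsection outlines a search strategy, not a theorem. Once you plug this gap by citing Yamaguchi or the Section~\ref{sec5} classification, your argument is complete and arguably more transparent than the paper's, since the homogeneous-space structure is visible rather than emergent.
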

\begin{example}\label{su23}
  To get an \emph{accidental} CR manifold with $\sug(p,q)$ symmetry we need to have $s\geq2$, since otherwise the CR manifold has codimension one, and is of a hypersurface type. Knowing that we need $s$ at least as large as 2, and using inequalities for $t$ and $r$, it is easy to see that the smallest possible $\sug(p,q)$ symmetry group of an accidental CR structure is $\sug(2,3)$. In such case $p=2$, $q=3$, and the corresponding choice of a parabolic $P_s$ in $\sug(2,3)$ leading to the accidental CR structure on $\sug(2,3)/P_s$ is, at the level of Lie algebra, given by the following crossings  in the $\sua(2,3)$ Satake diagram:
\begin{center}
  \includegraphics[scale=0.19]{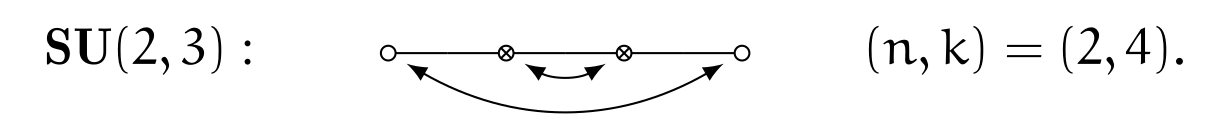}
\end{center}
%$$\sug(2,3):\quad\quad\begin{dynkinDiagram}[edge length=.4cm%,labels*={1,...,6}]{A}{o-t-t-o}\invol{1}{7}\invol{3}{5} \end{dynkinDiagram}\quad\quad (n,k)=(2,4).$$
  As indicated in the diagram, this choice of a parabolic $P_2$ in $\sug(2,3)$ corresponds to an accidental CR manifold $M^8=\sug(2,3)/P_s$. This $\sug(2,3)$ homogeneous CR manifold has dimension $N=8$, CR dimension $n=2$ and CR codimension $k=4$. It is embedded in $\bbC^6$ as follows:
  $$M^8=\left\{\begin{aligned}\bbC^6&\ni(z_{1},z_{2},w_{11},w_{12},w_{21},w_{22})\,\,s.t.\\
  &w_{12}-\bar{w}_{12}=z_{1}\bar{z}_{2}-\bar{z}_{1}z_{2}\\
  &w_{11}+\bar{w}_{11}=2z_{1}\bar{z}_{1}\\
  &w_{21}+\bar{w}_{21}=z_{1}\bar{z}_{2}+\bar{z}_{1}z_{2}\\
  &w_{22}+\bar{w}_{22}=2z_{2}\bar{z}_{2}\end{aligned}\right\},$$
  and has $\sug(2,3)$ group of CR automorphisms.
  \end{example}
\begin{example}\label{exsu33} 
  In case of the $\sug(3,3)$ CR symmetry we have $p=q=3$, i.e. $r=0$ and $t=3-s>0$, and if we want to have an accidental CR structure we need to take the only possibility $s=2$.

  The Satake Diagram for $\sua(3,3)$ is
  $$\begin{dynkinDiagram}[edge length=.4cm%,labels*={1,...,6}
  ]{A}{ooooo}
\invol{1}{5}\invol{2}{4}
  \end{dynkinDiagram},$$
  and the choice of a parabolic leading to the accidental CR structure is:
$$\begin{dynkinDiagram}[edge length=.4cm]{A}{ototo}\invol{1}{5}\invol{2}{4}
  \end{dynkinDiagram},\quad\mathrm{with}\quad (n,k)=(4,4).$$
  The embedding is given by:
  $$M^{12}=\left\{\begin{aligned}\bbC^{8}&\ni(u_{a},v_{b},w_{cd}), a,b,c,d=1,2, s.t.\\
  &w_{12}-\bar{w}_{12}=u_{1}\bar{v}_{2}+v_{1}\bar{u}_{2}-\bar{u}_{1}v_{2}-\bar{v}_{1}u_{2}\\
  &w_{11}+\bar{w}_{11}=2(u_{1}\bar{v}_{1}+v_{1}\bar{u}_{1})\\
  &w_{21}+\bar{w}_{21}=u_{2}\bar{v}_{1}+v_{2}\bar{u}_{1}+\bar{u}_{2}v_{1}+\bar{v}_{2}u_{1}\\
   &w_{22}+\bar{w}_{22}=2(u_{2}\bar{v}_{2}+v_{2}\bar{u}_{2})
  \end{aligned}\right\},$$
  and provides an \emph{accidental} CR structure  
  of CR dimension $n=4$ and CR codimension $k=4$ 
  in $\bbC^{8}$ with the CR automorphisms group $\sug(3,3)$.
\end{example}
\begin{example}\label{exsu24} 
  In case of the $\sug(2,4)$ CR symmetry we have $p=2$, $q=4$, i.e. $r=2$ and $t=2-s\geq 0$, and an accidental CR structure appears for $s=2$ only. In such case $t=0$.

  The Satake Diagram for $\sua(2,4)$ is
  $$\begin{dynkinDiagram}[edge length=.4cm%,labels*={1,...,6}
  ]{A}{oo*oo}
\invol{1}{5}\invol{2}{4}
  \end{dynkinDiagram},$$
  and the choice of a parabolic leading to the accidental CR structure is:
$$\begin{dynkinDiagram}[edge length=.4cm]{A}{ot*to}\invol{1}{5}\invol{2}{4}
  \end{dynkinDiagram},\quad\mathrm{with}\quad (n,k)=(4,4).$$
  The embedding is given by:
  $$M^{12}=\left\{\begin{aligned}\bbC^{8}&\ni(z_{ab},w_{cd}), a,b,c,d=1,2, s.t.\\
  &w_{12}-\bar{w}_{12}=z_{11}\bar{z}_{21}+z_{12}\bar{z}_{22}-\bar{z}_{11}z_{21}-\bar{z}_{12}z_{22}\\
  &w_{11}+\bar{w}_{11}=2(z_{11}\bar{z}_{11}+z_{12}\bar{z}_{12})\\
  &w_{21}+\bar{w}_{21}=z_{21}\bar{z}_{11}+z_{22}\bar{z}_{12}+\bar{z}_{21}z_{11}+\bar{z}_{22}z_{12}\\
   &w_{22}+\bar{w}_{22}=2(z_{21}\bar{z}_{21}+z_{22}\bar{z}_{22})
  \end{aligned}\right\}.$$
  This provides an \emph{accidental} CR structure  
  of CR dimension $n=4$ and CR codimension $k=4$ 
  in $\bbC^{8}$. It has $\sug(2,4)$ as its group of CR automorphisms.
\end{example}
\begin{example}\label{exsu34} 
  If $p=3$, $q=4$, and the group of CR automorphisms is $\sug(3,4)$ we have $r=4-3=1$ and $t=3-s\geq 0$, and we encounter the lowest dimensional situation when we have \emph{two} nonequivalent \emph{accidental} CR manifolds. This is because in this case there are two not equal to 1 possible values for $s$, namely $s=2$ and $s=3$. 
  The Satake Diagram for $\sua(3,4)$ with the choice of parabolic related to $s=2$ is
  \begin{center}
  \includegraphics[scale=0.23]{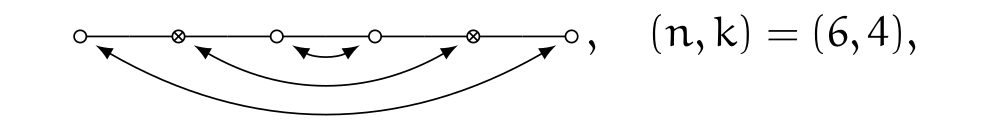}
\end{center}
%  $$\begin{dynkinDiagram}[edge length=.4cm%,labels*={1,...,6}]{A}{o-t-o-o-t-o}\invol{1}{11}\invol{3}{9}\invol{5}{7} \end{dynkinDiagram},\quad (n,k)=(6,4),$$
and the Satake Diagram for $\sua(3,4)$ with the choice of parabolic related to $s=3$ is
\begin{center}
  \includegraphics[scale=0.23]{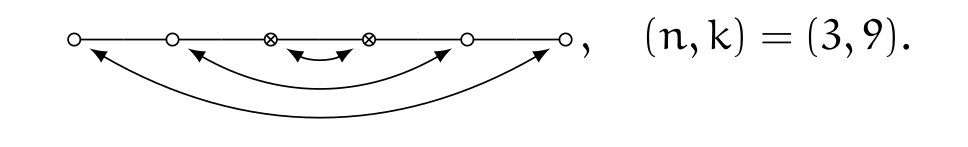}
\end{center}
%$$\begin{dynkinDiagram}[edge length=.4cm%,labels*={1,...,6} ]{A}{o-o-t-t-o-o}\invol{1}{11}\invol{3}{9}\invol{5}{7}\end{dynkinDiagram},\quad (n,k)=(3,9).$$
\begin{itemize}
\item  In case of $s=2$, $r=1$, $t=1$, the embedding in $\bbC^{10}$ of the corresponding accidental CR structure of dimension  $N=16$,  CR dimension $n=6$ and CR codimension $k=4$ is given by:
  $$M^{16}=\left\{\begin{aligned}\bbC^{10}&\ni(z_{a},u_{b},v_{c},w_{de}), a,b,c,d,e=1,2, s.t.\\
  &\mathrm{Im}(w_{12})=\mathrm{Im}\big(z_{1}\bar{z}_{2}+u_{1}\bar{v}_{2}+v_{1}\bar{u}_{2}\big)\\
  &\mathrm{Re}(w_{11})=z_{1}\bar{z}_{1}+u_{1}\bar{v}_{1}+v_{1}\bar{u}_{1}\\
  &\mathrm{Re}(w_{21})=\mathrm{Re}\big(z_{2}\bar{z}_{1}+u_{2}\bar{v}_{1}+v_{2}\bar{u}_{1}\big)\\
   &\mathrm{Re}(w_{22})=z_{2}\bar{z}_{2}+u_{2}\bar{v}_{2}+v_{2}\bar{u}_{2}
  \end{aligned}\right\}.$$
  By construction this \emph{accidental} CR structure has $\sug(3,4)$ group of the CR automorphisms.
\item In case of $s=3$, $r=1$, $t=0$, the embedding in $\bbC^{12}$ of the corresponding accidental CR structure of dimension  $N=15$,  CR dimension $n=3$ and CR codimension $k=9$ is given by:
  $$M^{15}=\left\{\begin{aligned}\bbC^{12}&\ni(z_{a},w_{bc}), a,b,c=1,2,3, s.t.\\
  &w_{12}-\bar{w}_{12}=z_{1}\bar{z}_{2}-\bar{z}_{1}z_{2}\\
  &w_{13}-\bar{w}_{13}=z_{1}\bar{z}_{3}-\bar{z}_{1}z_{3}\\
   &w_{23}-\bar{w}_{23}=z_{2}\bar{z}_{3}-\bar{z}_{2}z_{3}\\
  &w_{11}+\bar{w}_{11}=2z_{1}\bar{z}_{1}\\
   &w_{21}+\bar{w}_{21}=z_{2}\bar{z}_{1}+\bar{z}_{2}z_{1}\\
  &w_{22}+\bar{w}_{22}=2z_{2}\bar{z}_{2}\\
  &w_{31}+\bar{w}_{31}=z_{3}\bar{z}_{1}+\bar{z}_{3}z_{1}\\
  &w_{32}+\bar{w}_{32}=z_{3}\bar{z}_{2}+\bar{z}_{3}z_{2}\\
   &w_{33}+\bar{w}_{33}=2z_{3}\bar{z}_{3}
  \end{aligned}\right\}.$$
  By construction this \emph{accidental} CR structure has $\sug(3,4)$ group of the CR automorphisms.
  \end{itemize}
\end{example}
\begin{example}\label{exsu44} 
  In case of $\sug(4,4)$ we have $p=q=4$, i.e. $r=0$ and $t=4-s>0$. This shows that $s$ can only assume \emph{three} values $s=1,2,3$. The Satake Diagram for $\sua(4,4)$ is
  $$\begin{dynkinDiagram}[edge length=.4cm%,labels*={1,...,6}
  ]{A}{ooooooo}
\invol{1}{7}\invol{2}{6}\invol{3}{5}
  \end{dynkinDiagram},$$
  and the three possible choices of parabolics leading to the 3 accidental CR structures are:
  \begin{itemize}
  \item[(i)]
    $\begin{dynkinDiagram}[edge length=.4cm]{A}{tooooot}
\invol{1}{7}\invol{2}{6}\invol{3}{5}
  \end{dynkinDiagram},$ with $(n,k)=(6,1)$,
  \item[(ii)]
    $\begin{dynkinDiagram}[edge length=.4cm]{A}{otoooto}
\invol{1}{7}\invol{2}{6}\invol{3}{5}
  \end{dynkinDiagram},$ with $(n,k)=(8,4)$,
  \item[(iii)]
    $\begin{dynkinDiagram}[edge length=.4cm]{A}{oototoo}
\invol{1}{7}\invol{2}{6}\invol{3}{5}
  \end{dynkinDiagram}$, with $(n,k)=(6,9)$.
    \end{itemize}
  In case (i) we have the familiar hypersurface type 13-dimensional hyperquadric CR manifold
  $$M^{13}=\left\{\begin{aligned}\bbC^{7}&\ni(u_{A},v_{B},w), A,B=1,2,3, s.t.\\
  &\mathrm{Re}\big(w\big)=2\mathrm{Re}\big(u_{1}\bar{v}_{1}+u_{2}\bar{v}_{2}+u_{3}\bar{v}_{3}\big)\end{aligned}\right\}$$
  in $\bbC^{7}$ with the CR automorphisms group $\sug(4,4)$.

  In case (ii) we have an \emph{accidental} 20-dimensional CR manifold
  $$M^{20}=\left\{
\begin{aligned}\bbC^{12}
&\,\ni\,
(u_{ab},v_{cd},w_{ef}), a,b,c,d,e,f=1,2, s.t.
\\
\mathrm{Im}\big(w_{12}\big)
&\,=\,\mathrm{Im}\big(u_{11}\bar{v}_{21}+u_{12}\bar{v}_{22}+v_{11}\bar{u}_{21}+v_{12}\bar{u}_{22}\big)
\\
\mathrm{Re}\big(w_{11}\big)
&\,=\,
2\mathrm{Re}\big(u_{11}\bar{v}_{11}+u_{12}\bar{v}_{12}\big)
\\
\mathrm{Re}\big(w_{21}\big)
&\,=\,
\mathrm{Re}\big(u_{21}\bar{v}_{11}+u_{22}\bar{v}_{12}+v_{21}\bar{u}_{11}+v_{22}\bar{u}_{12}\big)
\\
\mathrm{Re}\big(w_{22}\big)
&\,=\,
2\mathrm{Re}\big(u_{21}\bar{v}_{21}+u_{22}\bar{v}_{22}\big)
\end{aligned}
\right\}$$
  of CR dimension $n=8$ and CR codimension $k=4$ 
  in $\bbC^{12}$ with the CR automorphisms group $\sug(4,4)$.

   In case (iii) we have an \emph{accidental} 21-dimensional CR manifold
  $$M^{21}=\left\{
\begin{aligned}\bbC^{15}&\ni(u_{a},v_{b},w_{cd}), a,b,c,d=1,2,3, s.t.
\\
&w_{12}-\bar{w}_{12}
=
u_1\bar{v}_2+v_1\bar{u}_2-\bar{u}_1v_2-\bar{v}_1u_2
\\
&w_{13}-\bar{w}_{13}
=
u_1\bar{v}_3+v_1\bar{u}_3-\bar{u}_1v_3-\bar{v}_1u_3
\\
&w_{23}-\bar{w}_{23}
=
u_2\bar{v}_3+v_2\bar{u}_3-\bar{u}_2v_3-\bar{v}_2u_3
\\
&w_{11}+\bar{w}_{11}
=
2\big(u_1\bar{v}_1+v_1\bar{u}_1\big)
\\
&w_{21}+\bar{w}_{21}
=
u_2\bar{v}_1+v_2\bar{u}_1+\bar{u}_2v_1+\bar{v}_2u_1
\\
&w_{22}+\bar{w}_{22}
=
2\big(u_2\bar{v}_2+v_2\bar{u}_2\big)
\\
&w_{31}+\bar{w}_{31}
=
u_3\bar{v}_1+v_3\bar{u}_1+\bar{u}_3v_1+\bar{v}_3u_1
\\
&w_{32}+\bar{w}_{32}
=
u_3\bar{v}_2+v_3\bar{u}_2+\bar{u}_3v_2+\bar{v}_3u_2
\\
&w_{33}+\bar{w}_{33}
=
2\big(u_3\bar{v}_3+v_3\bar{u}_3\big)
\\
\end{aligned}
\right\}$$
  of CR dimension $n=6$ and CR codimension $k=9$ 
  in $\bbC^{15}$, again with the CR automorphisms group $\sug(4,4)$.
\end{example}
\begin{example}\label{exsu35} 
  In the case of $\sug(3,5)$ we have $p=3, q=5$, i.e. $r=2$ and $t=3-s\geq 0$. This shows that again $s$ can only assume \emph{three} values $s=1,2,3$. The Satake Diagram for $\sua(3,5)$ is
  $$\begin{dynkinDiagram}[edge length=.4cm%,labels*={1,...,6}
  ]{A}{ooo*ooo}
\invol{1}{7}\invol{2}{6}\invol{3}{5}
  \end{dynkinDiagram},$$
  and the three possible choices of parabolics leading to the 3 accidental CR structures are:
  \begin{itemize}
  \item[(i)]
    $\begin{dynkinDiagram}[edge length=.4cm]{A}{too*oot}
\invol{1}{7}\invol{2}{6}\invol{3}{5}
  \end{dynkinDiagram},$ with $(n,k)=(6,1)$,
  \item[(ii)]
    $\begin{dynkinDiagram}[edge length=.4cm]{A}{oto*oto}
\invol{1}{7}\invol{2}{6}\invol{3}{5}
  \end{dynkinDiagram},$ with $(n,k)=(8,4)$,
  \item[(iii)]
    $\begin{dynkinDiagram}[edge length=.4cm]{A}{oot*too}
\invol{1}{7}\invol{2}{6}\invol{3}{5}
  \end{dynkinDiagram}$, with $(n,k)=(6,9)$.
  \end{itemize}
 In case (i) we again have a hypersurface type 13-dimensional hyperquadric CR manifold
  $$M^{13}=\left\{\begin{aligned}\bbC^{7}&\ni(z_{1a},u_{1b},v_{1c},w_{11}), a,b,c=1,2, s.t.\\
  &w_{11}+\bar{w}_{11}=2\big(z_{11}\bar{z}_{11}+z_{12}\bar{z}_{12}+u_{11}\bar{v}_{11}+u_{12}\bar{v}_{12}+v_{11}\bar{u}_{11}+v_{12}\bar{u}_{12}\big)\end{aligned}\right\}$$
  in $\bbC^{7}$ with the CR automorphisms group $\sug(3,5)$.

  In case (ii) we have an \emph{accidental} 20-dimensional CR manifold
  $$M^{20}=\left\{\begin{aligned}\bbC^{12}
&\,\ni\,
(z_{ab},u_{c1}v_{d1},w_{ef}), a,b,c,d,e,f=1,2, s.t.
\\
\mathrm{Im}\big(w_{12}\big)
&\,=\,
\mathrm{Im}\big(z_{11}\bar{z}_{21}+z_{12}\bar{z}_{22}+u_{11}\bar{v}_{21}+v_{11}\bar{u}_{21}\big)
\\
w_{11}+\bar{w}_{11}
&\,=\,
2\big(z_{11}\bar{z}_{11}+z_{12}\bar{z}_{12}+u_{11}\bar{v}_{11}+v_{11}\bar{u}_{11}\big)
\\
\mathrm{Re}\big(w_{21}\big)
&\,=\,
\mathrm{Re}\big(z_{21}\bar{z}_{11}+z_{22}\bar{z}_{12}+u_{21}\bar{v}_{11}+v_{21}\bar{u}_{11}\big)
\\
w_{22}+\bar{w}_{22}
&\,=\,
2\big(z_{21}\bar{z}_{21}+z_{22}\bar{z}_{22}+u_{21}\bar{v}_{21}+v_{21}\bar{u}_{21}\big)
\end{aligned}\right\}$$
  of CR dimension $n=8$ and CR codimension $k=4$ 
  in $\bbC^{12}$ with the CR automorphisms group $\sug(3,5)$.

   In case (iii) we have an \emph{accidental} 21-dimensional CR manifold
  $$M^{21}=\left\{\begin{aligned}\bbC^{15}&\ni(z_{a\mu},w_{bc}), a,b,c=1,2,3,\,\mu=1,2\,\, s.t.\\
   &w_{12}-\bar{w}_{12}=z_{11}\bar{z}_{21}+z_{12}\bar{z}_{22}-\bar{z}_{11}z_{21}-\bar{z}_{12}z_{22}\\
   &w_{13}-\bar{w}_{13}=z_{11}\bar{z}_{31}+z_{12}\bar{z}_{32}-\bar{z}_{11}z_{31}-\bar{z}_{12}z_{32}\\
   &w_{23}-\bar{w}_{23}=z_{21}\bar{z}_{31}+z_{22}\bar{z}_{32}-\bar{z}_{21}z_{31}-\bar{z}_{22}z_{32}\\
   &w_{11}+\bar{w}_{11}=2(z_{11}\bar{z}_{11}+z_{12}\bar{z}_{12})\\
   &w_{21}+\bar{w}_{21}=z_{21}\bar{z}_{11}+z_{22}\bar{z}_{12}+\bar{z}_{21}z_{11}+\bar{z}_{22}z_{12}\\
   &w_{22}+\bar{w}_{22}=2(z_{21}\bar{z}_{21}+z_{22}\bar{z}_{22})\\
   &w_{31}+\bar{w}_{31}=z_{31}\bar{z}_{11}+z_{32}\bar{z}_{12}+\bar{z}_{31}z_{11}+\bar{z}_{32}z_{12}\\
   &w_{32}+\bar{w}_{32}=z_{31}\bar{z}_{21}+z_{32}\bar{z}_{22}+\bar{z}_{31}z_{21}+\bar{z}_{32}z_{22}\\
    &w_{33}+\bar{w}_{33}=2(z_{31}\bar{z}_{31}+z_{32}\bar{z}_{32})
\end{aligned}\right\}$$
  of CR dimension $n=6$ and CR codimension $k=9$ 
  in $\bbC^{15}$, again with the CR automorphisms group $\sug(3,5)$.
\end{example}
%\begin{example}\label{exsu36} 
%  In the case of $\sug(3,6)$ we have $p=3, q=6$, i.e. $r=3$ and $t=3-s\geq 0$. This again shows that $s$ can only assume \emph{three} values $s=1,2,3$. The Satake Diagram for $\sua(3,6)$ is
%  $$\begin{dynkinDiagram}[edge length=.4cm%,labels*={1,...,6}]{A}{ooo**ooo}
%\invol{1}{8}\invol{2}{7}\invol{3}{6}
%  \end{dynkinDiagram},$$
%  and the three possible choices of parabolics leading to the 3 accidental CR structures are:
%  \begin{itemize}
%  \item[(i)]
%    $\begin{dynkinDiagram}[edge length=.4cm]{A}{too**oot}
%\invol{1}{8}\invol{2}{7}\invol{3}{6}
%  \end{dynkinDiagram},$ with $(n,k)=(7,1)$
%  \item[(ii)]
%    $\begin{dynkinDiagram}[edge length=.4cm]{A}{oto**oto}
%\invol{1}{8}\invol{2}{7}\invol{3}{6}
%  \end{dynkinDiagram},$ with $(n,k)=(10,4)$, and 
%  \item[(iii)]
%    $\begin{dynkinDiagram}[edge length=.4cm]{A}{oot**too}
%\invol{1}{8}\invol{2}{7}\invol{3}{6}
%  \end{dynkinDiagram}$, with $(n,k)=(9,9)$.
%  \end{itemize}
%  \end{example}
\section{Why examples of Sections 2 and 3 are flat parabolic geometries?}

\subsection{Proofs of Theorems \ref{e2} and \ref{e3}}
In the rest of this section we give justifications for Theorems \ref{e2}, \ref{e3}, \ref{so}, \ref{so*} and \ref{su}. Since the idea is the same for all of them, we will only concentrate on the proofs of Theorems \ref{e2} and \ref{e3} related to $\mathfrak{e}_6$. This is, anyhow, not so important, since the proofs of Theorems \ref{e3}, \ref{e2}, \ref{so}, \ref{so*} and \ref{su} follow directly from our Section 5 and (in full generality) from the Reference \cite{medori}.

The basic observation (a nontrivial one!), valid for all Theorems \ref{e3}, \ref{e2}, \ref{so}, \ref{so*} and \ref{su}, is that the entire CR geometry of CR manifolds $M^N$ appearing in them, is \emph{totally determined} by the mere geometry of the real distribution $H$. By this we mean that for \emph{all of the CR manifolds} from these Theorems, the complex structure $J$ in $H$ is an object totally determined by the pair $(M^N,H)$. Saying it yet differently: for all the CR manifolds from Theorems \ref{e3}, \ref{e2}, \ref{so} and \ref{so*} the local differential geometry of the CR structure $(M^N,H,J)$ is \emph{the same} as the local geometry of the structure $(M^N,H)$ of a real manifold of dimension $N$ and a rank $2n$ (real) distribution $H$ (with a proper \emph{symbol algebra}). This is the reason why we call the CR structures from Theorems \ref{e3}, \ref{e2}, \ref{so} and \ref{so*} as \emph{accidental} CR structures: these are structures of a vector distribution $H$ on $M^N$, and the $J$ in $H$ is given as a gift, or an \emph{accident}, from the geometric data $(M^N,H)$.

Of course the statement that the local differential geometry of the CR structures $(M^N,H,J)$ is \emph{the same} as the local geometry of the structures $(M^N,H)$ of a real manifold of dimension $N$ and a rank $2n$ (real) distribution $H$, for all the structures in Theorems \ref{e3}, \ref{e2}, \ref{so} and \ref{so*}, requires proofs. We will give them by inspecting all of these cases separately below. \\

%/home/pawel/niebieski/merker/EII_merker_paper_normalized.nb
\begin{roof}{{\bf Theorem}}{\ref{e2}}
  In this case, passing to the real variables $(u^i,v^i,x^i,y^i)$, we can write our CR structure $M^{24}_{E_{II}}$ in the form \eqref{rigid} with 8 defining functions $\Phi^i$ given by:
$$\begin{aligned}
    \Phi^1=\,\,&x^2x^3+x^1x^4+y^2y^3+y^1y^4\\
    \Phi^2=\,\,&x^2x^5+x^1x^6+y^2y^5+y^1y^6\\
    \Phi^3=\,\,&x^7y^1-x^5y^3+x^3y^5-x^1y^7\\
    \Phi^4=\,\,&x^8y^1+x^7y^2+x^6y^3+x^5y^4-x^4y^5-x^3y^6-x^2y^7-x^1y^8\\
    \Phi^5=\,\,&x^2x^7+x^3x^6-x^1x^8-x^4x^5+y^2y^7+y^3y^6-y^1y^8-y^4y^5\\
    \Phi^6=\,\,&x^8y^2-x^6y^4+x^4y^6-x^2y^8\\
    \Phi^7=\,\,&x^4x^7+x^3x^8+y^4y^7+y^3y^8\\
    \Phi^8=\,\,&x^6x^7+x^5x^8+y^6y^7+y^5y^8.
  \end{aligned}$$
  So now our CR manifold $M^{24}_{E_{II}}$ is parametrized by the real coordinates $(u^i,x^i,y^i)$, $i=1,2,\dots,8$, so that the forms $\lambda^i$ anihilating the rank 16 distribution $H$ on $M^{24}_{E_{II}}$, according to the formula \eqref{lambda}, are given by the 1-forms $\lambda^i$, $i=1,2,\dots,8$, of the Pfaffian system from Corollary \ref{co13}.
  %$$\begin{aligned}
%    \lambda^1=\,\,&\der u^1+\tfrac12\big(x^1\der y^4+x^2\der y^3+x^3\der y^2+x^4\der y^1-y^1\der x^4-y^2\der x^3-y^3\der x^2-y^4\der x^1\big)\\
 %   \lambda^2=\,\,&\der u^2+\tfrac12\big(x^1\der y^6+x^2\der y^5+x^5\der y^2+x^6\der y^1-y^1\der x^6-y^2\der x^5-y^5\der x^2-y^6\der x^1\big)\\
  %  \lambda^3=\,\,&\der u^3+\tfrac12\big(x^1\der x^7-x^3\der x^5+x^5\der x^3-x^7\der x^1+y^1\der y^7-y^3\der y^5+y^5\der y^3-y^7\der y^1\big)\\
   % \lambda^4=\,\,&\der u^4+\tfrac12\big(x^1\der x^8+x^2\der x^7+x^3\der x^6+x^4\der x^5-x^5\der x^4-x^6\der x^3-x^7\der x^2-x^8\der x^1+\\&\quad\quad\quad\quad y^1\der y^8+y^2\der y^7+y^3\der y^6+y^4\der y^5-y^5\der y^4-y^6\der y^3-y^7\der y^2-y^8\der y^1\big)\\
    %\lambda^5=\,\,&\der u^5+\tfrac12\big(y^1\der x^8-y^2\der x^7-y^3\der x^6+y^4\der x^5+y^5\der x^4-y^6\der x^3-y^7\der x^2+y^8\der x^1-\\&\quad\quad\quad\quad x^1\der y^8+x^2\der y^7+x^3\der y^6-x^4\der y^5-x^5\der y^4+x^6\der y^3+x^7\der y^2-x^8\der y^1\big)\\
    %\lambda^6=\,\,&\der u^6+\tfrac12\big(x^2\der x^8-x^4\der x^6+x^6\der x^4-x^8\der x^2+y^2\der y^8-y^4\der y^6+y^6\der y^4-y^8\der y^2\big)\\
    %\lambda^7=\,\,&\der u^7+\tfrac12\big(x^3\der y^8+x^4\der y^7+x^7\der y^4+x^8\der y^3-y^3\der x^8-y^4\der x^7-y^7\der x^4-y^8\der x^3\big)\\
    %\lambda^8=\,\,&\der u^8+\tfrac12\big(x^5\der y^8+x^6\der y^7+x^7\der y^6+x^8\der y^5-y^5\der x^8-y^6\der x^7-y^7\der x^6-y^8\der x^5\big).
  %\end{aligned}
  %$$
 Now we have the following Lemma\footnote{Actually, to shorten the expressions for the $\lambda$'s given here, and to have more symmetric formulas for the EDS later, we changed the original coordinates $u^i$ appearing in \eqref{lambda} to more suitable $u^i$'s appearing here, and slightly rescaled the $\lambda$'s in \eqref{lambda}.}.
  \begin{lemma}\label{e3l}
    The Lie algebra of infinitesimal symmetries of the Pfaffian system $[\lambda^1,\lambda^2,\dots,\lambda^8]$ on
    $M^{24}_{E_{II}}$, with forms $\lambda^i$ as in Corollary \ref{co13}, or what is the same, the Lie algebra of infinitesimal symmetries of the rank 16 distribution $H=[\lambda^1,\lambda^2,\dots,\lambda^8]^\perp$, is the real form of the simple exceptional Lie algebra $\mathfrak{e}_6$ with Satake diagram
\begin{dynkinDiagram}[edge length=.4cm]{E}{oooooo}
\invol{1}{6}\invol{3}{5}
 \end{dynkinDiagram}.
%     \begin{dynkinDiagram}[edge length=.4cm]{E}{oo***o}\invol{1}{6}%\invol{3}{5} \end{dynkinDiagram}.
  \end{lemma}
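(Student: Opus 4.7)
\textbf{Proof plan for Lemma \ref{e3l}.} The strategy is to apply Tanaka's Theorem \ref{tansym}. First, I would verify that the eight $1$-forms $\lambda^1,\ldots,\lambda^8$ are pointwise linearly independent, so that $H=[\lambda^1,\ldots,\lambda^8]^\perp$ is a rank-$16$ distribution on the $24$-dimensional manifold $M^{24}_{E_{II}}$. Using a dual frame $\{X_i,Y_i\}_{i=1}^{8}$ of $H$ (obtainable as in Example \ref{ex1} by solving $X\hook\lambda^a=0$ component by component), I would then compute the Lie brackets $[X_i,X_j]$, $[X_i,Y_j]$, $[Y_i,Y_j]$ modulo $H$ at the origin. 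Since each $\lambda^a$ is of the form $\der u^a+(\text{quadratic in }x,y)\,\der x+\cdots$, the differential $\der\lambda^a$ has constant coefficients when restricted to $H\wedge H$, which shows that the growth vector is the constant $(16,24)$ and that the symbol algebra $\mathfrak{n}_-=\mathfrak{n}_{-2}\oplus\mathfrak{n}_{-1}$ has fixed structure constants across $M^{24}_{E_{II}}$.

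The bulk of the work is identifying this symbol algebra with the negative part of the appropriate $\mathfrak{e}_6$ gradation. I would exhibit an explicit isomorphism as follows: the eight ``vertical'' vectors $\partial_{u^1},\ldots,\partial_{u^8}$ span $\mathfrak{n}_{-2}$, and the $16$ horizontal vectors split as $\mathfrak{n}_{-1}\cong\mathbb{C}^8$ under the natural almost complex structure $J\,X_i=Y_i$. The induced bracket $\mathfrak{n}_{-1}\times\mathfrak{n}_{-1}\to\mathfrak{n}_{-2}$ can be read off from the coefficients of the eight quadratic forms $\Phi^a$ visible in the rigid embedding of Theorem \ref{e2}; by inspection, these are the real and imaginary parts of eight sesquilinear pairings on $\mathbb{C}^8$ indexed in exactly the pattern of the $8$-dimensional contact-type module appearing at grading $|2|$ when one crosses the second node of the $E_{II}$ Satake diagram. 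Matching them against the structure constants of $\mathfrak{e}_6$ (equivalently, recognising $\mathfrak{n}_-$ as a Heisenberg-type central extension of the half-spin representation of the semisimple part of $\mathfrak{g}_0$) yields $\mathfrak{n}_-\simeq(\mathfrak{e}_6)_{-2}\oplus(\mathfrak{e}_6)_{-1}$ for the $E_{II}$ real form.

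Once the symbol is identified, Theorem \ref{tansym} identifies the symmetry Lie algebra with the Tanaka prolongation $\mathfrak{g}_T(\mathfrak{n}_-)$. A standard result in parabolic geometry asserts that the prolongation of the negative part of a $|k|$-graded simple Lie algebra recovers the full simple algebra (see \cite{tanaka,CS}), so $\mathfrak{g}_T(\mathfrak{n}_-)\simeq\mathfrak{e}_6$, and the real form is pinned down by the signature of the bracket $\mathfrak{n}_{-1}\wedge\mathfrak{n}_{-1}\to\mathfrak{n}_{-2}$ together with the $J$-compatibility, forcing it to be $E_{II}$.

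The main obstacle will be the second step: producing the \emph{explicit} isomorphism between the symbol algebra computed from the $\Phi^a$ and the abstract symbol of $(\mathfrak{e}_6,P_{(2)})$ in the $E_{II}$ real form. Rather than compute the full prolongation by hand, I would lean on the classification of graded simple real Lie algebras: the dimensions $(\dim\mathfrak{n}_{-1},\dim\mathfrak{n}_{-2})=(16,8)$, the existence of a compatible integrable complex structure $J$ on $\mathfrak{n}_{-1}$, and the signature of the Hermitian data packaged in the $\Phi^a$'s together determine the graded real algebra uniquely. A complementary and purely mechanical check is to solve the infinitesimal symmetry PDEs $(\mathcal{L}_Y\lambda^a)\dz\lambda^1\dz\cdots\dz\lambda^8=0$ in a computer algebra system, count the dimension of the solution space, confirm it equals $78=\dim\mathfrak{e}_6$, and then read off a Cartan subalgebra and root system to certify the $E_{II}$ Satake type.
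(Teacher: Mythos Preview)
Your plan is on the right track\,---\,the paper also invokes Theorem~\ref{tansym} and the Tanaka prolongation\,---\,but the execution diverges from the paper's in one important place, and there is a soft spot you should tighten.

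\textbf{Where the routes differ.} After writing down the constant symbol $\mathfrak{n}_-=\mathfrak{n}_{-2}\oplus\mathfrak{n}_{-1}$, the paper does \emph{not} try to match $\mathfrak{n}_-$ against a known $\mathfrak{e}_6$ gradation. Instead it computes the first step of the Tanaka prolongation directly: solving the linear ``strata-preserving derivation'' equations for $A\in\mathfrak{n}_0$ yields a $30$-dimensional space, and by finding an $A$-invariant symmetric form on $\mathfrak{n}_{-2}$ one recognises $\mathfrak{n}_0\cong 2\mathbb{R}\oplus\mathfrak{so}(3,5)$. The paper then reports $\dim\mathfrak{n}_{\pm1}=16$, $\dim\mathfrak{n}_{\pm2}=8$, $\mathfrak{n}_{k}=0$ for $|k|>2$, obtains total dimension $78$, and reads off the crossed Satake diagram from the isomorphism type of $\mathfrak{n}_0$. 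This bypasses your ``main obstacle'' entirely: one never needs an a~priori isomorphism of $\mathfrak{n}_-$ with $(\mathfrak{e}_6)_-$; the identification of the real form drops out of the computation of $\mathfrak{n}_0$.

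\textbf{What needs tightening in your route.} The statement you call a ``standard result''\,---\,that the Tanaka prolongation of the negative part of a $|k|$-graded simple Lie algebra always recovers $\mathfrak{g}$\,---\,is not true without qualification. This is exactly the content of Yamaguchi's theorem \cite{Yam}, which lists the gradations for which the prolongation of $\mathfrak{g}_-$ is strictly larger than $\mathfrak{g}$ (the projective and contact gradations of type $A$ and $C$, for instance). Your argument is salvageable, but you must (i) actually exhibit the isomorphism $\mathfrak{n}_-\simeq(\mathfrak{e}_6^{E_{II}})_-$ for the gradation by $\{\alpha_1,\alpha_6\}$ (not ``the second node''\,---\,the crossed roots here are the two outer ones, conjugate under the Satake involution), and (ii) check in Yamaguchi's list that this gradation is not one of the exceptions. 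Both are doable, but neither is as automatic as your sketch suggests. The paper's direct computation of $\mathfrak{n}_0$ is, in practice, the cleaner way to pin down the real form and close the argument; your alternative CAS check of the $78$-dimensional symmetry algebra is essentially the ``brute force'' option the paper mentions and would also suffice.
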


  \begin{roof}{{\bf Lemma}}{\ref{e3l}} There are at least three ways of proving the Lemma:
    \begin{itemize}
    \item By brute force: solve the PDEs \eqref{syd} for the symmetries $Y$ of the distribution structure $(M,H)$ defined as the annihilator of the Pfaffian system  $[\lambda^1,\lambda^2,\dots,\lambda^8]$ given in the Lemma. This can be done e.g. by I. Anderson's \emph{Differential geometry package} of Maple. This package can also classify the obtained Lie algebra of symmetries, showing that the algebra is of type $\mathfrak{e}_6$. But for this one needs quite a powerful computer and the commercial software Maple.
    \item By the \emph{Cartan equivalence method} applied to forms $(\lambda^i)$ given on a manifold $M^{24}_{E_{II}}$ up to the transformations $\lambda^i\mapsto a^i{}_j\lambda^j$, where the real $8\times 8$ matrices $(a^i{}_j)$ belong to $GL(8,\bbR)$. This is very tedious, and as a result gives $78$ linearly independent 1-forms on a certain 78-dimensional manifold $\mathcal G$ which satisfies the EDS of the Maurer-Cartan forms on the appropriate real form of the simple exceptional Lie group $E_6$.
      \item By the \emph{Tanaka prolongation method}, which we will follow in this exposition.  
    \end{itemize}

    \noindent
The proof of the Lemma is based on the Tanaka's Theorem \ref{tansym}. To see this we extend the eight 1-forms $\lambda^i$ generating the Pfaffian system from the Lemma to a coframe $(\lambda^A)$, $A=1,2\dots 24$, on $M^{24}_{E_{III}}$ by setting
$$\lambda^{i+8}=\der x^i,\quad \quad \lambda^{i+16}=\der y^i,\quad i=1,2,\dots, 8.$$
We calculate all the exterior derivatives $\der\lambda^A$ obtaining:
 \begin{equation}\begin{aligned}
    \der\lambda^1=\,\,&\lambda^9\dz \lambda^{20}+\lambda^{10}\dz \lambda^{19}+\lambda^{11}\dz \lambda^{18}+\lambda^{12}\dz \lambda^{17}\\
    \der\lambda^2=\,\,&\lambda^9\dz \lambda^{22}+\lambda^{10}\dz \lambda^{21}+\lambda^{13}\dz \lambda^{18}+\lambda^{14}\dz \lambda^{17}\\
    \der\lambda^3=\,\,&\lambda^9\dz \lambda^{15}-\lambda^{11}\dz \lambda^{13}+\lambda^{17}\dz \lambda^{23}-\lambda^{19}\dz \lambda^{21}\\
    \der\lambda^4=\,\,&\lambda^{9}\dz \lambda^{16}+\lambda^{10}\dz \lambda^{15}+\lambda^{11}\dz \lambda^{14}+\lambda^{12}\dz \lambda^{13}+\lambda^{17}\dz \lambda^{24}+\lambda^{18}\dz \lambda^{23}+\\&\lambda^{19}\dz \lambda^{22}+\lambda^{20}\dz \lambda^{21}\\
    \der\lambda^5=\,\,&-\lambda^9\dz \lambda^{24}+\lambda^{10}\dz \lambda^{23}+\lambda^{11}\dz \lambda^{22}-\lambda^{12}\dz \lambda^{21}-\lambda^{13}\dz \lambda^{20}+\lambda^{14}\dz \lambda^{19}+\\&\lambda^{15}\dz \lambda^{18}-\lambda^{16}\dz \lambda^{17}\\
    \der\lambda^6=\,\,&\lambda^{10}\dz \lambda^{16}-\lambda^{12}\dz \lambda^{14}+\lambda^{18}\dz \lambda^{24}-\lambda^{20}\dz \lambda^{22}\\
    \der\lambda^7=\,\,&\lambda^{11}\dz \lambda^{24}+\lambda^{12}\dz \lambda^{23}+\lambda^{15}\dz \lambda^{20}+\lambda^{16}\dz \lambda^{19}\\
    \der\lambda^8=\,\,&\lambda^{13}\dz \lambda^{24}+\lambda^{14}\dz \lambda^{23}+\lambda^{15}\dz \lambda^{22}+\lambda^{16}\dz \lambda^{21}\\
    \der\lambda^\mu=\,\,&0,\quad\forall \mu=9,10,\dots,24.
  \end{aligned}
  \label{lg}\end{equation}
We thus have \begin{equation}\der\lambda^A=-\tfrac12c^A{}_{BD}\lambda^B\dz\lambda^D,\label{mc}\end{equation}
with \emph{all} the coeffcients $c^A{}_{BD}=-c^A{}_{DB}$ being \emph{constants}. Thus our 24-dimensional manifold $M^{24}_{E_{II}}$ with the coframe $(\lambda^A)$ can be locally consider to be a Lie group, say $\mathcal N$, for which the coframe $(\lambda^A)$ is a coframe of Maurer-Cartan forms. Looking at the structure constants of the Lie algebra $\mathfrak{n}$ of this group, which can be read off \eqref{lg} via \eqref{mc}, we see that this Lie group is \emph{nilpotent}. Indeed, taking the vector fields $X_A$, $A=1,2,\dots,24$ on $\mathcal N$ dual to the coframe 1-forms $\lambda^B$, $X_A\hook\lambda^B=\delta^B_A$, we see that they form a 2-step nilpotent Lie algebra
\begin{equation}\mathfrak{n}=\mathfrak{n}_{\minu 2}\oplus\mathfrak{n}_{\minu 1}\label{n1}\end{equation}
with
\begin{equation}
  \mathfrak{n}_{\minu 1}=\Span_\bbR(X_9,X_{10},\dots,X_{24}),\quad\quad \mathfrak{n}_{\minu 2}=\Span_\bbR(X_1,X_2,\dots,X_8).\label{n2}\end{equation}
We used here the commutation relations of vector fields $X_A$ obtained from \eqref{lg}-\eqref{mc} via the formula $[X_A,X_B]=c^E{}_{AB}X_E$. Now note that our distribution $H$ from the Lemma \ref{e3l} is precisely the distrubution spanned, over the functions on $\mathcal N$, by the left invariant vector fields $X_\mu$, $\mu=9,10,\dots,24$, which span the $\mathfrak{n}_{\minu 1}$ part of the 2-step nilpotent Lie algebra $\mathfrak{n}$.

So, what is the Lie algebra of symmetries $\mathfrak{aut}(H)$ of our distribution $H$ from the Lemma?

Via the Tanaka theory this is a Lie algebra \emph{isomorphic} to the Tanaka prolongation $\mathfrak{g}_T(\mathfrak{n})$ of the nilpotent Lie algebra $\mathfrak{n}=\Span_\bbR(X_A)$. So to determine the Lie algebra of automorphisms of our rank 16 distribution $H$ it is enough to calculate $\mathfrak{g}_T(\mathfrak{n})$ for $\mathfrak{n}$ in \eqref{n1}-\eqref{n2}.  

Calculating the Tanaka prolongation $\mathfrak{g}_T(\mathfrak{n}_{\minu})$ is an algorithmic inductive process involving only linear algebra applied to $\mathfrak{n}_\minu$ and its successors $\mathfrak{n}_j$, $j\geq 0$. One first calculates $\mathfrak{n}_0$, then $\mathfrak{n}_1$, etc. Here, for brevity we will only show in details how  $\mathfrak{n}_0$ for our $\mathfrak{n}$, as in \eqref{n1}-\eqref{n2}, is calculated

The elements of $\mathfrak{n}_0$ are derivations $A$ of $\mathfrak{n}$ preserving the strata $\mathfrak{n}_{\minu 1}$ and $\mathfrak{n}_{\minu 2}$ in $\mathfrak{n}$. Thus the matrix elements $(A_B{}^C)$ of a linear map $A:\mathfrak{n}\to \mathfrak{n}$ belonging to $\mathfrak{n}_0$ must satisfy, in the basis $(X_A)$, the following equations:
\begin{enumerate}
\item $A_i{}^\mu=0$ and $A_\mu{}^i=0$, for $i=1,2,\dots 8$, $\mu=9,10,\dots, 24$, (preservation of the strata)
  \item $c^E{}_{BD}A_E{}^F-c^F{}_{BE}A_D{}^E+c^F{}_{DE}A_B{}^E=0$, $B,D,F=1,2,\dots, 24$, (derivation property).
  \end{enumerate}
These \emph{linear} equations for the matrix entries $A_B{}^C$, with $c^B{}_{DE}$ given by \eqref{n1}-\eqref{n2}, have a 30-dimensional space of solutions. Explicitly
\begin{equation}
  (A_B{}^C)=\bma A_i{}^j&|&0\\--&&--\\0&|&A_\mu{}^\nu\ema,\label{abc-bis}\end{equation}
where
$$\scriptscriptstyle{(A_i{}^j)=}\bma \scriptscriptstyle{a_{27}+a_{28}+2 a_{30}}&\scriptscriptstyle{a_{7}}&\scriptscriptstyle{a_{8}}&\scriptscriptstyle{2a_{9}}&\scriptscriptstyle{2a_{10}}&\scriptscriptstyle{a_{11}}&\scriptscriptstyle{a_{12}}&\scriptscriptstyle{0}\\
\scriptscriptstyle{a_{1}}&\scriptscriptstyle{-a_{27}+a_{29}+a_{30}}&\scriptscriptstyle{a_{17}}&\scriptscriptstyle{2a_{18}}&\scriptscriptstyle{2a_{19}}&\scriptscriptstyle{a_{20}}&\scriptscriptstyle{0}&\scriptscriptstyle{a_{12}}\\
\scriptscriptstyle{a_{2}}&\scriptscriptstyle{a_{13}}&\scriptscriptstyle{-a_{28}+a_{29}+a_{30}}&\scriptscriptstyle{2a_{23}}&\scriptscriptstyle{2a_{24}}&\scriptscriptstyle{0}&\scriptscriptstyle{a_{20}}&\scriptscriptstyle{-a_{11}}\\
\scriptscriptstyle{a_{3}}&\scriptscriptstyle{a_{14}}&\scriptscriptstyle{a_{21}}&\scriptscriptstyle{a_{30}}&\scriptscriptstyle{-a_{25}-a_{26}}&\scriptscriptstyle{a_{23}}&\scriptscriptstyle{-a_{18}}&\scriptscriptstyle{a_9}\\
\scriptscriptstyle{a_{4}}&\scriptscriptstyle{a_{15}}&\scriptscriptstyle{a_{22}}&\scriptscriptstyle{a_{25}+a_{26}}&\scriptscriptstyle{a_{30}}&\scriptscriptstyle{a_{24}}&\scriptscriptstyle{-a_{19}}&\scriptscriptstyle{a_{10}}\\
\scriptscriptstyle{a_{5}}&\scriptscriptstyle{a_{16}}&\scriptscriptstyle{0}&\scriptscriptstyle{2a_{21}}&\scriptscriptstyle{2a_{22}}&\scriptscriptstyle{a_{28}-a_{29}+a_{30}}&\scriptscriptstyle{a_{17}}&\scriptscriptstyle{-a_{8}}\\
\scriptscriptstyle{a_{6}}&\scriptscriptstyle{0}&\scriptscriptstyle{a_{16}}&\scriptscriptstyle{-2a_{14}}&\scriptscriptstyle{-2a_{15}}&\scriptscriptstyle{a_{13}}&\scriptscriptstyle{a_{27}-a_{29}+a_{30}}&\scriptscriptstyle{a_{7}}\\
\scriptscriptstyle{0}&\scriptscriptstyle{a_{6}}&\scriptscriptstyle{-a_{5}}&\scriptscriptstyle{2a_{3}}&\scriptscriptstyle{2a_{4}}&\scriptscriptstyle{-a_{2}}&\scriptscriptstyle{a_{1}}&\scriptscriptstyle{-a_{27}-a_{28}}
\ema,$$
and the real matrix $(A_\mu{}^\nu)$ is:
\begin{equation}
  (A_\mu{}^\nu)=\sum_{k=1}^{30} a_k E_k,
  \label{fa1}
\end{equation}
with $16\times 16$ matrices $E_k$, $k=1,2,\dots,30$ given in the 
Appendix~A.
Note that there are 30 real matrix coefficients $a_k$, $k=1,2,\dots,30$, in the matrix $(A_B{}^C)$.

The above matrices $(A_A{}^B)$, are closed with respect to the commutator $([A,A']_B{}^C)$ $=(A_B{}^DA'_D{}^C-A'_B{}^DA_D{}^C)$, as they should be, and form the $\mathfrak{n}_0$ of the Tanaka prolongation $\mathfrak{g}_T(\mathfrak{n})$.

What is this Lie algebra? By looking for a symmetric tensor $g_{ij}=g_{ji}$ invariant, $A_i{}^kg_{kj}+A_j{}^kg_{ik}=\tfrac18\mathrm{Tr}(A)g_{ij}$, under the adjoint action  of the matrix $(A_i{}^j)$ in $\mathfrak{n}_{\minu 2}$, we find that $(g_{ij})$ is a multiple of the numerical matrix
$$(g_{ij})=\bma
0&0&0&0&0&0&0&1\\
0&0&0&0&0&0&-1&0\\
0&0&0&0&0&1&0&0\\
0&0&0&-\tfrac12&0&0&0&0\\
0&0&0&0&-\tfrac12&0&0&0\\
0&0&1&0&0&0&0&0\\
0&-1&0&0&0&0&0&0\\
1&0&0&0&0&0&0&0
\ema.$$
This shows that $A$ acts in $\mathfrak{n}_{\minu 2}$ as the Lie algebra $\mathfrak{cso}(3,5)$. Thus, the Lie algebra $\mathfrak{n}_0$ is
$$\mathfrak{n}_0=\bbR\oplus\mathfrak{cso}(3,5)=\bbR\oplus\bbR\oplus \soa(3,5)=2\bbR\oplus{\raisebox{-1.5\depth}{\begin{turn}{120}\tikzset{/Dynkin diagram/fold style/.style={stealth-stealth,thin,
shorten <=1mm,shorten >=1mm}}
\begin{dynkinDiagram}[edge length=.4cm%,labels*={1,...,6}
  ]{D}{oooo}\dynkinFold{1}{4}
%\invol{1}{6}%\invol{3}{5}
\end{dynkinDiagram}\end{turn}}}
.$$
Further calculation shows that
$\mathfrak{n}_1$ is a Lie algebra of dimension $16=\dim(\mathfrak{n}_{\minu 1})$, that $\dim(\mathfrak{n}_2)=8=\dim(\mathfrak{n}_{\minu 2})$, and that $\mathfrak{n}_k=\{0\}$ for all $k>2$. This shows that the Lie algebra of automorphisms of the distribution $H$ from our Lemma, $\mathfrak{aut}(H)$, being the Tanaka prolongation $\mathfrak{g}_T(\mathfrak{n})=\mathfrak{aut}(H)$, has a \emph{symmetric} gradation
$$\mathfrak{aut}(H)=\mathfrak{n}_{\minu 2}\oplus\mathfrak{n}_{\minu 1}\oplus\mathfrak{n}_0\oplus\mathfrak{n}_1\oplus\mathfrak{n}_2,$$
with its dimension $$78=8+16+30+16+8.$$
This is typical of a \emph{parabolic gradation} in a \emph{simple} Lie algebra. In our case it turns out that this Lie algebra is \begin{dynkinDiagram}[edge length=.4cm%,labels*={1,...,6}
  ]{E}{toooot}
\invol{1}{6}\invol{3}{5}
 \end{dynkinDiagram} with the choice of a parabolic as indicated by the crossings\footnote{Note that if you remove the crossed nodes, you will remain with the Dynkin diagram for $\soa(3,5)$, which together with the crossed nodes, counting as $2\bbR$, gives the calculated $\mathfrak{n}_0$.}.  This finishes the proof of the Lemma. 
  \end{roof}\\
  
  Returning to the proof of Theorem \ref{e2}, we see from the Lemma that our CR structure on $M^{24}_{E_{II}}$ has locally $E_{II}$ symmetric Levi distribution $H$. But the distribution $H$ defines $\mathfrak{n}_0$ which naturally acts in $H$ at every point. We emphasize, that $H$ \emph{itself} defines $\mathfrak{n}_0$ and its action. One therefore is tempted to see if there is any object, say a tensor $T$, in $H$ which is invariant with repsect to this action. So now, we want to see if there exists a rank $\binom{1}{1}$ tensor $J$ in $H$ preserved by the action of $\mathfrak{n}_0$ in $H$, which is the same as asking about the existence of a $16\times 16$ matrix $(T_\mu{}^\nu)$ such that
  \begin{equation}
    T_\mu{}^\rho A_\rho{}^\nu=A_\mu{}^\rho T_\rho{}^\nu\,\,\,\forall \mu,\nu=9,10,\dots, 24\label{inJ}.
  \end{equation}
  Here the matrix $(A\mu{}^\nu)$ is as in \eqref{abc-bis}.

  We solved the equations \eqref{inJ} for an $\mathfrak{n}_0$ invariant tensor $T$ in $H$. It turns out that there is a 2-parameter family of such invariant $T$s, parametrized by real numbers say $\alpha$, $\beta$,
  $$T=\alpha \id_H+\beta J,$$
  with
  \begin{equation}
    J=\bma 0&|& -\mathrm{id}_{8\times 8}\\--&&--\\\mathrm{id}_{8\times 8}&|&0\ema=E_{25}-E_{26}.\label{jot}\end{equation}
  Among such invariant $T$'s there is a unique (up to a sign) $T$ such that  $T^2=-\id_H$. This happens if and only if $\alpha=0$, $\beta=\pm1$. 
 
  So, although we did not assumed any complex structure $J$ in $H$, there is a \emph{prefered one} in there! It is defined uniquely up to a sign by the requirements that $J$ in $H$ satisfies $J^2=-\mathrm{id}_H$ and that it commutes with the action of $\mathfrak{n}_0$. And because $\mathfrak{n}_0$ is also fully defined by $H$, this $J$ is defined by the real distribution $H$ alone.

  If such $J$ were not $\mathfrak{n}_0$ invariant, then when calculating the Tanaka prolongation of the composed structure $(H,J)$, one should not only require that $\mathfrak{n}_0$ consisted of the strata preserving derivations of $\mathfrak{n}$, but that $\mathfrak{n}_0$ consists of those strata preserving derivations in $\mathfrak{n}$ which in addition preserve $J$ in $\mathfrak{n}_{\minu 1}$. In our case, since $J$ commutes with the $\mathfrak{n}_0$, the two Lie algebras `the strata preserving derivations of $\mathfrak{n}$' and `the strata preserving derivations of $\mathfrak{n}$ which also preserve $J$ in $\mathfrak{n}_{\minu 1}$' are the same. Thus the entire Tanaka prolongation of $\mathfrak{n}$ related to the structure of the distribution $H$ alone, and the Tanaka prolongation of the structure of the distribution $H$ with $J$, because it is $H$-defined, are also the same. As a consequence, the structure $(M^{24}_{E_{II}}, H,J)$ has the same Tanaka prolongation as $(M^{24}_{E_{II}}, H)$, and in turn the entire {\bf CR} structure $(M^{24}_{E_{II}},H,J)$ is (locally) $E_{II}$ symmetric.

  Finally one uses the $(M^{24}_{E_{II}},H,J)$ obtained from $(M^{24}_{E_{II}},H)$, checks that the corresponding $(H^\perp)^\bbC\subset Z^*\subset(\mathrm{T}^*M)^\bbC$ is integrable,  and embeds it in $\bbC^{16}$. It follows that this embedding is CR equivalent to the one given in Theorem \ref{e2}.

  This finishes the proof of this theorem.
\end{roof}
\vspace{1cm}

%../E6_signature71_all_checking_embedding.nb
We now pass to the proof of Theorem \ref{e3}. Since it is almost the same as the proof of Theorem \ref{e2} we only give formulae that are different in this theorem when compared to the formulae in Theorem \ref{e2}.

\begin{roof}{{\bf Theorem}}{\ref{e3}}
  This time, the real distribution $H$ in $M^{24}_{E_{III}}$ is given as the annihilator of the Pfaffian forms $[\lambda^1,\dots,\lambda^8]$, which in the real coordinates $(u^i,x^i,y^i)$ are given in Corollary \ref{co14}.
%$$\begin{aligned}
 %   \lambda^1=\,\,&\der u^1+x^1\der y^8+x^2\der y^4+x^3\der y^7+x^4\der y^2+x^5\der y^6+x^6\der y^5+x^7\der y^3+x^8\der y^1\\
  %  \lambda^2=\,\,&\der u^2+x^1\der y^4+y^8\der x^2+y^6\der x^3+x^4\der y^1+x^5\der y^7+y^3\der x^6+x^7\der y^5+y^2 \der x^8\\
   % \lambda^3=\,\,&\der u^3+x^1\der y^7+x^2\der y^6+y^8\der x^3+y^5\der x^4+y^4\der x^5+x^6\der y^2+x^7\der y^1+y^3\der x^8\\
    %\lambda^4=\,\,&\der u^4+x^2\der x^1+x^5\der x^3+x^8\der x^4+x^7\der x^6+y^2\der y^1+y^5\der y^3+y^8\der y^4+y^7\der y^6\\
    %\lambda^5=\,\,&\der u^5+x^1\der y^6+y^7\der x^2+x^3\der y^4+x^4\der y^3+y^8\der x^5+x^6\der y^1+y^2\der x^7+y^5\der x^8\\
    %\lambda^6=\,\,&\der u^6+x^5\der x^1+x^3\der x^2+x^4\der x^7+x^8\der x^6+y^5\der y^1+y^3\der y^2+y^4\der y^7+y^8\der y^6\\
    %\lambda^7=\,\,&\der u^7+x^3\der x^1+x^2\der x^5+x^6\der x^4+x^8\der x^7+y^3\der y^1+y^2\der y^5+y^6\der y^4+y^8\der y^7\\
    %\lambda^8=\,\,&\der u^8+x^1\der y^1+x^2\der y^2+x^3\der y^3+x^4\der y^4+x^5\der y^5+x^6\der y^6+x^7\der y^7+x^8\der y^8.\\
  %\end{aligned}
  %$$
  Again introducing $\lambda^{i+8}=\der x^i$ and $\lambda^{i+16}=\der y^i$, $i=1,\dots,8$, we easily see that these forms satisfy the following EDS:
  $$\begin{aligned}
    \der\lambda^1=\,\,&\lambda^9\dz \lambda^{24}+\lambda^{10}\dz \lambda^{20}+\lambda^{11}\dz \lambda^{23}+\lambda^{12}\dz \lambda^{18}+\lambda^{13}\dz \lambda^{22}+\lambda^{14}\dz \lambda^{21}+\\&\lambda^{15}\dz \lambda^{19}+\lambda^{16}\dz \lambda^{17}\\
    \der\lambda^2=\,\,&\lambda^9\dz \lambda^{20}+\lambda^{24}\dz \lambda^{10}+\lambda^{22}\dz \lambda^{11}+\lambda^{12}\dz \lambda^{17}+\lambda^{13}\dz \lambda^{23}+\lambda^{19}\dz \lambda^{14}+\\&\lambda^{15}\dz \lambda^{21}+\lambda^{18} \dz \lambda^{16}\\
    \der\lambda^3=\,\,&\lambda^9\dz \lambda^{23}+\lambda^{10}\dz \lambda^{22}+\lambda^{24}\dz \lambda^{11}+\lambda^{21}\dz \lambda^{12}+\lambda^{20}\dz \lambda^{13}+\lambda^{14}\dz \lambda^{18}+\\&\lambda^{15}\dz \lambda^{17}+\lambda^{19}\dz \lambda^{16}\\
    \der\lambda^4=\,\,&\lambda^{10}\dz \lambda^9+\lambda^{13}\dz \lambda^{11}+\lambda^{16}\dz \lambda^{12}+\lambda^{15}\dz \lambda^{14}+\lambda^{18}\dz \lambda^{17}+\lambda^{21}\dz \lambda^{19}+\\&\lambda^{24}\dz \lambda^{20}+\lambda^{23}\dz \lambda^{22}\\
    \der\lambda^5=\,\,&\lambda^9\dz \lambda^{22}+\lambda^{23}\dz \lambda^{10}+\lambda^{11}\dz \lambda^{20}+\lambda^{12}\dz \lambda^{19}+\lambda^{24}\dz \lambda^{13}+\lambda^{14}\dz \lambda^{17}+\\&\lambda^{18}\dz \lambda^{15}+\lambda^{21}\dz \lambda^{16}\\
    \der\lambda^6=\,\,&\lambda^{13}\dz \lambda^9+\lambda^{11}\dz \lambda^{10}+\lambda^{12}\dz \lambda^{15}+\lambda^{16}\dz \lambda^{14}+\lambda^{21}\dz \lambda^{17}+\lambda^{19}\dz \lambda^{18}+\\&\lambda^{20}\dz \lambda^{23}+\lambda^{24}\dz \lambda^{22}\\
    \der\lambda^7=\,\,&\lambda^{11}\dz \lambda^9+\lambda^{10}\dz \lambda^{13}+\lambda^{14}\dz \lambda^{12}+\lambda^{16}\dz \lambda^{15}+\lambda^{19}\dz \lambda^{17}+\lambda^{18}\dz \lambda^{21}+\\&\lambda^{22}\dz \lambda^{20}+\lambda^{24}\dz \lambda^{23}\\
    \der\lambda^8=\,\,&\lambda^9\dz \lambda^{17}+\lambda^{10}\dz \lambda^{18}+\lambda^{11}\dz \lambda^{19}+\lambda^{12}\dz \lambda^{20}+\lambda^{13}\dz \lambda^{21}+\lambda^{14}\dz \lambda^{22}+\\&\lambda^{15}\dz \lambda^{23}+\lambda^{16}\dz \lambda^{24}\\
    \der\lambda^\mu=\,\,&0,\quad\forall \mu=9,10,\dots,24.
  \end{aligned}
  $$
  The constancy of the coefficients $C^A{}_{BD}=-C^A{}_{DB}$ in $\der\lambda^A=-\tfrac12 C^{}_{BD}\lambda^B\dz\lambda^D$ above, and their algebraic structure, again show that these are the Maurer-Cartan forms on a nilpotent Lie group, say $\mathcal N$, with the Lie algebra $\mathfrak{n}$ being 2-step nilpotent
  $$\mathfrak{n}=\mathfrak{n}_{\minu 2}\oplus\mathfrak{n}_{\minu 1},$$
  with
  $$\mathfrak{n}_{\minu 1}=\Span_\bbR(X_9,\dots ,X_{24}), \quad \mathfrak{n}_{\minu 2}=\Span_\bbR(X_1,\dots, X_8),$$
  and $X_A\hook \lambda^B=\delta^B_A$.

  The Tanaka prolongation $\mathfrak{g}_T(\mathfrak{n})$ of this nilpotent Lie algebra $\mathfrak{n}$ has $\mathfrak{n}_0$ consisting of matrices (with commutator in matrices) of the form
  $$
  (A_B{}^C)=\bma A_i{}^j&|&0\\--&&--\\0&|&A_\mu{}^\nu\ema,$$
  where
$$(A_i{}^j)=\bma a_{30}&a_{22}&a_{28}&a_{13}&a_{27}&a_{25}&a_{18}&a_7\\
-a_{22}&a_{30}&-a_{21}&a_{9}&-a_{20}&-a_{19}&a_{14}&a_3\\
-a_{28}&a_{21}&a_{30}&a_{12}&a_{26}&a_{24}&a_{17}&a_6\\
-a_{13}&-a_{9}&-a_{12}&a_{30}&-a_{11}&-a_{10}&-a_{8}&-a_1\\
-a_{27}&a_{20}&-a_{26}&a_{11}&a_{30}&a_{23}&a_{16}&a_5\\
-a_{25}&a_{19}&-a_{24}&a_{10}&-a_{23}&a_{30}&a_{15}&-a_4\\
-a_{18}&-a_{14}&-a_{17}&a_{8}&-a_{16}&-a_{15}&a_{30}&-a_2\\
a_{7}&a_{3}&a_{6}&-a_{1}&a_{5}&-a_{4}&-a_{2}&a_{30}
\ema,$$
and the real matrix $(A_\mu{}^\nu)$ is:
\begin{equation}
  (A_\mu{}^\nu)=\sum_{k=1}^{30} \tfrac12 a_k \tilde{E}_k,
  \label{fa2}
\end{equation}
with $16\times 16$ matrices $\tilde{E}_k$, $k=1,2,\dots,30$ given in the Appendix~B.
Note that again there are 30 real matrix coeffcients $a_k$, $k=1,2,\dots,30$, in the matrix $(A_B{}^C)$.

The above matrices $(A_A{}^B)$, are closed with respect to the commutator $([A,A']_B{}^C)$ $=(A_B{}^DA'_D{}^C-A'_B{}^DA_D{}^C)$, as they should be, and form the $\mathfrak{n}_0$ of the Tanaka prolongation $\mathfrak{g}_T(\mathfrak{n})$.

Asking again the question of `what is this Lie algebra?', by the same argument of looking for a symmetric tensor $g_{ij}=g_{ji}$ invariant under the adjoint action  of the matrix $(A_i{}^j)$ in $\mathfrak{n}_{\minu 2}$, and finding that $(g_{ij})$ is a multiple of the diagonal matrix of the form $\mathrm{diag}(-1,-1,-1,-1,-1,-1,-1,1)$, one sees that $A$ acts in $\mathfrak{n}_{\minu 2}$ as the Lie algebra $\mathfrak{cso}(1,7)$. Thus, the Lie algebra $\mathfrak{n}_0$ is 
$$\mathfrak{n}_0=\bbR\oplus\mathfrak{cso}(1,7)=\bbR\oplus\bbR\oplus \soa(1,7)=2\bbR\oplus{\raisebox{-1.2\depth}{\begin{turn}{120}
%\begin{minipage}{\linewidth}
\begin{dynkinDiagram}[edge length=.4cm%,labels*={1,...,6}
  ]{D}{**o*}
%\invol{1}{6}%\invol{3}{5}
\end{dynkinDiagram}
%\blindtext
%\end{minipage}
\end{turn}}}
.$$
We then calculated $\mathfrak{n}_1$ and $\mathfrak{n}_2$ obtaining, in particular, that
$\mathfrak{n}_1$ is a Lie algebra of dimension $16=\dim(\mathfrak{n}_{\minu 1})$, that $\dim(\mathfrak{n}_2)=8=\dim(\mathfrak{n}_{\minu 2})$, and that $\mathfrak{n}_k=\{0\}$ for all $k>2$. This, as in the case of Theorem \ref{e3}, shows that the Lie algebra of automorphisms of the distribution $H$ from our Lemma, $\mathfrak{aut}(H)$, being the Tanaka prolongation $\mathfrak{g}_T(\mathfrak{n})=\mathfrak{aut}(H)$, has a \emph{symmetric} gradation
$$\mathfrak{aut}(H)=\mathfrak{n}_{\minu 2}\oplus\mathfrak{n}_{\minu 1}\oplus\mathfrak{n}_0\oplus\mathfrak{n}_1\oplus\mathfrak{n}_2,$$
with its dimension $$78=8+16+30+16+8.$$
This leads to the conclusion that this Lie algebra is \begin{dynkinDiagram}[edge length=.4cm%,labels*={1,...,6}
  ]{E}{to***t}
\invol{1}{6}%\invol{3}{5}
 \end{dynkinDiagram} with the choice of a parabolic as indicated by the crossings\footnote{Note that if you remove the crossed nodes, you will remain with the Dynkin diagram for $\soa(1,7)$, which together with the crossed nodes, counting as $2\bbR$, gives the calculated $\mathfrak{n}_0$.}. This proves that the local group of symmetries of the \emph{distribution} $H$ from Theorem \ref{e3} is $E_{III}$ with the Lie algebra  \begin{dynkinDiagram}[edge length=.4cm%,labels*={1,...,6}
  ]{E}{to***t}
\invol{1}{6}%\invol{3}{5}
 \end{dynkinDiagram}.    
  
Now: `what about the symmetries of the CR structure on $M^{24}_{E_{III}}$?'. The point is that again the (Levi) distribution $H$ on its own, by its mere algebraic structure, defines $J$ in it. Again there exists a \emph{unique} (up to a sign) rank $\binom{1}{1}$ tensor $J$ in $H$ preserved by the action of $\mathfrak{n}_0$ in $H$ and squaring to `$-\id_H$'. In the basis $(X_9,\dots, X_{24})$ in $H$ it is given by
$$J=\bma 0&|& -\mathrm{id}_{8\times 8}\\--&&--\\\mathrm{id}_{8\times 8}&|&0\ema=\tilde{E}_{29}.$$
Thus, again, the structure $(M^{24}_{E_{III}}, H,J)$ has the same Tanaka prolongation as $(M^{24}_{E_{III}}, H)$, and in turn the entire structure $(M^{24}_{E_{III}},H,J)$ is (locally) $E_{III}$ symmetric.

Also, as in the previous case, one uses the $(M^{24}_{E_{III}},H,J)$ obtained from $(M^{24}_{E_{III}},H)$ and embedds it in $\bbC^{16}$. It follows that this embedding is equivalent to the one given in Theorem \ref{e3}.

  This finishes the proof of this theorem.
\end{roof}

\section{Classification of accidental CR graded simple Lie algebras}\label{sec5}
%\subsection{Definitions}
Following the discussion in Subsection \ref{approach} about our approach to search for accidental CR structures, we start out by classifying the corresponding graded simple Lie algebras. 

Recall that a real graded Lie algebra (abbreviated as GLA) is a real Lie algebra $\fg$ with a direct sum decomposition $\fg=\bigoplus_{p\in \bz} \fg_p$ such that each $\fg_p$ is a finite dimensional real vector subspace of $\fg$ and $[\fg_p, \fg_q]\subset \fg_{p+q}$ for all integers $p, q$. 

Note that $\fg_0$ is a subalgebra of $\fg$ and that by restriction of the adjoint representation, there is a natural representation of $\fg_0$ on $\fg_p$ for any integer $p$. 

We will denote by $\fn_-(\fg)$ the nilpotent subalgebra $\oplus_{p<0}\fg_p$ of $\fg$. The maximum $d$ of the set of integers $p$ for which $\fg_{-p}\neq 0$ is called the {\bf depth} of the GLA. A GLA $\fg=\bigoplus_{p\in \bz}\fg_p$ is called 
%\emph{transitive} if $0\neq X\in \fg_p, p\geq 0\Righarrow [X, \fg_{-1}]\neq 0$, and 
\emph{fundamental} if $\fg_{-1}$ generates $\fn_-(\fg)=\bigoplus_{p<0}\fg_p$.

A real {\bf simple} Lie algebra $\fg$ belongs to two disjoint families (see \cite{Yam}):
\begin{enumerate}
\item {\bf Complex type:} $\fg$ is a complex simple Lie algebra $\t \fg$ regarded as real. 
%That is, there exists a complex simple $\t \fg$ such that $\fg = \t \fg^\br$. 
In this case, the complexification $\fg^\bc$ is only semisimple as a complex Lie algebra. 
\item {\bf Real type:} $\fg$ is a real form of a complex Lie algebra. In this case, $\fg^\bc$ is simple as a complex Lie algebra. 
\end{enumerate}

In this paper, we are mainly interested in the second case of real type. 
Note that for the Killing form $\kappa$ of a simple GLA $\fg$, we have $\kappa(\fg_p, \fg_q)\neq 0\iff p+q=0$. 

\begin{definition}\label{our def} Let 
$$
\fg = \fg_{-d} \oplus \cdots \oplus \fg_{-1} \oplus \fg_0\oplus \fg_1 \oplus \cdots \oplus \fg_d
$$
be a simple graded Lie algebra of real type. 
$\fg$ is said to have an \emph{accidental CR structure} if 
\begin{enumerate}
\item $\fg$ is fundamental,
\item $\fg$ is the Tanaka prolongation of $\fn_-$,
\item 
$
\fg_{-1}$  has an almost complex structure $J$ compatible with the $\fg_0$-action, 
that is, there exists an $\br$-linear transformation $J: \fg_{-1} \to \fg_{-1}$ such that 
$$J^2 = -\on{Id}_{\fg_{-1}}, \quad\text{and}\quad \ad_{H}\circ J = J\circ \ad_H,\ \forall H\in \fg_0.
$$
\end{enumerate}
\end{definition}

\begin{remark} The pattern of this definition should be adoptable to define other \emph{accidental structures} as discussed above in Subsection \ref{approach}. 
\end{remark}

The authors of this paper started with this definition and looked to classify such accidental CR graded simple Lie algebras. Through Maple calculations of many examples, they came to the realization that such accidental CR structures exist  when the grading roots for the gradation come in pairs in the Satake diagram. 

After the realization that their list of such candidate simple Lie algebras are exactly the same as those in \cite[Section 4]{medori}, the present authors were drawn to the fundamental works of Medori and Nacinovich \cites{MN1, medori}. 

The first paper \cite{MN1} systematically studied Levi-Tanaka algebras following the method of N. Tanaka. In the second paper \cite{medori}, Medori and Nacinovich classified all semisimple Levi-Tanaka algebras, of both the complex and the real types. 

For the reader's convenience, we recall their basic definitions and compare their results with our more naive version. For the application to CR geometry, \cites{MN1, medori} put the integrability conditions in \er{ni1} and \er{ni2} of $J$ in the forefront from the beginning, as  should be.

\begin{definition}[\cite{medori}*{p.~287}]\label{their def} Let $\fg=\bigoplus_{p\in \bz} \fg_p$ be a finite dimensional real graded Lie algebra. 
$\fg$ is called a \emph{Levi-Tanaka algebra} if 
\begin{enumerate}
\item $\fg$ is fundamental;
\item there is a \emph{partial complex structure} on $\fg$, that is, an $\br$-linear map $J: \fg_{-1}\to \fg_{-1}$ which satisfies
\begin{equation}
\begin{cases}
J^2 = -Id_{\fg_{-1}},\\
[JX, JY] = [X, Y],\quad \forall X, Y\in \fg_{-1}.\label{MNint}
\end{cases}
 \end{equation}
\item the adjoint representation gives an isomorphism between $\fg_0$ and the algebra of $0$-degree derivations of $\fn_-(\fg)$ whose restriction to $\fg_{-1}$ commutes with $J$;
\end{enumerate}
\end{definition}

\begin{definition}[\cite{medori}*{Section 4}]\label{sLT} A \emph{simple Levi-Tanaka Lie algebra of the real type} is a Levi-Tanaka Lie algebra that is also a simple Lie algebra of the real type. 
\end{definition}

If we compare our definition \ref{our def} and their definitions \ref{their def} and \ref{sLT}, we see that we didn't require the partial integrability condition \er{MNint}. Also a Levi-Tanaka algebra \ref{their def} is stronger in condition (3) requiring that $\fg_0$ is equal to the unitary algebra $\fs\fu(\fg_{-1})$ for $J$, while accidental CR structure \ref{our def} only requires that $\fg_0$ is a subalgebra of $\fs\fu(\fg_{-1})$. On the other hand, for accidental CR structure \ref{our def}, condition (2) explicitly requires that the simple Lie algebra is the Tanaka prolongation of $\mathfrak{n}_\minu$.

The gradations on a simple Lie algebra of the real type in the second case can be given in terms of the restricted roots or using the Satake diagram \cite{Yam}, with the latter giving more information.
Recall that the nodes on the Satake diagram are black when the corresponding simple roots are compact (or purely imaginary), or otherwise white. Some pairs of white nodes are joined by curved arrows if they are conjugate to each other, modulo the compact roots. 
The other white ones stand alone. 
% and the restricted roots are just the restriction of such information. 
Using the Satake diagram, a gradation on $\fg$ is given by a subset $\cb_{-1}$ of simple roots represented by white nodes, where one may choose some standing-alone white nodes or pairs of white nodes joined by curved arrows.

\begin{theorem}[\cite{medori}*{Thm~4.1}]\label{their thm} Let $\fg$ be a simple graded Lie algebra of the real type. 
Then $\fg$ admits the structure of a  Levi-Tanaka algebra if and only if its Satake diagram satisfies that the set of grading roots is nonempty and consists of a disjoint union of pairs of white roots joined by a curved arrow. 
\end{theorem}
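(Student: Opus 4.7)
The plan is to complexify and reformulate the existence of $J$ as a splitting problem at the level of root spaces. Let $\sigma$ denote the conjugation of $\fg^{\mathbb{C}}$ fixing the real form $\fg$. A partial complex structure $J$ on $\fg_{-1}$ is equivalent to a decomposition $\fg_{-1}^{\mathbb{C}} = V^+ \oplus V^-$ with $V^- = \sigma(V^+)$, on which $J$ acts as $\pm i$. The commutation of $J$ with $\ad\,\fg_0$ forces each $V^{\pm}$ to be stable under $\fg_0^{\mathbb{C}}$; since $\fg_0^{\mathbb{C}}$ contains the complex Cartan, both $V^{\pm}$ must be direct sums of complex root spaces of $\fg^{\mathbb{C}}$. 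Expanding $X,Y$ into their $V^{\pm}$ components shows that the Levi--Tanaka identity $[JX,JY]=[X,Y]$ is equivalent to the abelianness $[V^+,V^+]=0$ inside $\fg^{\mathbb{C}}$.

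For sufficiency, suppose the grading roots form a disjoint union of pairs $\{\alpha_r,\alpha'_r\}$ joined by curved arrows. I would pick one representative from each pair to form a set $\Pi^+$, with $\Pi^-$ the complementary representatives. Every positive root $\beta$ of grade $1$ has, in its simple-root expansion, coefficient $1$ at exactly one grading simple root and $0$ at all the other grading simple roots; that distinguishing root lies either in $\Pi^+$ or in $\Pi^-$. Accordingly, define $V^+$ as the span of the root spaces $\fg_{-\beta}$ whose distinguishing simple root lies in $\Pi^+$, and $V^-$ analogously with $\Pi^-$. Because the Satake involution sends each $\alpha_r \leftrightarrow \alpha'_r$ modulo compact roots while fixing compact roots (which carry grade $0$), one checks $\sigma(V^+)=V^-$, together with $V^+ \cap V^- = 0$ and $V^+ \oplus V^- = \fg_{-1}^{\mathbb{C}}$. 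Invariance under $\fg_0^{\mathbb{C}}$ is automatic, since roots of grade $0$ carry zero coefficient at every grading simple root, so their adjoint action preserves the $\Pi^{\pm}$-condition. The integrability $[V^+,V^+]=0$ reduces to verifying that the sum of two positive roots, each with coefficient $1$ at some (possibly equal) element of $\Pi^+$ and $0$ at every element of $\Pi^-$, is never itself a root of $\fg^{\mathbb{C}}$.

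For necessity, assume $J$ exists with decomposition $V^+ \oplus V^- = \fg_{-1}^{\mathbb{C}}$ as above. Each $V^{\pm}$ is a sum of complex root spaces, and $\sigma$ must exchange $V^+$ and $V^-$ disjointly. If some grading simple root $\alpha$ were a lone white node in the Satake diagram, then $\sigma$ would act on the orbit of $\fg_{-\alpha}$ in a way that cannot be cleanly partitioned: in the simplest case $\sigma(\fg_{-\alpha})=\fg_{-\alpha}$, so the one-dimensional root space is $\sigma$-stable and cannot lie in either $V^+$ or $V^-$ consistently with $V^+ \cap \sigma(V^+) = 0$; a more refined argument rules out the general case where $\sigma$ mixes $\fg_{-\alpha}$ with compact-root-shifted partners staying at the same restricted-root level. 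Black nodes cannot be grading roots by definition, since compact simple roots have zero grade. Hence every grading root must belong to an arrow-paired pair of white nodes. The principal obstacle in this program is the integrability verification in the sufficiency direction: the required root-sum combinatorics do not follow from a single uniform statement but must be confirmed across the Satake diagrams of the simple real Lie algebras admitting such pairings, organized according to the constraints on the multiplicities of simple roots appearing in a positive root of $\fg$; this case-by-case verification is the substance of the classification in \cite{medori}.
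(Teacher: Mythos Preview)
The paper does not prove this theorem; it is quoted verbatim from \cite{medori}*{Thm~4.1} and used as input for Theorem~\ref{accr}. Your overall strategy---complexify, translate $J$ into a $\sigma$-conjugate splitting $\fg_{-1}^{\bc}=V^+\oplus V^-$ into sums of root spaces, reduce integrability to $[V^+,V^+]=0$, and analyze how the Satake involution permutes the simple grading roots---is the standard one and matches what Medori and Nacinovich do.

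There is, however, a genuine gap in your sufficiency direction. You construct $J$ and verify that it commutes with $\ad\,\fg_0$ (equivalently, $\fg_0\subset\operatorname{Der}_0(\fn_-)^J$), but you never verify condition~(3) of Definition~\ref{their def}, which demands the \emph{equality} $\fg_0=\operatorname{Der}_0(\fn_-)^J$. The reverse inclusion is not automatic. When $\fg$ happens to be the full Tanaka prolongation of $\fn_-$ one has $\operatorname{Der}_0(\fn_-)=\fg_0$ already and there is nothing to check; but in the hypersurface cases $\fs\fu(p,q)$ graded by $\{\alpha_1,\alpha_\ell\}$ (which do satisfy your Satake criterion and are on the Levi--Tanaka list), $\operatorname{Der}_0(\fn_-)$ is the full conformal symplectic algebra, strictly larger than $\fg_0$, and one must show that the $J$-commutation constraint cuts it down \emph{exactly} to $\fg_0$. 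Your argument as written would only establish the existence of a partial complex structure compatible with $\fg_0$, not the full Levi--Tanaka condition. In \cite{medori} this is handled through their Theorem~2.4 (that $J=\ad\,\tilde J$ for a canonical $\tilde J$ in the center of $\fg_0$), which then feeds into the maximality argument; you would need an analogous step.
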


Then \cite{medori}*{Section 4} lists all the possible simple Levi-Tanaka algebras of the real type as follows.  
\begin{enumerate}
\item $\fs\fu(p, q), 1\leq p<q, p+q=\ell + 1$ graded by 
$$\cb_{-1} = \{\a_{i_1},\dots,\a_{i_\nu},\a_{\ell-i_v+1},\dots,\a_{\ell-i_1+1}\}$$
where $1\leq i_1<\dots<i_v\leq p,$ with depth $2\nu$, 
\item $\fs\fu(p, p), p\geq 2, 2p=\ell+1$ graded by 
$$\cb_{-1}=\{\a_{i_1},\dots,\a_{i_\nu},\a_{\ell-i_v+1},\dots,\a_{\ell-i_1+1}\}$$ 
where $1\leq i_1<\dots<i_v\leq p-1,$ 
with depth $2\nu$,
\item $\fs\fo(\ell-1, \ell+1),\ell\geq 4$ graded by $\cb_{-1}=\{\a_{\ell-1}, \a_\ell\}$, with depth 2,
\item $\fs\fo^*(2\ell),\ell=2m+1,m\geq 2$ graded by $\cb_{-1}=\{\a_{\ell-1},\a_{\ell}\}$, with depth 2,
\item $E_{II}$ graded by $\cb_{-1} = \{\a_1,\a_6\}, \{\a_3,\a_5\}\text{ or }\{\a_1,\a_3,\a_5,\a_6\}$, with depths 2, 4, 6,
\item $E_{III}$ graded by $\cb_{-1} = \{\a_1,\a_6\}$ with depth 2. 
\end{enumerate}

Adapting their proof to our situation, we can also get the following result. 
\begin{theorem}\label{accr}
A simple Lie algebra of real type admits an accidental CR structure iff it admits a Levi-Tanaka structure other than the case of 
\begin{equation}\label{hyp type}
\begin{split}
&\fs\fu(p, q), 1\leq p<q, p+q=\ell + 1 \text{ graded by }\cb_{-1} = \{\a_1,\a_{\ell}\},\\
&\fs\fu(p, p), 2\leq p, 2p=\ell + 1 \text{ graded by }\cb_{-1} = \{\a_1,\a_{\ell}\}.
\end{split}
\end{equation}
Therefore the $J$ in the accidental CR structure can be made integrable, and up to a sign, such integrable $J$ is unique. 
\end{theorem}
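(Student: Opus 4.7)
The plan is to deduce Theorem \ref{accr} from Medori--Nacinovich's classification (Theorem \ref{their thm}) by isolating the three ways in which Definition \ref{our def} of an accidental CR structure differs from Definition \ref{their def} of a Levi--Tanaka algebra: (a) we require $\fg$ to equal the Tanaka prolongation of $\fn_-(\fg)$; (b) we only require $\fg_0 \subset \fs\fu(\fg_{-1},J)$ rather than equality; (c) we drop the partial integrability condition \er{MNint}. The strategy is to show that (a) is the selective condition---it excludes exactly the hypersurface cases \er{hyp type}---while (b) and \er{MNint} are forced automatically once the pairing of grading roots under the Satake involution has been established.

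For the ``only if'' direction, suppose $\fg$ carries an accidental CR structure. The existence of a $\fg_0$-equivariant $\br$-linear $J$ on $\fg_{-1}$ with $J^2 = -\on{Id}$ forces, by the restricted-root/Satake-diagram argument used in the proof of \cite{medori}*{Thm~4.1}, the set of grading roots to decompose into a disjoint union of pairs of white nodes joined by curved arrows; the argument transports verbatim because it only invokes $\fg_0$-equivariance of $J$, not the sharper Levi--Tanaka axioms. This places the gradation in one of cases (1)--(6) listed after Theorem \ref{their thm}. To exclude the hypersurface gradings \er{hyp type}, observe that in those cases $\fn_-$ is the Levi-nondegenerate contact Heisenberg algebra of the hyperquadric, whose Tanaka prolongation---computed without $J$---is infinite-dimensional, because contact distributions admit infinite-dimensional pseudogroups of symmetries (as emphasized in the Introduction and in Example \ref{exso}). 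Hence condition (a) of Definition \ref{our def} fails for these two gradings.

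For the ``if'' direction, pick any of the remaining gradations. The critical step is to verify (a), i.e.\ that the Tanaka prolongation $\fg_T(\fn_-)$ of the naked symbol coincides with $\fg$. This is a case-by-case check: for the exceptional $E_{II}$ and $E_{III}$ gradations it is carried out explicitly in Section~4, and for the classical families (1)--(4) of Medori--Nacinovich it proceeds by analogous direct computation of the positive prolongation steps or, more efficiently, by Yamaguchi's rigidity classification \cite{Yam}. This case analysis is the main obstacle of the proof. Once (a) is in hand, every strata-preserving derivation of $\fn_-$ automatically commutes with the canonical $J$ introduced below, because $J$ is determined purely by the paired-root structure which every such derivation must preserve; this upgrades (b) to the equality required by the Levi--Tanaka axioms and so recovers a genuine Levi--Tanaka structure on $\fg$.

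Uniqueness and integrability of $J$ are handled as follows: the paired-root structure canonically splits $\fg_{-1}^\bc = W \oplus \overline{W}$, where $W$ is the sum of complex root spaces attached to one chosen node of each curved-arrow pair; declaring $J = +i \cdot \on{Id}_W$ and $J = -i \cdot \on{Id}_{\overline{W}}$ defines a real endomorphism of $\fg_{-1}$ with $J^2 = -\on{Id}$. Integrability \er{MNint} is equivalent to $[W,W] = 0$ in $\fg_{-2}^\bc$, and this holds in every surviving gradation by the bracket relations among the relevant root spaces---the same computation that underlies Medori--Nacinovich's proof that the corresponding Levi--Tanaka algebras exist. Finally, any other $\fg_0$-invariant almost complex structure on $\fg_{-1}$ must preserve the $W \oplus \overline{W}$ decomposition (and be a real map), so it differs from $J$ only by a sign on each $\fg_0$-irreducible summand; imposing \er{MNint} selects exactly the two signs for which $W$ remains abelian, yielding the unique-up-to-sign integrable $J$ claimed.
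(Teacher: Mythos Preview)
Your overall architecture—isolating the three discrepancies between Definitions \ref{our def} and \ref{their def}, invoking Yamaguchi's criterion for the Tanaka-prolongation condition, and treating uniqueness and integrability via the root-space splitting $\fg_{-1}^\bc = W \oplus \overline{W}$—matches the paper's strategy in outline. Both proofs ultimately rest on comparing Yamaguchi's list \cite{Yam}*{Theorem 5.3} with Medori--Nacinovich's list, and both exclude the hypersurface gradings \er{hyp type} for exactly the reason you give.

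The substantive divergence is in the ``only if'' direction. The paper's pivot, which you do not invoke, is \cite{medori}*{Theorem 2.4}: the almost complex structure $J$ is \emph{inner}, i.e.\ $J = \ad_{\tilde J}\big|_{\fg_{-1}}$ for a unique $\tilde J \in \fg_0$. The paper obtains this by observing that $J$ extends to a degree-zero derivation of $\fn_-$, and condition (2) of Definition \ref{our def} then forces that derivation into $\fg_0$. Once $J$ is inner, the entire Medori--Nacinovich machinery (including their Satake-diagram argument for Theorem \ref{their thm}) applies without change, because their proof is built on analysing $\tilde J$ as an element of a Cartan subalgebra.

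Your assertion that the root-pairing argument ``transports verbatim because it only invokes $\fg_0$-equivariance of $J$'' is where the gap lies. Medori--Nacinovich's proof of Theorem \ref{their thm} does \emph{not} run on bare $\fg_0$-equivariance; it uses that $\tilde J$ lies in the Cartan and reads off its root-space eigenvalues. One \emph{can} rebuild a version of the argument from equivariance alone—$J$ commutes with the Cartan, hence acts by $\pm i$ on each one-dimensional complex root space in $\fg_{-1}^\bc$, and reality of $J$ forces these roots to pair under the conjugation—but this is extra work, not a verbatim transport, and you would still owe the passage from ``no real degree $-1$ roots'' to ``the grading simple roots form Satake-arrow pairs''. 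The paper's route through Theorem 2.4 sidesteps all of this: it shows directly that an accidental CR structure \emph{is} a Levi--Tanaka structure, so Theorem \ref{their thm} applies as stated rather than needing to be reproved.
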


\begin{proof} The criterion for a simple graded Lie algebra $\fg$ to be the prolongation of $\fn_-$ is given in \cite{Yam}*{Theorem 5.3}. Then all the simple Levi-Tanaka Lie algebras have accidental structures except those listed in \er{hyp type}, that is, the hypersurface type CR structure discussed in the introduction. 

Now in the other direction, we can show that an accidental CR structure always implies a Levi-Civita structure. 

The main technical advantage of the proof in \cite{medori} over our original more computational approach is their theorem 2.4. It states that the partial complex structure $J$ is induced by a unique element $\tilde J\in \fg_0$. This part does not require the integrability condition. All it requires is that the $J: \fg_{-1}\to \fg_{-1}$ defines a degree 0 derivation of $\fn_-$. In the Levi-Tanaka cases, this derivation is $J$ itself so it belongs to $\fg_0$. In our accidental CR structure case, condition (2) would imply that such a derivation belongs to the degree 0 part of the Tanaka prolongation and so in our $\fg_0$. 

Now the integrability condition is some extra condition we can require on our accidental CR structure, and up to a sign, it is unique as Medori and Nacinovich showed. This also holds for our accidental CR structures if we require $J$ to be integrable. 
\end{proof}

\begin{remark} Medori and Nacinovich \cite{medori} also classifies simple Levi-Tanaka Lie algebra of complex type. There, the gradation root sets are more flexible, and the integrability condition comes in much more prominently and is the main factor to decide the result. 
\end{remark}

We summarize the accidental CR structures with depth 2 that we studied in this paper in the following theorem. We also find among them the nonrigid ones. 

\begin{theorem}
\label{summm} 
\begin{enumerate}
\item The list of simple Lie algebras of real type with an accidental CR structure of depth 2 is as follows. 
\begin{enumerate}
\item $E_{II}$ graded by $\cb_{-1} = \{\a_1,\a_6\}$ corresponding to Theorem \ref{e2},
\item $E_{III}$ graded by $\cb_{-1} = \{\a_1,\a_6\}$ corresponding to Theorem \ref{e3},
\item $\fs\fo(\ell-1, \ell+1),\ell\geq 4$ graded by $\cb_{-1}=\{\a_{\ell-1}, \a_\ell\}$ corresponding to Theorem \ref{so},
\item $\fs\fo^*(2\ell),\ell=2m+1,m\geq 2$ graded by $\cb_{-1}=\{\a_{\ell-1},\a_{\ell}\}$ corresponding to Theorem \ref{so*},
\item $\fs\fu(p, q), 1\leq p\leq q, p+q=\ell + 1$ graded by 
$\cb_{-1} = \{\a_{s},\a_{\ell-s+1}\},$
where $2\leq s\leq p$ if $p<q$, and $2\leq s<p$ if $p=q$, corresponding to Theorem \ref{su}.  
\end{enumerate}
\item Among such accidental CR structures with depth 2, the only nonrigid ones are 
$\fs\fu(p, q), p+q=\ell + 1$ graded by 
$\cb_{-1} = \{\a_{2},\a_{\ell-1}\},$
where $2\leq p<q$ or $3\leq p=q$. 
\end{enumerate}
\end{theorem}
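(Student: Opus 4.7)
The plan is to split the argument into its two claims and treat them independently. Part (1) is essentially an extraction from the classification of simple Levi--Tanaka algebras of real type recalled just before Theorem \ref{accr}, restricted to those of depth two and with hypersurface type excluded (per Theorem \ref{accr}). Part (2) requires applying Yamaguchi's classification of nonrigid parabolic geometries \cite{Yam} to the resulting short list.

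For part (1), I would go through each family in the Medori--Nacinovich list separately. The depth equals $2\nu$, where $\nu$ is the number of pairs of crossed roots in $\cb_{-1}$; depth two therefore forces $\nu=1$. For $\mathfrak{su}(p,q)$ this yields $\cb_{-1}=\{\alpha_s,\alpha_{\ell-s+1}\}$, with $1\le s\le p$ when $p<q$ and $1\le s\le p-1$ when $p=q$; removing the hypersurface case $s=1$ (which is the one excluded by Theorem \ref{accr}) leaves exactly the range stated in case (e). For $\mathfrak{so}(\ell-1,\ell+1)$ and $\mathfrak{so}^*(2\ell)$ with $\ell=2m+1$, the Medori--Nacinovich lists contain a single admissible gradation, and each is automatically of depth two, yielding cases (c) and (d). For $E_{II}$, the three admissible gradations have depths $2$, $4$, $6$; only $\{\alpha_1,\alpha_6\}$ has depth two, giving case (a). For $E_{III}$, the unique admissible gradation has depth two, giving case (b). This exhausts the classification.

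For part (2), nonrigidity is controlled by the positive-homogeneity components of $H^2(\mathfrak{n}_-,\mathfrak{g})$, which Kostant's theorem expresses combinatorially in terms of length-two elements of the Hasse diagram attached to the parabolic $\mathfrak{p}$. Yamaguchi's classification \cite{Yam} lists exactly the parabolic geometries with nonzero positive-homogeneity harmonic curvature, and the plan is to apply this classification to each of the five families from part (1). For cases (a) through (d), I expect all harmonic curvature components to sit in non-positive homogeneity, giving rigidity; the crossings either lie at the tips of the Dynkin diagram (the spinor nodes for $D_\ell$, the extremal nodes for $E_6$) or are paired with the $E_6$ Satake symmetry, and in each case the relevant length-two Weyl elements yield no positive-homogeneity weight. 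For the $\mathfrak{su}$ case (e) with crossings $\{\alpha_s,\alpha_{\ell-s+1}\}$, a direct analysis of the length-two Hasse elements on $A_\ell$ shows that a positive-homogeneity component arises precisely when the crossed node is adjacent to the first uncrossed interior node on both sides, i.e.\ exactly when $s=2$. The side conditions $2\le p<q$ or $3\le p=q$ then come from requiring the two crossed roots to be genuinely distinct and both permissible under the $s$-ranges in case (e).

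The main obstacle will be the case-by-case cohomological verification in part (2): one must unambiguously identify the positive-homogeneity components in $H^2(\mathfrak{n}_-,\mathfrak{g})$ for each of the five structures. For the exceptional and spinorial cases (a)--(d) the computation is shortened by the low ranks and by the symmetry forced by the Satake involutions, but is nevertheless delicate and benefits from passing to the complexification, since the real forms share the same harmonic curvature components as $\mathfrak{g}_0$-modules. For case (e) the combinatorial description of the Hasse diagram for $A_\ell$ is transparent, and the isolation of $s=2$ as the unique nonrigid value can be carried out directly from Kostant's formula or by quoting the appropriate entry in Yamaguchi's tables.
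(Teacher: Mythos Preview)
Your approach is correct and matches the paper's: Part (1) is exactly the extraction of depth-two cases from the Medori--Nacinovich list combined with Theorem \ref{accr}, and Part (2) is the comparison with Yamaguchi's classification. The only difference is that the paper dispatches Part (2) in one line by directly intersecting the list from Part (1) with Yamaguchi's Proposition 5.5 (which already tabulates all nonrigid simple graded Lie algebras), rather than recomputing the Kostant cohomology case by case as you outline; your proposed Hasse-diagram verification is unnecessary extra work, though not wrong.
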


\begin{proof} Part (1) follows from Theorem \ref{their thm}, the list after it, and Theorem \ref{accr}. 

Recall that a parabolic geometry modeled on a graded Lie algebra $\fg$ with negative part $\fn_-$ is nonrigid if the Lie algebra cohomology $H^2(\fn_-, \fg)$ has nonzero components with nonnegative weights. K. Yamaguchi in \cite{Yam}*{Prop. 5.5} listed all such simple graded Lie algebras. Comparing that list with Part (1), we see that the only common cases are Yamaguchi's case (I.8) for $\ell=4$ and (I.10) for $\ell\geq 5$, that is $A_{\ell}$ graded by $\{\alpha_2, \alpha_{\ell-1}\}$ for $\ell\geq 4$. This is our Part (2). 
\end{proof}

\section{Appendix~A}
\subsection{Basis for $\bbR\oplus\mathfrak{cso}(3,5)$ in 16-dim representation}
First we introduce $16\times 16$ real matrices $F^\mu{}_\nu=(F^\mu{}_{\nu\alpha}{}^\beta)$, $\mu,\nu=1,2,\dots, 16$, $\alpha,\beta=1,2,\dots,16$, with matrix elements $F^\mu{}_{\nu\alpha}{}^\beta=\delta^\mu_\alpha\delta^\beta_\nu$. Then, the matrices spanning $\bbR\oplus\mathfrak{cso}(3,5)$ and standing in formula \eqref{fa1} are:
%../E6_signature71_all_checking_embedding.nb
\[
\aligned
E_1=&F^5{}_3+F^6{}_4+F^{13}{}_{11}+F^{14}{}_{12},\\
E_2=&-F^5{}_{10}+F^7{}_{12}+F^{13}{}_{2}-F^{15}{}_{4},\\
E_3=&F^5{}_9+F^6{}_{10}+F^{7}{}_{11}+F^{8}{}_{12}-F^{13}{}_1-F^{14}{}_2-F^{15}{}_{3}-F^{16}{}_4,\\
E_4=&-F^5{}_1+F^6{}_{2}+F^{7}{}_{3}-F^{8}{}_{4}-F^{13}{}_9+F^{14}{}_{10}+F^{15}{}_{11}-F^{16}{}_{12},
\\
E_5=&-F^6{}_9+F^8{}_{11}+F^{14}{}_{1}-F^{16}{}_{3},
\endaligned
\]
\[
\aligned
E_6=&F^7{}_1+F^8{}_2+F^{15}{}_{9}+F^{16}{}_{10},\\
E_7=&F^3{}_5+F^4{}_6+F^{11}{}_{13}+F^{12}{}_{14},\\
E_8=&F^2{}_{13}-F^4{}_{15}-F^{10}{}_{5}+F^{12}{}_{7},\\
E_9=&-F^1{}_{13}-F^2{}_{14}-F^{3}{}_{15}-F^{4}{}_{16}+F^{9}{}_5+F^{10}{}_{6}+F^{11}{}_{7}+F^{12}{}_{8},\\
E_{10}=&-F^1{}_5+F^2{}_{6}+F^{3}{}_{7}-F^{4}{}_{8}-F^{9}{}_{13}+F^{10}{}_{14}+F^{11}{}_{15}-F^{12}{}_{16},
\endaligned
\]
\[
\aligned
E_{11}=&F^1{}_{14}-F^3{}_{16}-F^{9}{}_{6}+F^{11}{}_{8},\\
E_{12}=&F^1{}_7+F^2{}_8+F^{9}{}_{15}+F^{10}{}_{16},\\
E_{13}=&F^3{}_{10}+F^7{}_{14}-F^{11}{}_{2}-F^{15}{}_{6},\\
E_{14}=&-F^3{}_9-F^4{}_{10}+F^{7}{}_{13}+F^{8}{}_{14}+F^{11}{}_{1}+F^{12}{}_{2}-F^{15}{}_{5}-F^{16}{}_{6},\\
E_{15}=&F^3{}_1-F^4{}_{2}+F^{7}{}_{5}-F^{8}{}_{6}+F^{11}{}_{9}-F^{12}{}_{10}+F^{15}{}_{13}-F^{16}{}_{14},
\endaligned
\]
\[
\aligned
E_{16}=&F^4{}_{9}+F^8{}_{13}-F^{12}{}_{1}-F^{16}{}_{5},\\
E_{17}=&-F^2{}_{11}-F^6{}_{15}+F^{10}{}_{3}+F^{14}{}_{7},\\
E_{18}=&F^1{}_{11}+F^2{}_{12}-F^{5}{}_{15}-F^{6}{}_{16}-F^{9}{}_{3}-F^{10}{}_{4}+F^{13}{}_{7}+F^{14}{}_{8},\\
E_{19}=&F^1{}_3-F^2{}_{4}+F^{5}{}_{7}-F^{6}{}_{8}+F^{9}{}_{11}-F^{10}{}_{12}+F^{13}{}_{15}-F^{14}{}_{16},\\
E_{20}=&-F^1{}_{12}-F^5{}_{16}+F^{9}{}_{4}+F^{13}{}_{8},
\endaligned
\]
\[
\aligned
E_{21}=&F^2{}_{1}-F^4{}_{3}-F^{6}{}_{5}+F^{8}{}_{7}+F^{10}{}_{9}-F^{12}{}_{11}-F^{14}{}_{13}+F^{16}{}_{15},\\
E_{22}=&F^2{}_9+F^4{}_{11}+F^{6}{}_{13}+F^{8}{}_{15}-F^{10}{}_{1}-F^{12}{}_{3}-F^{14}{}_{5}-F^{16}{}_{7},\\
E_{23}=&F^1{}_{2}-F^3{}_{4}-F^{5}{}_{6}+F^{7}{}_{8}+F^{9}{}_{10}-F^{11}{}_{12}-F^{13}{}_{14}+F^{15}{}_{16},\\
E_{24}=&-F^1{}_{10}-F^3{}_{12}-F^{5}{}_{14}-F^{7}{}_{16}+F^{9}{}_{2}+F^{11}{}_{4}+F^{13}{}_{6}+F^{15}{}_{8},\\
E_{25}=&-F^1{}_{9}-F^4{}_{12}-F^{6}{}_{14}-F^{7}{}_{15}+F^{9}{}_{1}+F^{12}{}_{4}+F^{14}{}_{6}+F^{15}{}_{7},
\endaligned
\]
\[
\aligned
E_{26}=&F^2{}_{10}+F^3{}_{11}+F^{5}{}_{13}+F^{8}{}_{16}-F^{10}{}_{2}-F^{11}{}_{3}-F^{13}{}_{5}-F^{16}{}_{8},\\
E_{27}=&F^3{}_{3}+F^4{}_{4}-F^{5}{}_{5}-F^{6}{}_{6}+F^{11}{}_{11}+F^{12}{}_{12}-F^{13}{}_{13}-F^{14}{}_{14},\\
E_{28}=&F^2{}_2+F^4{}_{4}-F^{5}{}_{5}-F^{7}{}_{7}+F^{10}{}_{10}+F^{12}{}_{12}-F^{13}{}_{13}-F^{15}{}_{15},\\
E_{29}=&F^1{}_{1}-F^4{}_{4}+F^{5}{}_{5}-F^{8}{}_{8}+F^{9}{}_{9}-F^{12}{}_{12}+F^{13}{}_{13}-F^{16}{}_{16},\\
E_{30}=&F^1{}_1+F^2{}_{2}+F^{3}{}_{3}+F^{4}{}_{4}+F^{9}{}_{9}+F^{10}{}_{10}+F^{11}{}_{11}+F^{12}{}_{12}.
\endaligned
\]

\section{Appendix~B}
\subsection{Basis for $\bbR\oplus\mathfrak{cso}(1,7)$ in 16-dim representation}
%../E6_signature71_all_checking_embedding.nb
The matrices spanning $\bbR\oplus\mathfrak{cso}(1,7)$ and standing in formula \eqref{fa2} are:
\[
\aligned
\tilde{E}_1=&-(F^1{}_{10}+F^{10}{}_1)+(F^2{}_{9}+F^{9}{}_2)-(F^3{}_{13}+F^{13}{}_3)-(F^4{}_{16}+F^{16}{}_4)+\\&(F^5{}_{11}+F^{11}{}_5)-(F^6{}_{15}+F^{15}{}_6)+(F^7{}_{14}+F^{14}{}_7)+(F^8{}_{12}+F^{12}{}_8),\\
\tilde{E}_2=&-(F^1{}_{11}+F^{11}{}_1)+(F^2{}_{13}+F^{13}{}_2)+(F^3{}_{9}+F^{9}{}_3)-(F^4{}_{14}+F^{14}{}_4)-\\&(F^5{}_{10}+F^{10}{}_5)+(F^6{}_{12}+F^{12}{}_6)-(F^7{}_{16}+F^{16}{}_7)+(F^8{}_{15}+F^{15}{}_8),\\
\tilde{E}_3=&(F^1{}_{4}+F^{4}{}_1)-(F^2{}_{8}+F^{8}{}_2)-(F^3{}_{6}+F^{6}{}_3)+(F^5{}_{7}+F^{7}{}_5)+\\&(F^9{}_{12}+F^{12}{}_9)-(F^{10}{}_{16}+F^{16}{}_{10})-(F^{11}{}_{14}+F^{14}{}_{11})+(F^{13}{}_{15}+F^{15}{}_{13}),
\endaligned
\]
\[
\aligned
\tilde{E}_4=&-(F^1{}_{13}+F^{13}{}_1)-(F^2{}_{11}+F^{11}{}_2)+(F^3{}_{10}+F^{10}{}_3)+(F^4{}_{15}+F^{15}{}_4)+\\&(F^5{}_{9}+F^{9}{}_5)-(F^6{}_{16}+F^{16}{}_6)-(F^7{}_{12}+F^{12}{}_7)+(F^8{}_{14}+F^{14}{}_8),\\
\tilde{E}_5=&(F^1{}_{6}+F^{6}{}_1)-(F^2{}_{7}+F^{7}{}_2)+(F^3{}_{4}+F^{4}{}_3)-(F^5{}_{8}+F^{8}{}_5)+\\&(F^9{}_{14}+F^{14}{}_9)-(F^{10}{}_{15}+F^{15}{}_{10})+(F^{11}{}_{12}+F^{12}{}_{11})-(F^{13}{}_{16}+F^{16}{}_{13}),
\\
\tilde{E}_6=&(F^1{}_{7}+F^{7}{}_1)+(F^2{}_{6}+F^{6}{}_2)-(F^3{}_{8}+F^{8}{}_3)-(F^4{}_{5}+F^{5}{}_4)+\\&(F^9{}_{15}+F^{15}{}_9)+(F^{10}{}_{14}+F^{14}{}_{10})-(F^{11}{}_{16}+F^{16}{}_{11})-(F^{12}{}_{13}+F^{13}{}_{12}),
\\
\endaligned
\]
\[
\aligned
\tilde{E}_7=&(F^1{}_{8}+F^{8}{}_1)+(F^2{}_{4}+F^{4}{}_2)+(F^3{}_{7}+F^{7}{}_3)+(F^5{}_{6}+F^{6}{}_5)+\\&(F^9{}_{16}+F^{16}{}_9)+(F^{10}{}_{12}+F^{12}{}_{10})+(F^{11}{}_{15}+F^{15}{}_{11})+(F^{13}{}_{14}+F^{14}{}_{13}),
\\
\tilde{E}_8=&-(F^1{}_{5}-F^{5}{}_1)-(F^2{}_{3}-F^{3}{}_2)-(F^4{}_{7}-F^{7}{}_4)+(F^6{}_{8}-F^{8}{}_6)-\\&(F^9{}_{13}-F^{13}{}_9)-(F^{10}{}_{11}-F^{11}{}_{10})-(F^{12}{}_{15}-F^{15}{}_{12})+(F^{14}{}_{16}-F^{16}{}_{14}),
\\
\tilde{E}_9=&(F^1{}_{16}-F^{16}{}_1)+(F^2{}_{12}-F^{12}{}_2)-(F^3{}_{15}-F^{15}{}_3)+(F^4{}_{10}-F^{10}{}_4)-\\&(F^5{}_{14}-F^{14}{}_5)-(F^6{}_{13}-F^{13}{}_6)-(F^7{}_{11}-F^{11}{}_7)+(F^8{}_{9}-F^{9}{}_8),
\endaligned
\]
\[
\aligned
\tilde{E}_{10}=&(F^1{}_{3}-F^{3}{}_1)-(F^2{}_{5}-F^{5}{}_2)-(F^4{}_{6}-F^{6}{}_4)-(F^7{}_{8}-F^{8}{}_7)+\\&(F^9{}_{11}-F^{11}{}_9)-(F^{10}{}_{13}-F^{13}{}_{10})-(F^{12}{}_{14}-F^{14}{}_{12})-(F^{15}{}_{16}-F^{16}{}_{15}),
\\
\tilde{E}_{11}=&(F^1{}_{15}-F^{15}{}_1)+(F^2{}_{14}-F^{14}{}_2)+(F^3{}_{16}-F^{16}{}_3)+(F^4{}_{13}-F^{13}{}_4)+\\&(F^5{}_{12}-F^{12}{}_5)+(F^6{}_{10}-F^{10}{}_6)+(F^7{}_{9}-F^{9}{}_7)+(F^8{}_{11}-F^{11}{}_8),
\\
\tilde{E}_{12}=&-(F^1{}_{14}-F^{14}{}_1)+(F^2{}_{15}-F^{15}{}_2)+(F^3{}_{12}-F^{12}{}_3)+(F^4{}_{11}-F^{11}{}_4)-\\&(F^5{}_{16}-F^{16}{}_5)-(F^6{}_{9}-F^{9}{}_6)+(F^7{}_{10}-F^{10}{}_7)-(F^8{}_{13}-F^{13}{}_8),
\endaligned
\]
\[
\aligned
\tilde{E}_{13}=&-(F^1{}_{12}-F^{12}{}_1)+(F^2{}_{16}-F^{16}{}_2)-(F^3{}_{14}-F^{14}{}_3)-(F^4{}_{9}-F^{9}{}_4)+\\&(F^5{}_{15}-F^{15}{}_5)-(F^6{}_{11}-F^{11}{}_6)+(F^7{}_{13}-F^{13}{}_7)+(F^8{}_{10}-F^{10}{}_8),
\\
\tilde{E}_{14}=&(F^1{}_{14}-F^{14}{}_1)+(F^2{}_{15}-F^{15}{}_2)+(F^3{}_{12}-F^{12}{}_3)+(F^4{}_{11}-F^{11}{}_4)+\\&(F^5{}_{16}-F^{16}{}_5)+(F^6{}_{9}-F^{9}{}_6)+(F^7{}_{10}-F^{10}{}_7)+(F^8{}_{13}-F^{13}{}_8),
\\
\tilde{E}_{15}=&-(F^1{}_{2}-F^{2}{}_1)-(F^3{}_{5}-F^{5}{}_3)+(F^4{}_{8}-F^{8}{}_4)+(F^6{}_{7}-F^{7}{}_6)-\\&(F^9{}_{10}-F^{10}{}_9)-(F^{11}{}_{13}-F^{13}{}_{11})+(F^{12}{}_{16}-F^{16}{}_{12})+(F^{14}{}_{15}-F^{15}{}_{14}),
\endaligned
\]
\[
\aligned
\tilde{E}_{16}=&-(F^1{}_{12}-F^{12}{}_1)-(F^2{}_{16}-F^{16}{}_2)+(F^3{}_{14}-F^{14}{}_3)-(F^4{}_{9}-F^{9}{}_4)+\\&(F^5{}_{15}-F^{15}{}_5)+(F^6{}_{11}-F^{11}{}_6)+(F^7{}_{13}-F^{13}{}_7)-(F^8{}_{10}-F^{10}{}_8),
\\
\tilde{E}_{17}=&(F^1{}_{16}-F^{16}{}_1)-(F^2{}_{12}-F^{12}{}_2)+(F^3{}_{15}-F^{15}{}_3)-(F^4{}_{10}-F^{10}{}_4)-\\&(F^5{}_{14}-F^{14}{}_5)-(F^6{}_{13}-F^{13}{}_6)+(F^7{}_{11}-F^{11}{}_7)+(F^8{}_{9}-F^{9}{}_8),
\\
\tilde{E}_{18}=&-(F^1{}_{15}-F^{15}{}_1)+(F^2{}_{14}-F^{14}{}_2)+(F^3{}_{16}-F^{16}{}_3)-(F^4{}_{13}-F^{13}{}_4)-\\&(F^5{}_{12}-F^{12}{}_5)+(F^6{}_{10}-F^{10}{}_6)-(F^7{}_{9}-F^{9}{}_7)+(F^8{}_{11}-F^{11}{}_8),
\endaligned
\]
\[
\aligned
\tilde{E}_{19}=&(F^1{}_{15}-F^{15}{}_1)-(F^2{}_{14}-F^{14}{}_2)+(F^3{}_{16}-F^{16}{}_3)-(F^4{}_{13}-F^{13}{}_4)-\\&(F^5{}_{12}-F^{12}{}_5)-(F^6{}_{10}-F^{10}{}_6)+(F^7{}_{9}-F^{9}{}_7)+(F^8{}_{11}-F^{11}{}_8),
\\
\tilde{E}_{20}=&-(F^1{}_{3}-F^{3}{}_1)-(F^2{}_{5}-F^{5}{}_2)-(F^4{}_{6}-F^{6}{}_4)+(F^7{}_{8}-F^{8}{}_7)-\\&(F^9{}_{11}-F^{11}{}_9)-(F^{10}{}_{13}-F^{13}{}_{10})-(F^{12}{}_{14}-F^{14}{}_{12})+(F^{15}{}_{16}-F^{16}{}_{15}),
\\
\tilde{E}_{21}=&(F^1{}_{5}-F^{5}{}_1)-(F^2{}_{3}-F^{3}{}_2)-(F^4{}_{7}-F^{7}{}_4)-(F^6{}_{8}-F^{8}{}_6)+\\&(F^9{}_{13}-F^{13}{}_9)-(F^{10}{}_{11}-F^{11}{}_{10})-(F^{12}{}_{15}-F^{15}{}_{12})-(F^{14}{}_{16}-F^{16}{}_{14}),
\endaligned
\]
\[
\aligned
\tilde{E}_{22}=&-(F^1{}_{2}-F^{2}{}_1)+(F^3{}_{5}-F^{5}{}_3)-(F^4{}_{8}-F^{8}{}_4)+(F^6{}_{7}-F^{7}{}_6)-\\&(F^9{}_{10}-F^{10}{}_9)+(F^{11}{}_{13}-F^{13}{}_{11})-(F^{12}{}_{16}-F^{16}{}_{12})+(F^{14}{}_{15}-F^{15}{}_{14}),
\\
\tilde{E}_{23}=&(F^1{}_{16}-F^{16}{}_1)-(F^2{}_{12}-F^{12}{}_2)-(F^3{}_{15}-F^{15}{}_3)-(F^4{}_{10}-F^{10}{}_4)+\\&(F^5{}_{14}-F^{14}{}_5)+(F^6{}_{13}-F^{13}{}_6)-(F^7{}_{11}-F^{11}{}_7)+(F^8{}_{9}-F^{9}{}_8),
\\
\tilde{E}_{24}=&(F^1{}_{12}-F^{12}{}_1)+(F^2{}_{16}-F^{16}{}_2)+(F^3{}_{14}-F^{14}{}_3)+(F^4{}_{9}-F^{9}{}_4)+\\&(F^5{}_{15}-F^{15}{}_5)+(F^6{}_{11}-F^{11}{}_6)+(F^7{}_{13}-F^{13}{}_7)+(F^8{}_{10}-F^{10}{}_8),
\endaligned
\]
\[
\aligned
\tilde{E}_{25}=&-(F^1{}_{14}-F^{14}{}_1)-(F^2{}_{15}-F^{15}{}_2)+(F^3{}_{12}-F^{12}{}_3)+(F^4{}_{11}-F^{11}{}_4)+\\&(F^5{}_{16}-F^{16}{}_5)-(F^6{}_{9}-F^{9}{}_6)-(F^7{}_{10}-F^{10}{}_7)+(F^8{}_{13}-F^{13}{}_8),
\\
\tilde{E}_{26}=&-(F^1{}_{2}-F^{2}{}_1)+(F^3{}_{5}-F^{5}{}_3)+(F^4{}_{8}-F^{8}{}_4)-(F^6{}_{7}-F^{7}{}_6)-\\&(F^9{}_{10}-F^{10}{}_9)+(F^{11}{}_{13}-F^{13}{}_{11})+(F^{12}{}_{16}-F^{16}{}_{12})-(F^{14}{}_{15}-F^{15}{}_{14}),
\\
\tilde{E}_{27}=&-(F^1{}_{5}-F^{5}{}_1)+(F^2{}_{3}-F^{3}{}_2)-(F^4{}_{7}-F^{7}{}_4)-(F^6{}_{8}-F^{8}{}_6)-\\&(F^9{}_{13}-F^{13}{}_9)+(F^{10}{}_{11}-F^{11}{}_{10})-(F^{12}{}_{15}-F^{15}{}_{12})-(F^{14}{}_{16}-F^{16}{}_{14}),
\\
\tilde{E}_{28}=&-(F^1{}_{3}-F^{3}{}_1)-(F^2{}_{5}-F^{5}{}_2)+(F^4{}_{6}-F^{6}{}_4)-(F^7{}_{8}-F^{8}{}_7)-\\&(F^9{}_{11}-F^{11}{}_9)-(F^{10}{}_{13}-F^{13}{}_{10})+(F^{12}{}_{14}-F^{14}{}_{12})-(F^{15}{}_{16}-F^{16}{}_{15}),
\endaligned
\]
with $\tilde{E}_{29}=J$ as in \eqref{jot}, 
and $\tilde{E}_{30}=\mathrm{id}_{16\times 16}$. 

 $$\begin{aligned}
   &\lambda_1=\der\Big(x^8-x^7-\tfrac14\big((x^1)^2+(x^2)^2+(x^3)^2+(x^4)^2\big)\Big)+\tfrac12\big(x^2\der x^1-x^1\der x^2+x^4\der x^3-x^3\der x^4\big)\\
   &\lambda_2=\der\Big(x^8-x^5-\tfrac14\big((x^1)^2+(x^2)^2+(x^3)^2+(x^4)^2\big)\Big)+\tfrac12\big(x^3\der x^1-x^1\der x^3+x^2\der x^4-x^4\der x^2\big)\\
    &\lambda_3=\der\Big(x^8-x^6-\tfrac14\big((x^1)^2+(x^2)^2+(x^3)^2+(x^4)^2\big)\Big)+\tfrac12\big(x^4\der x^1-x^1\der x^4+x^3\der x^2-x^2\der x^3\big)\\
        \end{aligned}$$

$$\begin{aligned}
   &\lambda_1=\der\big(x^8-x^7+x^6+x^5\big)+x^2\der x^1-x^1\der x^2+x^4\der x^3-x^3\der x^4,\\
   &\lambda_2=\der\big(x^8+x^7+x^6-x^5\big)+x^3\der x^1-x^1\der x^3+x^2\der x^4-x^4\der x^2,\\
    &\lambda_3=\der\big(x^8+x^7-x^6+x^5\big)+x^4\der x^1-x^1\der x^4+x^3\der x^2-x^2\der x^3.\\
        \end{aligned}$$

\section{Appendix~C}
Here we present the explicit formulae for the CR symmetry generators of the $\sog(\ell+1,\ell -1)$ homogenous CR manifolds described in Theorem \ref{so}. The notation is as in this theorem.
Thus, in coordinates:
\[
z
\,=\,
(z^1,\dots,z^{\ell-1})
\ \ \ \ \ \ \ \ \ \ \ \ \ \ \ \ \ \ \ \
\text{and}
\ \ \ \ \ \ \ \ \ \ \ \ \ \ \ \ \ \ \ \
w
\,=\,
\big(
w^{ij}
\big)_{1\leqslant i<j\leqslant\ell-1},
\]
we have the CR submanifold $M^{N(\ell)} \subset
\C^{\ell(\ell-1)/2}$ of
Subsection~{\ref{subsection-SO-ell-1-ell-1}} defined by
\[
w^{ij}
-
\overline{w}^{ij}
\,=\,
z^i\,\overline{z}^j
-
\overline{z}^i\,
z^j
\eqno
{\scriptstyle{(1\,\leqslant\,i\,<\,j\,\leqslant\,\ell-1)}},
\]
with
\[
{\rm CRdim}\,M
\,=\,
\ell-1
\ \ \ \ \ \ \ \ \ \ \ \ \ \ \ \ \ \ \ \
\text{and}
\ \ \ \ \ \ \ \ \ \ \ \ \ \ \ \ \ \ \ \
{\rm codim}\,M
\,=\,
\tfrac{(\ell-1)(\ell-2)}{2}.
\]

As is known, when the CR structure is embedded,
the real infinitesimal CR symmetries $Y$, as defined in {\eqref{sycr}}, are determined by the holomorphic vector fields:
\[
Y
\,=\,
\sum_{i=1}^{\ell-1}\,
Z_i(z,w)\,
\partial_{z^i}
+
\sum_{j=1}^{\ell-1}\,
\sum_{k=1}^{\ell-1}\,
W_{jk}(z,w)\,
\partial_{w^{jk}},
\]
whose (double) real part $Y+\overline{Y}$ 
is {\em tangent} to the (extrinsic) CR manifold:
\[
\mathfrak{hol}(M)
\,:=\,
\big\{
Y\,\colon\,\,
Y+\overline{Y}\,\,
\text{is tangent to}\,\,
M
\big\}.
\]
Defining symmetries via this requirement is equivalent to the definition \eqref{sycr}. In particular, $\mathfrak{hol}(M)$ is a \emph{real} Lie algebra isomorphic to the Lie algebra of symmetries $\mathfrak{g}_J$ as defined in \eqref{sycr}.

Below, to save space,
only the holomorphic part of the symmetry is written; to get the real symmetry $Y$ one has to add the term `$+\overline{Y}$', in the formulae below.

Attributing the weights:
\[
[z^i]:=1=:[\bar{z}^i],
\ \ \ \ \
\big[\partial_{z^i}\big]
:=-1=:
\big[\partial_{\bar{z}^i}\big]
\ \ \ \ \ 
[w^{jk}]:=2=:[\bar{w}^{jk}],
\ \ \ \ \ 
\big[\partial_{w^{jk}}\big]
:=-2=:
\big[\partial_{\bar{w}^{jk}}\big]
\]
there is a grading:
\[
\mathfrak{hol}(M)
\,=\,
\mathfrak{g}_{-2}
\oplus
\mathfrak{g}_{-1}
\oplus
\mathfrak{g}_{0}
\oplus
\mathfrak{g}_{1}
\oplus
\mathfrak{g}_{2}
\,=\,
\SO(\ell-1,\ell+1)
\ \ \ \ \ \ \ \ \ 
\text{with}
\ \ \ \ \ \ \ \ \
\dim\,\mathfrak{g}_{-\lambda}
\,=\,
\dim\,\mathfrak{g}_{\lambda},
\]
where
\[
\mathfrak{g}_\lambda
\,=\,
\big\{
Y\in\mathfrak{hol}(M)
\colon\,\,
[Y]=\lambda
\big\}.
\]
Dimensions are:
\begin{center}
\def\arraystretch{1.5}
\begin{tabular}{c|c|c|c|c}
$\mathfrak{g}_{-2}$ & $\mathfrak{g}_{-1}$ & $\mathfrak{g}_0$ & 
$\mathfrak{g}_1$ & $\mathfrak{g}_2$
\\
\hline
$\frac{(\ell-1)(\ell-2)}{2}$ & $2\,(\ell-1)$ & $(\ell-1)^2+1$ & 
$2\,(\ell-1)$ & $\frac{(\ell-1)(\ell-2)}{2}$
\end{tabular}
\end{center}

The $\frac{(\ell-1)(\ell-2)}{2}$ generators of $\mathfrak{g}_{-2}$ are:
\[
Y_{w^{ij}}^{-2}
\,:=\,
\partial_{w^{ij}}
\eqno
{\scriptstyle{(1\,\leqslant\,i\,<\,j\,\leqslant\,\ell-1)}}.
\]
The $(\ell-1) + (\ell-1)$ generators of $\mathfrak{g}_{-1}$ are:
\[
\aligned
Y_{z^i}^{-1}
&
\,:=\,
\partial_{z^i}
+
\sum_{1\leqslant k<i}\,
z^k\,\partial_{w^{ki}}
-
\sum_{i<k\leqslant\ell-1}\,
z^k\,
\partial_{w^{ik}}
&
\ \ \ \ \ \ \ \ \ \ \ \
&
{\scriptstyle{(1\,\leqslant\,i\,\leqslant\,\ell-1)}},
\\
IY_{z^i}^{-1}
&
\,:=\,
{\scriptstyle{\sqrt{-1}}}\,\partial_{z^i}
-
\sum_{1\leqslant k<i}\,
{\scriptstyle{\sqrt{-1}}}\,z^k\,\partial_{w^{ki}}
+
\sum_{i<k\leqslant\ell-1}\,
{\scriptstyle{\sqrt{-1}}}\,z^k\,
\partial_{w^{ik}}
&
\ \ \ \ \ \ \ \ \ \ \ \
&
{\scriptstyle{(1\,\leqslant\,i\,\leqslant\,\ell-1)}},
\endaligned
\]
The $\frac{(\ell-1)(\ell-2)}{2} + (\ell-1) + 
\frac{(\ell-1)(\ell-2)}{2} + 1$ 
generators of $\mathfrak{g}_0$ are:
\[
\aligned
Y_{ij}^0
&
\,:=\,
z^i\,\partial_{z^j}
+
\sum_{1\leqslant k<i}\,
w^{ki}\,
\partial_{w^{kj}}
-
\sum_{i<k<j}\,
w^{ik}\,
\partial_{w^{kj}}
+
\sum_{j<k\leqslant\ell-1}\,
w^{ik}\,
\partial_{w^{jk}}
&
\ \ \
&
{\scriptstyle{(1\,\leqslant\,i\,<\,j\,\leqslant\,\ell-1)}},
\\
Y_{ii}^0
&
\,:=\,
z^i\,\partial_{z^i}
+ 
\sum_{1\leqslant k<i}\,
w^{ki}\,
\partial_{w^{ki}}
+
\sum_{i<k\leqslant\ell-1}\,
w^{ik}\,
\partial_{w^{ik}}
&
\ \ \
&
{\scriptstyle{(1\,\leqslant\,i\,\leqslant\,\ell-1)}},
\\
Y_{ij}^0
&
\,:=\,
z^i\,\partial_{z^j}
+
\sum_{1\leqslant k<j}\,
w^{ki}\,
\partial_{w^{kj}}
-
\sum_{j<k<i}\,
w^{ki}\,
\partial_{w^{jk}}
+
\sum_{i<k\leqslant\ell-1}\,
w^{ik}\,
\partial_{w^{jk}},
&
\ \ \
&
{\scriptstyle{(1\,\leqslant\,j\,<\,i\,\leqslant\,\ell-1)}},
\endaligned
\]
together with the rotation:
\[
R
\,:=\,
{\scriptstyle{\sqrt{-1}}}\,z^1\,\partial_{z^1}
+\cdots+
{\scriptstyle{\sqrt{-1}}}\,z^{\ell-1}\,\partial_{z^{\ell-1}}.
\]
The sum of the $Y_{ii}^0$ equals the dilation:
\[
D
\,:=\,
\sum_{1\leqslant k\leqslant\ell-1}\,
z^k\,\partial_{z^k}
+
\sum_{1\leqslant k<m\leqslant\ell-1}\,
w^{km}\,
\partial_{w^{km}}.
\]
The $(\ell-1) + (\ell-1)$ generators of $\mathfrak{g}_1$ are:
\[
\footnotesize
\aligned
Y_{z^iz^i}^1
&
\,:=\,
\sum_{1\leqslant k<i}\,
\big(z^i\,z^k-w^{ki}\big)\,
\partial_{z^k}
+
z^i\,z^i\,\partial_{z^i}
+
\sum_{i<k\leqslant\ell-1}\,
\big(
z^i\,z^k
+
w^{ik}
\big)\,
\partial_{z^k}
\\
&
\ \ \ \ \ 
+
\sum_{1\leqslant k<i}\,
z^i\,w^{ki}\,\partial_{w^{ki}}
+
\sum_{i<m\leqslant\ell-1}\,
z^i\,w^{im}\,\partial_{w^{im}}
\\
&
\ \ \ \ \
+
\sum_{1\leqslant k<m<i}\,
\left\vert\!
\begin{smallmatrix}
z^k & -w^{ki}
\\
z^m & -w^{mi}
\end{smallmatrix}
\!\right\vert\,
\partial_{w^{km}}
+
\sum_{1\leqslant k<i
\atop
i<m\leqslant\ell-1}\,
\left\vert\!
\begin{smallmatrix}
z^k & -w^{ki}
\\
z^m & w^{im}
\end{smallmatrix}
\!\right\vert\,
\partial_{w^{km}}
\\
&
\ \ \ \ \
+
\sum_{i<k<m\leqslant\ell-1}\,
\left\vert\!
\begin{smallmatrix}
z^k & w^{ik}
\\
z^m & w^{im}
\end{smallmatrix}
\!\right\vert\,
\partial_{w^{km}},
\\
IY_{z^iz^i}^1
&
\,:=\,
\sum_{1\leqslant k<i}\,
\big({\scriptstyle{\sqrt{-1}}}\,z^i\,z^k+{\scriptstyle{\sqrt{-1}}}\,w^{ki}\big)\,
\partial_{z^k}
+
{\scriptstyle{\sqrt{-1}}}\,z^i\,z^i\,\partial_{z^i}
+
\sum_{i<k\leqslant\ell-1}\,
\big(
{\scriptstyle{\sqrt{-1}}}\,z^i\,z^k
-
{\scriptstyle{\sqrt{-1}}}\,w^{ik}
\big)\,
\partial_{z^k}
\\
&
\ \ \ \ \ 
+
\sum_{1\leqslant k<i}\,
{\scriptstyle{\sqrt{-1}}}\,z^i\,w^{ki}\,\partial_{w^{ki}}
+
\sum_{i<m\leqslant\ell-1}\,
{\scriptstyle{\sqrt{-1}}}\,z^i\,w^{im}\,\partial_{w^{im}}
+
\sum_{1\leqslant k<m<i}\,
{\scriptstyle{\sqrt{-1}}}\,\left\vert\!
\begin{smallmatrix}
z^k & -w^{ki}
\\
z^m & -w^{mi}
\end{smallmatrix}
\!\right\vert\,
\partial_{w^{km}}
\\
&
\ \ \ \ \
+
\sum_{1\leqslant k<i
\atop
i<m\leqslant\ell-1}\,
{\scriptstyle{\sqrt{-1}}}\,\left\vert\!
\begin{smallmatrix}
z^k & -w^{ki}
\\
z^m & w^{im}
\end{smallmatrix}
\!\right\vert\,
\partial_{w^{km}}
+
\sum_{i<k<m\leqslant\ell-1}\,
{\scriptstyle{\sqrt{-1}}}\,\left\vert\!
\begin{smallmatrix}
z^k & w^{ik}
\\
z^m & w^{im}
\end{smallmatrix}
\!\right\vert\,
\partial_{w^{km}}.
\endaligned
\]
The $\frac{(\ell-1)(\ell-2)}{2}$ generators of $\mathfrak{g}_2$ are, 
with $i < j$:
\[
\footnotesize
\aligned
Y_{w^{ij}w^{ij}}^2
&
\,:=\,
\sum_{1\leqslant k<i}\,
\left\vert\!
\begin{smallmatrix}
z^i & w^{ki}
\\
z^j & w^{kj}
\end{smallmatrix}
\!\right\vert\,
\partial_{z^k}
+
z^i\,w^{ij}\,\partial_{z^i}
+
\sum_{i<k<j}\,
\left\vert\!
\begin{smallmatrix}
z^i & -w^{ik}
\\
z^j & w^{kj}
\end{smallmatrix}
\!\right\vert\,
\partial_{z^k}
+
z^j\,w^{ij}\,\partial_{z^j}
+
\sum_{j<k\leqslant\ell-1}\,
\left\vert\!
\begin{smallmatrix}
z^i & -w^{ik}
\\
z^j & -w^{jk}
\end{smallmatrix}
\!\right\vert\,
\partial_{z^k}
\\
&
\ \ \ \ \
+
\sum_{1\leqslant k<i}\,
w^{ij}\,w^{ki}\,
\partial_{w^{ki}}
+
\sum_{1\leqslant k<i}\,
w^{ij}\,w^{kj}\,
\partial_{w^{kj}}
+
\sum_{i<m<j}\,
w^{ij}\,w^{im}\,
\partial_{w^{im}}
+
w^{ij}\,w^{ij}\,
\partial_{w^{ij}}
\\
&
\ \ \ \ \
+
\sum_{j<m\leqslant\ell-1}\,
w^{ij}\,w^{im}\,
\partial_{w^{im}}
+
\sum_{i<k<j}\,
w^{ij}\,w^{kj}\,
\partial_{w^{kj}}
+
\sum_{j<m\leqslant\ell-1}\,
w^{ij}\,w^{jm}\,
\partial_{w^{jm}}
\\
&
\ \ \ \ \
+
\sum_{1\leqslant k<m<i}\,
\left\vert\!
\begin{smallmatrix}
-w^{ki} & -w^{kj}
\\
-w^{mi} & -w^{mj}
\end{smallmatrix}
\!\right\vert\,
\partial_{w^{km}}
+
\sum_{1\leqslant k<i
\atop
i<m<j}\,
\left\vert\!
\begin{smallmatrix}
-w^{ki} & -w^{kj}
\\
w^{im} & -w^{mj}
\end{smallmatrix}
\!\right\vert\,
\partial_{w^{km}}
+
\sum_{1\leqslant k<i
\atop
j<m\leqslant\ell-1}\,
\left\vert\!
\begin{smallmatrix}
-w^{ki} & -w^{kj}
\\
w^{im} & w^{jm}
\end{smallmatrix}
\!\right\vert\,
\partial_{w^{km}}
\\
&
\ \ \ \ \ \ \ \ \ \ \ \ \ \ \ \ \ \ \ \ \ \ \ \ \ \ \ \ \ \ \ \ \ \ \
\ \ \ \ \ \ \ \ \ \ \ \ \ \
+
\sum_{i<k<m<j}\,
\left\vert\!
\begin{smallmatrix}
w^{ik} & -w^{kj}
\\
w^{im} & -w^{mj}
\end{smallmatrix}
\!\right\vert\,
\partial_{w^{km}}
+
\sum_{i<k<j
\atop
j<m\leqslant\ell-1}\,
\left\vert\!
\begin{smallmatrix}
w^{ik} & -w^{kj}
\\
w^{im} & w^{jm}
\end{smallmatrix}
\!\right\vert\,
\partial_{w^{km}}
\\
&
\ \ \ \ \ \ \ \ \ \ \ \ \ \ \ \ \ \ \ \ \ \ \ \ \ \ \ \ \ \ \ \ \ \ \
\ \ \ \ \ \ \ \ \ \ \ \ \ \ \ \ \ \ \ \ \ \ \ \ \ \ \ \ \ \ \ \ \ \ \
\ \ \ \ \ \ \ \ \ \ \ \ \ \ \ \ 
+
\sum_{j<k<m\leqslant\ell-1}\,
\left\vert\!
\begin{smallmatrix}
w^{ik} & w^{jk}
\\
w^{im} & w^{jm}
\end{smallmatrix}
\!\right\vert\,
\partial_{w^{km}}.
\endaligned
\]

\begin{bibdiv}
  \begin{biblist}

    \bib{andreotti}{book}{
      author = {Andreotti, Aldo},
      author={Hill, Denson C.},
     title = {Complex characteristic coordinates and tangential {Cauchy-Riemann} equations},
     journal = {Annali della Scuola Normale Superiore di Pisa - Classe di Scienze},
     pages = {299-324},
     publisher = {Scuola normale superiore},
     volume = {Ser. 3, 26},
     number = {2},
     year = {1972},
     zbl = {0256.32006},
     mrnumber = {460724},
     url = {http://www.numdam.org/item/ASNSP_1972_3_26_2_299_0/}
}

\bib{CartanPhd}{article}{
   author={Cartan, \'Elie},
   title={\"Uber die einfachen Transformationsgruppen},
journal={Ber. Verh. k. Sachs. Ges. d. Wiss. Leipzig}
 date={1893},
 pages={395--420},
  }
    
\bib{CS}{book}{
   author={\v{C}ap, Andreas},
   author={Slov\'{a}k, Jan},
   title={Parabolic geometries. I},
   series={Mathematical Surveys and Monographs},
   volume={154},
   note={Background and general theory},
   publisher={American Mathematical Society, Providence, RI},
   date={2009},
   pages={x+628},
   isbn={978-0-8218-2681-2},
   review={\MR{2532439}},
   doi={10.1090/surv/154},
}

%\bib{denny-gives}{article}{ }
\bib{He}{article}{
   author={Helgason, Sigurdur},
   title={Invariant differential equations on homogeneous manifolds},
   journal={BAMS},
   volume={83},
   date={1977},
   pages={751-756},
   }
\bib{HMNN}{article}{
  author={Hill, C. Denson}
  author={Merker, Jo\"el}
  author={Nie, Zhaohu}
  author={Nurowski, Pawel}
  title={Infinitesimal CR symmetries of accidental CR structures}
  pages={to be published}
    }

%\bib{He}{book}{
%   author={Helgason, Sigurdur},
%   title={Differential geometry, Lie groups, and symmetric spaces},
%   series={Pure and Applied Mathematics},
%   volume={80},
%   publisher={Academic Press, Inc. [Harcourt Brace Jovanovich, Publishers],
%   New York-London},
%   date={1978},
%   pages={xv+628},
%   isbn={0-12-338460-5},
%   review={\MR{514561}},
%}

\bib{MN1}{article}{
   author={Medori, Costantino},
   author={Nacinovich, Mauro},
   title={Levi-Tanaka algebras and homogeneous CR manifolds},
   journal={Compositio Math.},
   volume={109},
   date={1997},
   number={2},
   pages={195--250},
   issn={0010-437X},
   %review={\MR{1478818}},
   %doi={10.1023/A:1000166119593},
}

\bib{medori}{article}{
author={Medori, Costantino}
author={Nacinovich, Mauro}
title={Classification of semisimple Levi-Tanaka algebras}
journal={Annali di Matematica Pura ed Applicata}
pages={285-349}
publisher={Springer}
volume = {Vol. CLXXIV},
     number = {IV},
     year = {1998},
%     language={eng}
     url={https://doi.org/10.1007/BF01759376}
}

\bib{nurowskiG2CR}{article}{
author={Nurowski, Pawel}
title={to be published}
}

\bib{tanaka}{article}{
   author={Tanaka, Noboru},
   title={On differential systems, graded Lie algebras and pseudogroups},
journal={Journal of Mathematics of Kyoto University},
pages={1-82}
volume = {10},
     year = {1970},
}

\bib{Yam}{article}{
   author={Yamaguchi, Keizo},
   title={Differential systems associated with simple graded Lie algebras},
   conference={
      title={Progress in differential geometry},
   },
   book={
      series={Adv. Stud. Pure Math.},
      volume={22},
      publisher={Math. Soc. Japan, Tokyo},
   },
   date={1993},
   pages={413--494},
   review={\MR{1274961}},
   doi={10.2969/aspm/02210413},
}

\end{biblist}
\end{bibdiv}

\bigskip
\end{document}